\DeclareMathOperator{\ham}{Ham}
\DeclareMathOperator{\ske}{skew}
\DeclareMathOperator{\en}{\mathrm{End}}
\DeclarePairedDelimiter\pg{[}{]}
\newcommand{\rest}[2]{\left.#1\right|_{#2}}
\newcommand{\dif}{\mathrm{d}}
\newcommand{\spann}{\mathrm{span}}
\newcommand{\pr}{\mathrm{pr}}
\newcommand{\kil}{\mathrm{Kill}}
\newcommand{\Aff}{\mathrm{Aff}}
\newcommand{\aff}{\mathrm{aff}}
\newcommand{\der}{\mathrm{Der}}
\newcommand{\gder}{\mathrm{gDer}}
\newcommand{\lcn}[1]{\prescript{#1}{}{\nabla}}
\newcommand{\cyc}{\mathrm{cyclic}}
\newcommand\varlist {,\makebox[1em][c]{.\hfil.\hfil.},}
\newcommand{\nb}{_{\scalebox{0.45}{$\nabla$}}}
\newcommand{\s}[1]{\scalebox{0.5}{#1}}
\DeclareMathOperator{\Img}{Im}
\newcommand{\N}{{\mathbb{N}}}
\newcommand{\R}{{\mathbb{R}}}
\newcommand{\cC}{\mathcal{C}}
\newcommand{\cCi}{\cC^\infty}
\newcommand{\la}{\langle}
\newcommand{\ra}{\rangle}
\newcommand{\GL}{\mathrm{GL}}
\DeclareMathOperator{\grad}{grad}
\DeclareMathOperator{\id}{id}
\DeclareMathOperator{\Der}{Der}
\DeclareMathOperator{\Sym}{Sym}
\DeclareMathOperator{\sym}{sym}
\newtheorem{theorem}{Theorem}[section]
\newtheorem{corollary}[theorem]{Corollary}
\newtheorem{lemma}[theorem]{Lemma}
\newtheorem{proposition}[theorem]{Proposition}
\newtheorem{example}[theorem]{Example}
\newenvironment{customthm}[1]
{\innercustomthm}
{\endinnercustomthm}
\newenvironment{customprop}[1]
{\innercustomprop}
{\endinnercustomprop}
\theoremstyle{definition}
\newtheorem{definition}[theorem]{Definition}
\theoremstyle{remark}
\newtheorem{remark}[theorem]{Remark}
\renewcommand{\pounds}{L}
\title[]{Symmetric Cartan calculus,\\ the Patterson-Walker metric\\ and Killing vector fields}
\author[F. Moučka]{Filip Moučka}
\author[R. Rubio]{Roberto Rubio}
\address{Universitat Aut\`onoma de Barcelona, 08193 Barcelona, Spain;\newline \indent Faculty of Nuclear Sciences and Physical Engineering, Czech Technical\newline\indent University in Prague, 115 19 Prague 1, Czech Republic}
\email{filip.moucka@autonoma.cat; mouckfil@cvut.cz}
\address{Universitat Aut\`onoma de Barcelona, 08193 Barcelona, Spain}
\email{roberto.rubio@uab.es}
\thanks{This project has been supported by MICIU/AEI/10.13039/501100011033 and the EU FEDER under the grants PID2022-137667NA-I00 (GENTLE) and CNS2024-154695 (DÉCOLLAGE). The first author has been partially supported by the Grant Agency of the Czech Technical University in Prague, grant No. SGS25/163/OHK4/3T/14. The second author has also received support from the 
MICIU/AEI and the EU FSE under the Ramón y Cajal fellowship RYC2020-030114-I and from the AGAUR under the grant 2021-SGR-01015. }
\begin{document}

\begin{abstract} We develop symmetric Cartan calculus, an analogue of classical Cartan calculus for symmetric differential forms. We first show that the analogue of the exterior derivative, the symmetric derivative, is not unique and its different choices are parametrized by torsion-free affine connections. We use a choice of symmetric derivative to generate the symmetric Lie derivative and the symmetric bracket, and give geometric interpretations of all of them. By proving the structural identities and describing the role of affine morphisms, we reveal an unexpected link of symmetric Cartan calculus with the Patterson-Walker metric, which we recast as a direct analogue of the canonical symplectic form on the cotangent bundle. We show that, in the light of the Patterson-Walker metric, symmetric Cartan calculus becomes a complete analogue of classical Cartan calculus. In this analogy, its Killing vector fields play a central role. 
\end{abstract}

\maketitle


\setcounter{tocdepth}{1}
\tableofcontents

\section{Introduction}

Classical Cartan calculus, understood as the calculus of vector fields $\mathfrak{X}(M)$ and differential forms $\Omega^\bullet(M)$ on a manifold $M$, is an ubiquitous language for differential geometry. Its structural identities are the graded commutators of the contraction operator $\iota$, the exterior derivative $d$ and the Lie derivative $L$. Namely, for $X,Y\in\mathfrak{X}(M)$, we have:
\begin{equation}\label{eq:Cartan-calculus}
 \begin{aligned}
 [\iota_X,\iota_Y]_\text{g}&=0, & & & & &[\dif,\dif]_\text{g}&=0, & & & & &[\iota_X,\dif]_\text{g}&=L_X,\\
 [L_X,\iota_Y]_\text{g}&=\iota_{[X,Y]}, & & & & & [L_X,L_Y]_\text{g}&=L_{[X,Y]}, & & & & & [\dif, L_X]_\text{g}&=0.
\end{aligned}
\end{equation}


The exterior derivative plays a geometric generating role. Together with the contraction, which is a linear-algebraic operation (that is, at the level of bundles, not necessarily sections), it recovers the Lie derivative and also the Lie bracket as a derived bracket. 

A very natural question is whether there is an analogous framework for symmetric differential forms and what it might be helpful for. This would first mean finding analogues to the exterior derivative, the Lie derivative and the Lie bracket satisfying similar properties. Surprisingly, the only precedent we are aware of is the definition of a symmetric bracket in \cite{CroGSST} (later geometrically interpreted in \cite{LewACD}), which was used, by mimicking the explicit formulas in the usual setting, to define a symmetric differential and a symmetric Lie derivative \cite{HeydSD}. We have found no justification that these are the only possible or even reasonable choices, no exhaustive study of the structural identities analogous to \eqref{eq:Cartan-calculus} and, beyond the symmetric bracket \cite{LewGISP, Lewbook}, neither a full geometric interpretation nor a significant application.

The aim of the present work is to develop and justify a sound and geometrically meaningful approach to symmetric Cartan calculus, as well as to show its two-way interaction with the Patterson-Walker metric. Key to our strategy is our starting point: we look for all possible symmetric analogues of the exterior derivative that can play an equivalent generating role as a geometric derivation (Definition \ref{def:geo-der}).

The first main feature of symmetric Cartan calculus is, precisely, that there is no canonical symmetric derivative for symmetric differential forms, which we denote by $\Upsilon^\bullet(M)$. Such an analogue depends on the choice of a connection, which can be, without loss of generality, assumed to be torsion free. In fact, we prove: 

\begin{customprop}{\ref{prop:corr-torsion-free-geo-der}}
 The map $\nabla\mapsto \nabla^s$ gives an isomorphism of $\Upsilon^2(M,TM)$-affine spaces:
 \begin{equation*}
\left\{\begin{array}{c}
 \text{torsion-free}\\
 \text{connections on } M
 \end{array}\right\}\overset{\sim}{\longleftrightarrow}\left\{\begin{array}{c}
 \text{geometric derivations of the algebra}\\
 \text{of symmetric forms on }M 
 \end{array}\right\}.
\end{equation*}
\end{customprop}

An a priori surprising fact is that $\nabla^s$ does not square to zero. However, this is natural, since the symmetric derivative is a usual derivation and derivations are an algebra for the usual commutator, as opposed to the graded commutator for graded derivations. Thus, squaring to zero is natural in the context of classical Cartan calculus since $[\dif,\dif]_g=2\,\dif\circ \dif$, whereas in the symmetric case, we have that the analogous expression of squaring to zero is $[\nabla^s,\nabla^s]=0$ for the usual commutator, which is trivially satisfied.

We give a geometric interpretation of $\nabla^s$ making use of the correspondence between a symmetric form $\varphi$ and a polynomial in velocities $\tilde{\varphi}$ (an element of a subspace of $\cCi(TM)$, as we explain in Section \ref{sec:geo-int-sym-der}), together with the geodesic spray $X_\nabla\in\mathfrak{X}(TM)$.

\begin{customprop}{\ref{prop: nablas polynomials}}
 Let $\nabla$ be a connection on $M$. For every $\varphi\in\Upsilon^r(M)$ we have
 \begin{equation*}
 \widetilde{\nabla^s\varphi}=X_\nabla\tilde{\varphi}.
 \end{equation*}
\end{customprop}

Closed symmetric forms for the symmetric derivative $\nabla^s$ recover the notion of Killing tensors. In the case of the Levi-Civita connection of a (pseudo-)Riemannian metric, Killing tensors are well established and naturally generalize Killing vector fields. They are related to the the symmetries of the Laplacian \cite{EasHSL}, or separability of Hamilton-Jacobi equation \cite{BenSRM}. Probably the most famous non-trivial Killing tensor is the \textit{Carter tensor} in the \textit{Kerr-Newman spacetime}, the spacetime of a charged rotating black hole \cite{CarA,CarB, WalOQ}. 

The symmetric Lie derivative with respect to a vector field $X\in\mathfrak{X}(M)$ is introduced by $\pounds_X^s=[\iota_X,\nabla^s]$ and we show in Proposition \ref{symLflow} (see also Figure \ref{UpsT}) that it captures the variation with respect to a combination of the flow of $X$ and the parallel transport with respect to $\nabla$, which we call the parallel flow (Definition \ref{def: par-flow}). Finally, the symmetric bracket is derived from the formula $[L^s_X,\iota_Y]=\iota_{[ X,Y]_s}$. Its geometric interpretation in relation to geodesically invariant distributions is recalled in Section \ref{sec:geo-int-sym-bracket}. Overall, we have the following properties for the (ungraded) commutator:
\begin{align*}
 [\iota_X,\iota_Y]&=0,& [\nabla^s,\nabla^s]&=0,&
 [\iota_X,\nabla^s]&=L^s_X,\\
 [L^s_X,\iota_Y]&=\iota_{[ X,Y]_s}, & [L^s_X,L^s_Y]&\neq L^s_{[X,Y]_s} & [\nabla^s, L^s_X]&\neq 0.
\end{align*}
Note that the analogues of the last two formulae of \eqref{eq:Cartan-calculus} are not true. Instead, we show in Section \ref{sec:commutation-relations} that
\begin{align*}
[L^s_X, L^s_Y]&=L^s_{[X,Y]}+\iota^s_{2(\nabla_{\nabla X}Y-\nabla_{\nabla Y}X-R_\nabla (X,Y))}, \\
[\nabla^s,L^s_X]&=2\nabla^s_{\nabla X}+\iota^s_{2\sym \iota_XR_\nabla +R^s_\nabla X}.
\end{align*}

To better understand the complicated terms on the right-hand sides, we recall in Section \ref{sec:affine-manifold-morphism} the Lie group of affine diffeomorphisms of a manifold with connection $\Aff(M,\nabla)$, the Lie algebra of affine vector fields $\aff(M,\nabla)$. We show that the abelian Lie subalgebra of parallel vector fields, $\aff_0(M,\nabla)\leq \aff(M,\nabla)$, can be characterized as those affine vector fields whose usual flow equals the parallel flow (Proposition \ref{prop: parallel-flow}). This leads us to identify the necessary and sufficient condition under which commutation relations of symmetric Cartan calculus simplify to a form that is completely analogous to classical Cartan calculus as in \eqref{eq:Cartan-calculus}. 

\begin{customthm}{\ref{thm: commutation-relations-sym-Cartan}}
 Let $\nabla$ be a torsion-free connection on $M$ and $X,Y\in\mathfrak{X}(M)$. All of the six relations
 \begin{align*}
 [\iota_X,\iota_Y]&=0,& [\nabla^s,\nabla^s]&=0,&
 [\iota_X,\nabla^s]&=L^s_X,\\
 [L^s_X,\iota_Y]&=\iota_{[ X,Y]_s}, & [L^s_X,L^s_Y]&=L^s_{[X,Y]_s}, & [\nabla^s, L^s_X]&=0
\end{align*}
are satisfied if and only if $X\in\aff_0(M,\nabla)$.
\end{customthm}


A further analysis of symmetric Cartan calculus reveals an unexpected and deep link with the Patterson-Walker metric $g\nb$ \cite{PatRE}, a metric on $T^*M$ that is determined by the choice of a torsion-free connection $\nabla$ on $M$. Using a lift $\hat{\nabla}$ of the connection $\nabla$ to $T^*M$ (Section \ref{sec:reformulation-sym-Cartan-calculus}), the symmetric derivative allows us to recast the \mbox{Patterson-Walker} metric as an analogue of the canonical symplectic form on~$T^*M$.
\begin{customthm}{\ref{thm: PW}}
 For any torsion-free connection $\nabla$ on $M$, there holds
 \begin{equation*}
\hat{\nabla}^s\alpha_\emph{can}=g\nb,
 \end{equation*}
 where $\alpha_\emph{can}$ is the canonical $1$-form on $T^*M$.
\end{customthm}

We next study Killing vector fields for the Patterson-Walker metric. These can be fully characterized in terms of tensor fields on the base manifold $M$, which was first shown in \cite{conformal-PW}. We formulate this result in a coordinate-free way and provide an alternative proof. 

\begin{customthm}{\ref{thm: Kill-PW}}
 Let $\nabla$ be a torsion-free connection on $M$. The vector space of Killing vector fields for $g\nb$ admits the natural decomposition:
 \begin{equation*}
 \{\alpha^\emph{v}\,|\,\alpha\in\kil^1(M,\nabla)\}\oplus \{X^c\,|\,X\in\aff(M,\nabla)\}\oplus\left\{\begin{array}{c|c}
 \!\!\pi^h & \begin{array}{c}
 \pi\in\mathfrak{X}^2(M) \text{ such that } \\
 \nabla\pi=0 \text{ and }R^\pi_\nabla=0
 \end{array}
 \end{array}\!\!\!\!\!\right\},
 \end{equation*}
 where $R^\pi_\nabla\in\Upsilon^2(M,\Sym^2TM)$ is given by
 \begin{equation*}
 R^\pi_\nabla(Y,Z)\coloneqq\sym(R\nb(Y,\pi(\,))Z+R\nb(Z,\pi(\,))Y).
 \end{equation*}
\end{customthm}

We extend Theorem \ref{thm: Kill-PW} by characterizing the Lie algebra structure and showing that the Killing vector fields for $g\nb$ form a $3$-graded Lie algebra.

\begin{customthm}{\ref{thm: PW-Lie}}
 Let $\nabla$ be a torsion-free connection on $M$. The decomposition in Theorem \ref{thm: Kill-PW} represents a natural grading of the Lie algebra of Killing vector fields for $g\nb$ concentrated in degrees $-1,0,1$. In particular, for $\alpha,\beta\in\kil^1(M,\nabla)$, $X,Y\in\aff(M,\nabla)$ and $\pi,\rho\in\mathfrak{X}^2(M,\nabla)$, we have
 \begin{align*}
 [\alpha^\emph{v},\beta^\emph{v}]&=0, & [X^c,\alpha^\emph{v}]&=(L_X\alpha)^\emph{v}, & [\alpha^\emph{v},\pi^h]&=\pi(\alpha)^c,\\
 [X^c,Y^c]&=[X,Y]^c, & [X^c,\pi^h]&=(L_X\pi)^h, & [\pi^h,\rho^h]&=0.
 \end{align*}
\end{customthm}


Finally, we use the description of Killing vector fields for the \mbox{Patterson-Walker} metric to prove our last result, which gives a conceptual explanation of the difference between symmetric Cartan calculus and classical Cartan calculus.
\begin{customthm}{\ref{thm: commutation-relations-sym-Cartan-PW}}
 Let $\nabla$ be a torsion-free connection on $M$ and $X,Y\in\mathfrak{X}(M)$. All of the six relations
 \begin{align*}
 [\iota_X,\iota_Y]&=0,& [\nabla^s,\nabla^s]&=0,&
 [\iota_X,\nabla^s]&=L^s_X,\\
 [L^s_X,\iota_Y]&=\iota_{[ X,Y]_s}, & [L^s_X,L^s_Y]&=L^s_{[X,Y]_s}, & [\nabla^s, L^s_X]&=0
\end{align*}
are satisfied if and only if $X^\text{c}$ is a gradient Killing vector field for $g\nb$.
\end{customthm}
This seemingly additional condition on $X$ implicitly appears in the classical case as well. Indeed, the analogues of gradient Killing vector fields for $g\nb$ are Hamiltonian ones for the canonical symplectic form $\omega_\text{can}$, and complete lifts are always Hamiltonian. This draws a complete analogy between Theorem \ref{thm: commutation-relations-sym-Cartan-PW} and~\eqref{eq:Cartan-calculus}.

The study of the lifts required to prove results of Section \ref{sec:Patterson-Walker} is recalled in Appendix \ref{app: lifts}. The development of this theory becomes quite technical, so we have made a special effort to provide clean and coordinate-free arguments.

We finish the paper with a table (Appendix \ref{sec:table}) that summarizes and compares various aspects of symmetric Cartan calculus with classical Cartan calculus.



Further applications of this new framework can be seen in other and future works. To start with, symmetric Cartan calculus plays a fundamental role in the development of symmetric Poisson geometry \cite{SymPoisson} and $C_n$-generalized geometry (to appear). On the other hand, just as the exterior derivative gives rise to de Rham cohomology, the question of whether there is any cohomology theory associated to the symmetric derivative, despite $\nabla^s$ not squaring to zero, arises naturally. The study of this cohomology theory will also be the subject of a forthcoming work.



\vspace{.52cm}


\textbf{Acknowledgements.} We would like to thank the Weizmann Institute of Science for their hospitality in hosting us as visitors, Maciej Dunajski for sharing with us his knowledge about the Patterson-Walker metric, Andrew Lewis for helpful discussions about the symmetric bracket and Rudolf Šmolka for noticing a mistake in the first version of this text. We also thank Arman Taghavi-Chabert and Josef Šilhan for bringing their work \cite{conformal-PW} to our attention.

\vspace{.52cm}

\textbf{Notation and conventions.} We work on the smooth category for manifolds and maps. We denote an arbitrary manifold (of positive dimension) by $M$, its smooth functions by $\cCi(M)$, its tangent bundle by $TM$, its cotangent bundle by $T^*M$, its vector fields by $\mathfrak{X}(M)$ and its differential forms by $\Omega^\bullet(M)$. 

When we consider a connection on $M$, we mean an affine connection (that is, a connection on $TM$), which can determine other connections (for instance, on $T^*M$ by duality). We will denote these connections with the same symbol.

By a vector space $V$, we mean a real finite-dimensional vector space. We use the polarization argument often: for two vector spaces $V$ and $V'$ over the same field, two totally symmetric multilinear maps $\varphi, \psi\colon V\times \ldots \times V\rightarrow V'$ coincide if and only if, for every $X\in V$, we have $ \varphi(X\varlist X)=\psi(X\varlist X).$


We use the Einstein summation convention throughout the text. 



\section{Symmetric Cartan calculus}

We first look at the linear-algebraic level before moving to smooth manifolds.

\subsection{The symmetric algebra and its derivations}\label{sec:sym-alg-derivations}

Consider a vector space $V$, its dual space $V^*$ and the tensor algebra $\otimes^\bullet V^*$. The natural action of a permutation $\pi\in \mathcal{S}_r$ on a decomposable element (with $\alpha^j\in V^*$),
\[
\pi\cdot (\alpha^1\otimes \ldots \otimes \alpha^r) \coloneqq \alpha^{\pi 1}\otimes \ldots \otimes \alpha^{\pi r},
\]
is extended linearly to $\otimes^r V^*$ and gives rise to the \textbf{symmetric projection} $\sym_r\colon \otimes^r V^*\rightarrow\otimes^r V^*$, 
\begin{align*}
\sym_r(\varphi)\coloneqq \frac{1}{r!}\sum_{\pi\in\mathcal{S}_r}\pi\cdot \varphi,
\end{align*}
and, more generally, $\sym\coloneqq \oplus_{r\in \N} \sym_r$. Consider the $\N$-graded vector space
\[
\Sym^\bullet V^*\coloneqq \frac{\otimes^\bullet V^*}{\ker \sym}\cong \Img \sym.
\]
The latter isomorphism allows us to realize $\Sym^\bullet V^*$ as a vector subspace of $\otimes^\bullet V^*$. Define the \textbf{symmetric product} of $\varphi\in\Sym^r V^*$ and $\psi\in\Sym^l V^*$ by
\begin{equation*}
\varphi\odot \psi\coloneqq \frac{(r+l)!}{r!\,l!}\sym(\varphi\otimes \psi).
\end{equation*}
We thus have the \textbf{symmetric algebra} $(\Sym^\bullet V^*,+,\odot)$.

At this linear-algebraic level, we can already look at the \textbf{contraction operator}. The degree-$(-1)$ map $\iota_X$, for any $X\in V$, descends from $\otimes^\bullet V^*$ to $\Sym^\bullet V^*$, where it becomes a derivation, that is,
\begin{equation}\label{eq:iota-X-derivation}
 \iota_X(\varphi\odot \psi)=\iota_X\varphi \odot \psi + \varphi\odot \iota_X\psi,
\end{equation} 
as we show in the proof of the next lemma. Note that the sign is always $+$, that is, it is not graded.

\begin{lemma}\label{lem:-1-der-are-contractions}
 The degree-$(-1)$ derivations of $\Sym^\bullet V^*$ are in bijective correspondence with $V$ via the contraction operator $\iota$. 
\end{lemma}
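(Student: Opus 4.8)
The map in question is $X \mapsto \iota_X$, and the first thing I would check is that it is well defined, namely that each $\iota_X$ is genuinely a degree-$(-1)$ derivation. That $\iota_X$ maps $\Sym^r V^*$ into $\Sym^{r-1}V^*$ is immediate, since contracting $X$ into one slot of a totally symmetric tensor produces a tensor that is totally symmetric in the remaining slots. The real content is the (ungraded) Leibniz rule \eqref{eq:iota-X-derivation}, and the cleanest route I see is through the identification of $(\Sym^\bullet V^*,\odot)$ with the algebra of polynomial functions on $V$, sending $\varphi\in\Sym^rV^*$ to the function $v\mapsto\tfrac1{r!}\varphi(v,\dots,v)$. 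The combinatorial factor $(r+l)!/(r!\,l!)$ in the definition of $\odot$ is exactly what makes this map an algebra isomorphism onto the ordinary product of polynomials, and under it $\iota_X$ (contraction into a single slot) corresponds to the directional derivative $\partial_X$. As $\partial_X$ satisfies the ordinary Leibniz rule, so does $\iota_X$. Alternatively, one can prove \eqref{eq:iota-X-derivation} directly on symmetric products of covectors, using polarization to reduce to powers $\alpha^{\odot r}$.

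Injectivity is then immediate by testing in degree one: for $\alpha\in V^*=\Sym^1V^*$ we have $\iota_X\alpha=\alpha(X)\in\R=\Sym^0V^*$, so $\iota_X=\iota_Y$ forces $\alpha(X)=\alpha(Y)$ for all $\alpha$, hence $X=Y$ by nondegeneracy of the pairing between $V$ and $V^*$.

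The substance of the lemma is surjectivity. Given an arbitrary degree-$(-1)$ derivation $D$, note first that $D$ vanishes on $\Sym^0V^*=\R$ for degree reasons (it maps into $\Sym^{-1}V^*=0$). The restriction $D|_{V^*}\colon V^*\to\Sym^0V^*=\R$ is a linear functional on $V^*$, hence an element of $(V^*)^*\cong V$, where finite-dimensionality is used; call it $X$, so that $D\alpha=\alpha(X)=\iota_X\alpha$ for every $\alpha\in V^*$. Thus $D$ and $\iota_X$ agree on the generating subspace $\Sym^0V^*\oplus\Sym^1V^*$. Since $\Sym^\bullet V^*$ is generated as an algebra in degree one, an induction on $r$ using the Leibniz rule
\[
D(\alpha^1\odot\cdots\odot\alpha^r)=\sum_{i=1}^r\alpha^1\odot\cdots\odot D\alpha^i\odot\cdots\odot\alpha^r,
\]
together with the identical formula for $\iota_X$, yields $D=\iota_X$ on all of $\Sym^\bullet V^*$.

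The only genuinely delicate point is showing that $\iota_X$ is a derivation, i.e.\ that the normalization built into $\odot$ is compatible with contraction; the rest is formal. This is why I would favour the polynomial model over a head-on computation, and why I would take care with the contraction convention---contracting into one slot versus summing over all slots---so that no spurious factor of $r$ appears and the correspondence $D|_{V^*}\leftrightarrow X$ is exactly $\iota_X$.
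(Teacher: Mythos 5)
Your proof is correct, and its overall skeleton matches the paper's: both reduce the bijection to the observation that $\Sym^\bullet V^*$ is generated in degrees $0$ and $1$, where degree-$(-1)$ derivations act trivially on scalars and are determined on $V^*$ by an element of $(V^*)^*\cong V$. The genuine difference lies in how you verify that $\iota_X$ is a derivation, which is the only computational content of the lemma. The paper does this head-on: it evaluates $\iota_X(\varphi\odot\psi)$ on repeated copies of a vector, splits the symmetric group $\mathcal{S}_{r+l-1}$ into the permutations sending $1$ into the first $r$ slots versus the last $l$, counts cardinalities $(r+l-1)!\,r$ and $(r+l-1)!\,l$, and concludes by polarization. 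You instead pass to the algebra of polynomial functions on $V$ via $\varphi\mapsto\bigl(v\mapsto\tfrac{1}{r!}\varphi(v,\dots,v)\bigr)$, under which $\odot$ becomes the ordinary product and $\iota_X$ becomes the directional derivative $\partial_X$; the Leibniz rule is then inherited from calculus. Both computations are sound (and your normalization checks out: $\partial_X\tilde\varphi(v)=\tfrac{1}{(r-1)!}\varphi(X,v,\dots,v)=\widetilde{\iota_X\varphi}(v)$, so no stray factor of $r$ appears). Your route is arguably more conceptual and anticipates the correspondence with polynomials in velocities that the paper only introduces later, in Section \ref{sec:geo-int-sym-der}; the paper's direct count has the advantage of being self-contained at the linear-algebraic stage and of exhibiting exactly where the combinatorial factor in the definition of $\odot$ is consumed. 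Your injectivity and surjectivity arguments, including the induction via the Leibniz rule, are complete and agree with the paper's (terser) generation argument.
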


\begin{proof}
 \sloppy We first prove that the contraction operator is indeed a derivation of $\Sym^\bullet V^*$. Given $\varphi\in\Sym^r V^*$ and $\psi\in\Sym^l V^*$, we denote $n\coloneqq r+l-1$. We shall use the shorthand notation $\varphi(Y)\coloneqq \varphi(Y\varlist Y)$ for a symmetric $r$-tensor $\varphi$ acting on $r$ copies of a vector $Y\in V$. Then, for all $Y\in V$,
 \begin{equation*}
 (\iota_X(\varphi\odot\psi))(Y)=\frac{1}{r!l!}\sum_{\pi\in\mathcal{S}_n}\iota_X(\pi\cdot(\varphi\otimes \psi))(Y).
\end{equation*}
We can write the symmetric group $\mathcal{S}_n$ as the disjoint union of the two subsets $\mathcal{S}^r_n\coloneqq \lbrace \pi\in\mathcal{S}_n\,|\, 1\in\pi(\lbrace 1\varlist r\rbrace)\rbrace$ and $\mathcal{S}^l_n\coloneqq \lbrace \pi\in\mathcal{S}_n\,|\, 1\in\pi(\lbrace r+1\varlist r+l\rbrace)\rbrace$. Therefore,
\begin{equation*}
 (\iota_X(\varphi\odot\psi))(Y)=\frac{1}{r!l!}\Big(\sum_{\pi\in\mathcal{S}^r_n}(\iota_X\varphi)(Y)\,\psi(Y)+\sum_{\pi\in\mathcal{S}^l_n}\varphi(Y)\,(\iota_X\psi)(Y)\Big).
\end{equation*}
Since the cardinal of the sets $\mathcal{S}^r_n$ and $\mathcal{S}^l_n$ is $n!\,r$ and $n!\,l$, respectively, we arrive at
\begin{align*}
(\iota_X(\varphi\,\odot\,\psi))(Y)&=\frac{n!}{(r-1)!l!}(\iota_X\varphi)(Y)\,\psi(Y)+\frac{n!}{r!(l-1)!}\,\varphi(Y)\,(\iota_X\psi)(Y)\\
&=\frac{1}{(r-1)!l!}\sum_{\pi\in \mathcal{S}_n}(\iota_X\varphi)(Y)\,\psi(Y)+\frac{1}{r!(l-1)!}\sum_{\pi\in \mathcal{S}_n}\varphi(Y)\,(\iota_X\psi)(Y)\\
&=(\iota_X\varphi\odot\psi+\varphi\odot\iota_X\psi)(Y).
\end{align*}
Identity \eqref{eq:iota-X-derivation} then follows from polarization, so $\iota_X$ is a derivation. As an algebra, $\Sym^\bullet V^*$ is generated by $\Sym^0 V^*$, that is, the scalars (where degree-$(-1)$ maps act trivially), and $\Sym^1 V^*=V^*$, whose linear maps are exactly given by $(V^*)^*\cong V$ via the contraction $\iota$.
\end{proof}

\subsection{Geometric derivations} 
Given a manifold $M$, we replace $V$ in Section \ref{sec:sym-alg-derivations} by $T_m M$ and consider the $r$th-symmetric power bundle 
\[ \Sym^r T^*M, \]
whose sections we denote by $\Upsilon^r(M)$. We define the graded $\cCi(M)$-module
\[ 
\Upsilon^\bullet (M) \coloneqq \bigoplus_{r\in\N} \Upsilon^r(M).
\]

\begin{remark}
Note that $\Upsilon^0(M)=\cCi(M)$ and $\Upsilon^1 (M)=\Omega^1(M)$, we will use one notation or the other depending on the context. On the other hand, we have avoided talking about $\Sym^\bullet T^*M$ since it would be a bundle of infinite rank.
\end{remark}

\begin{remark}\label{rem:notation-upsilon-bundle}
It will be convenient to also introduce the notation
\[\Upsilon^r(M,E)\coloneqq \Gamma(\Sym^rT^*M\otimes E)\]
for an arbitrary vector bundle $E$ over $M$ and, analogously, $\Upsilon^\bullet(M,E)$. 
\end{remark}

The symmetric projection, the symmetric product and the contraction of Section \ref{sec:sym-alg-derivations} are easily extended to tensor fields. We thus obtain a graded algebra \[(\Upsilon^\bullet(M),+,\odot).\] The derivations of any graded algebra together with the commutator form a Lie algebra that is moreover graded. In this case, we denote it
by \( \Der(\Upsilon^\bullet(M)).\) For derivations of fixed degree, we will use a subindex, e.g., $\iota_X\in\Der_{-1}(\Upsilon^\bullet(M))$.

Since the exterior derivative, together with the contraction, generates the Cartan calculus for $\Omega^\bullet(M)$, now we look for analogous derivations of $\Upsilon^\bullet (M)$, which we call geometric.

\begin{definition}\label{def:geo-der}
A \textbf{geometric derivation} $D\in\der(\Upsilon^\bullet (M))$ is a derivation of degree $1$ satisfying, for all $f\in \cCi(M)$ and $X\in\mathfrak{X}(M)$,
\begin{equation*}
(Df)(X)=Xf.
\end{equation*}
\end{definition}

Every derivation of $\Upsilon^\bullet (M)$ is a local operator, and since the action on functions is prescribed, any geometric derivation is uniquely characterized by its action on $1$-forms. Namely, it is determined by a linear map $D\colon \Upsilon^1 (M)\rightarrow\Upsilon^2 (M)$ such that
\begin{equation}\label{Symderchar}
D(f\alpha)=\dif f\odot\alpha+f D\alpha
\end{equation} 
for all $f\in \cCi(M)$ and $\alpha\in\Upsilon^1 (M)=\Omega^1(M)$. Formula \eqref{Symderchar} resembles the defining property of a connection on the cotangent bundle. In fact, given such a connection $\nabla\colon \Omega^1(M)\rightarrow \Omega^1(M, T^*M)$, we define the linear map $\nabla^s\colon \Upsilon^1(M)\rightarrow\Upsilon^2(M)$ by
\begin{equation}\label{nablas}
\nabla^s\coloneqq 2\sym\circ\,\nabla,
\end{equation}
which satisfies 
\begin{equation*}
\nabla^s(f\alpha)=2\sym(\nabla(f\alpha))=2\sym(\dif f\otimes\alpha+f\nabla\alpha)=\dif f\odot \alpha+f\nabla^s\alpha.
\end{equation*}

\begin{definition}\label{symderdef}
Let $\nabla$ be a connection on $M$. The \textbf{symmetric derivative} corresponding to $\nabla$ is the geometric derivation $\nabla^s\in\der(\Upsilon^\bullet (M))$, defined by \eqref{nablas}.
\end{definition}

In fact, these are all the possible geometric derivations.

\begin{lemma}\label{lem:geo-der-are-sym-der}
Every geometric derivation is the symmetric derivative corresponding to some affine connection.
\end{lemma}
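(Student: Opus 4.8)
The plan is to establish the surjectivity of the assignment $\nabla\mapsto\nabla^s$: given an arbitrary geometric derivation $D$, I will produce a connection $\nabla$ with $\nabla^s=D$. By the discussion preceding the lemma, $D$ is completely determined by its restriction to $\Upsilon^1(M)=\Omega^1(M)$, a linear map $D\colon\Omega^1(M)\to\Upsilon^2(M)$ obeying the Leibniz rule \eqref{Symderchar}. One is tempted to simply set $\nabla:=\tfrac12 D$, but this fails: the Leibniz rule for $D$ produces the symmetric product $\dif f\odot\alpha=\dif f\otimes\alpha+\alpha\otimes\dif f$, whereas a connection requires the non-symmetrized term $\dif f\otimes\alpha$. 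The remedy is to correct a fixed reference connection by a tensor.

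Concretely, I would fix any connection $\nabla^0$ on $T^*M$ (which exists by a partition-of-unity argument) and form its symmetric derivative $(\nabla^0)^s=2\sym\circ\nabla^0$, itself a geometric derivation. Consider the difference $\Delta:=D-(\nabla^0)^s$, a degree-$1$ derivation of $\Upsilon^\bullet(M)$. Since $D$ and $(\nabla^0)^s$ are both geometric, they agree on $\Upsilon^0(M)=\cCi(M)$: indeed $Df=\dif f=(\nabla^0)^s f$, because $(Df)(X)=Xf=(\dif f)(X)$. Hence $\Delta$ vanishes on functions, and the Leibniz rule gives $\Delta(f\alpha)=\Delta f\odot\alpha+f\Delta\alpha=f\Delta\alpha$. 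Thus $\Delta\colon\Omega^1(M)\to\Upsilon^2(M)$ is $\cCi(M)$-linear, i.e. tensorial; as a bundle map $T^*M\to\Sym^2 T^*M$ it is an element of $\Upsilon^2(M,TM)=\Gamma(\Sym^2 T^*M\otimes TM)$, and in particular is valued in symmetric $2$-tensors (cf. Remark \ref{rem:notation-upsilon-bundle} and the affine-space structure appearing in Proposition \ref{prop:corr-torsion-free-geo-der}).

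Finally I would define $\nabla:=\nabla^0+\tfrac12\Delta$, regarding $\tfrac12\Delta$ as an $\End(T^*M)$-valued $1$-form. Because $\tfrac12\Delta$ is $\cCi(M)$-linear, one checks $\nabla(f\alpha)=\dif f\otimes\alpha+f\nabla\alpha$, so $\nabla$ is again a connection on $T^*M$ (equivalently, by duality, an affine connection on $M$). Applying $2\sym$ and using that $\Delta$ is already symmetric, so that $2\sym(\tfrac12\Delta)=\Delta$, I obtain $\nabla^s=(\nabla^0)^s+2\sym(\tfrac12\Delta)=(\nabla^0)^s+\Delta=D$, as desired. The only real subtlety, and the step I expect to require the most care, is the bookkeeping in this last paragraph: one must verify that the mismatch between the symmetric product $\odot$ in the Leibniz rule for $D$ and the tensor product $\otimes$ in the defining rule for a connection is exactly the content absorbed by the reference connection $\nabla^0$, and that the symmetry of $\Delta$ makes the factor $\tfrac12$ come out correctly under $2\sym$.
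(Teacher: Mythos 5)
Your argument is correct, but it is not the route the paper takes. The paper avoids any reference connection: it defines $\nabla:=\frac{1}{2}\rest{(D+\dif)}{\Omega^1(M)}$ outright and observes that the identity $\dif f\odot\alpha+\dif f\wedge\alpha=2\,\dif f\otimes\alpha$ converts the symmetric Leibniz rule for $D$ into the connection Leibniz rule, while $\sym(\dif\alpha)=0$ gives $2\sym\circ\nabla=D$ immediately. In other words, the exterior derivative itself is the canonical ``antisymmetric correction'' whose symmetrization vanishes, so no choices are needed and the construction is completely explicit. Your proof instead runs the standard torsor argument: fix a reference connection $\nabla^0$, show the difference $\Delta=D-(\nabla^0)^s$ is tensorial and valued in $\Upsilon^2(M,TM)$, and absorb $\tfrac12\Delta$ into $\nabla^0$; the bookkeeping you flag (that $\Delta$ is already symmetric, so $2\sym(\tfrac12\Delta)=\Delta$) does come out correctly, and the verification that $\nabla^0+\tfrac12\Delta$ is again a connection is exactly the $\cCi(M)$-linearity of $\Delta$. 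What your approach costs is the a priori existence of a connection (a partition-of-unity input the paper's proof does not need) and a canonical answer (your $\nabla$ depends on $\nabla^0$, which is harmless here since the lemma only asks for \emph{some} connection, and is anyway unavoidable by Lemma \ref{lem:sym-der-torsion-free}); what it buys is that the same computation is precisely the one that later exhibits both sides of Proposition \ref{prop:corr-torsion-free-geo-der} as $\Upsilon^2(M,TM)$-affine spaces, so you have in effect proved part of that proposition along the way.
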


\begin{proof}
Consider an arbitrary geometric derivation $D$ and define 
\begin{equation*}
\nabla\coloneqq \frac{1}{2}\rest{(D+\dif)}{\Omega^1(M)},
\end{equation*}
which is a linear map $\Omega^1(M)\rightarrow\Omega^1(M,T^*M)$. Moreover, for all $f\in \cCi(M)$ and $\alpha\in\Omega^1(M)$,
\begin{align*}
\nabla(f\alpha)&=\frac{1}{2}(D(f\alpha)+\dif(f\alpha))=\frac{1}{2}(\dif f\odot\alpha+fD\alpha+\dif f\wedge\alpha+f\dif\alpha)\\
&=\frac{1}{2}(\dif f\otimes\alpha+\alpha\otimes\dif f+\dif f \otimes \alpha-\alpha\otimes\dif f)+f\nabla\alpha=\dif f\otimes\alpha+f\nabla\alpha,
\end{align*}
thus $\nabla$ is a connection. As $\dif\alpha\in\Omega^2(M)$, $\sym(\dif\alpha)=0$, so $\nabla^s=2\sym\circ\nabla=D$.
\end{proof}

To any connection $\nabla$ on $M$ with torsion $T_\nabla \in\Omega^2(M,TM)$ we can associate the torsion-free connection $\nabla^0$ given, for $X,Y\in\mathfrak{X}(M)$, by
\begin{equation*}
\nabla^0_XY\coloneqq \nabla_XY-\frac{1}{2}T_\nabla (X,Y).
\end{equation*}

\begin{lemma}\label{lem:sym-der-torsion-free}
 Given two affine connections, their symmetric derivatives are the same if and only if their associated torsion-free connections are the same.
\end{lemma}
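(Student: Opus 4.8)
The plan is to reduce both sides of the claimed equivalence to a single condition on the difference of the two connections. Let $\nabla$ and $\nabla'$ be connections on $M$ and write $\nabla'=\nabla+A$, where the difference tensor $A(X,Y):=\nabla'_XY-\nabla_XY$ is a section of $T^*M\otimes T^*M\otimes TM$; recall that any two affine connections differ by exactly such a tensor. I denote its symmetric part in the two lower arguments by $A^{\sym}(X,Y):=\tfrac{1}{2}\bigl(A(X,Y)+A(Y,X)\bigr)$. The whole argument amounts to showing that each of the two conditions ``$\nabla^s=(\nabla')^s$'' and ``$\nabla^0=(\nabla')^0$'' is equivalent to $A^{\sym}=0$.

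First I would show that the symmetric derivative sees the connection only through its symmetric part. Since geometric derivations are determined by their action on $\Omega^1(M)$ (as established just before Definition \ref{symderdef}), it suffices to compare $\nabla^s$ and $(\nabla')^s$ on $1$-forms. Using the induced connection on $T^*M$, $(\nabla_X\alpha)(Y)=X(\alpha(Y))-\alpha(\nabla_XY)$, and the fact that $\nabla^s=2\sym\circ\nabla$, one computes for $\alpha\in\Omega^1(M)$ and $X,Y\in\mathfrak{X}(M)$
\[
(\nabla^s\alpha)(X,Y)=(\nabla_X\alpha)(Y)+(\nabla_Y\alpha)(X)=X(\alpha(Y))+Y(\alpha(X))-\alpha(\nabla_XY+\nabla_YX),
\]
which manifestly factors through $\nabla_XY+\nabla_YX$. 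Hence $\nabla^s=(\nabla')^s$ if and only if $\nabla_XY+\nabla_YX=\nabla'_XY+\nabla'_YX$ for all $X,Y$, that is, if and only if $A^{\sym}=0$.

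Next I would compute the effect of $A$ on the associated torsion-free connections. From the torsion formula one gets $T_{\nabla'}(X,Y)-T_\nabla(X,Y)=A(X,Y)-A(Y,X)$, and substituting this into $\nabla^0_XY=\nabla_XY-\tfrac{1}{2}T_\nabla(X,Y)$ yields
\[
(\nabla')^0_XY-\nabla^0_XY=A(X,Y)-\tfrac{1}{2}\bigl(A(X,Y)-A(Y,X)\bigr)=A^{\sym}(X,Y).
\]
Therefore $\nabla^0=(\nabla')^0$ if and only if $A^{\sym}=0$ as well. Combining the two computations, both conditions are equivalent to the vanishing of $A^{\sym}$, which proves the lemma. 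I do not expect a genuine obstacle here: the only care needed is the bookkeeping of the symmetric and antisymmetric parts of $A$ together with the sign conventions in the induced $T^*M$-connection and in the torsion. The conceptual content is simply that both the symmetric derivative and the map $\nabla\mapsto\nabla^0$ depend on a connection only through its symmetric part.
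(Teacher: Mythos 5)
Your proof is correct and follows essentially the same route as the paper: both arguments reduce each condition to the statement that $\nabla_XY+\nabla_YX=\nabla'_XY+\nabla'_YX$, the paper by rewriting $\nabla^0_XY=\tfrac{1}{2}(\nabla_XY+\nabla_YX+[X,Y])$ and you by the equivalent computation $(\nabla')^0-\nabla^0=A^{\sym}$. The difference-tensor bookkeeping is only a cosmetic repackaging of the paper's argument.
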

\begin{proof}
Note that the symmetric derivative corresponding to a connection $\nabla$ on $M$ may be explicitly written as
\begin{equation*}
 (\nabla^s\alpha)(X,Y)=X\alpha(Y)+Y\alpha(X)-\alpha(\nabla_XY+\nabla_YX)
\end{equation*}
for all $\alpha\in\Upsilon^1(M)$ and $X,Y\in\mathfrak{X}(M)$. It follows that two connections $\nabla$ and $\nabla'$ on $M$ induce the same symmetric derivative if and only if
\begin{equation}\label{eq:sym-der-torsion-free}
 \nabla_XY+\nabla_YX=\nabla'_XY+\nabla'_YX.
\end{equation}
On the other hand, the associated torsion-free connection to $\nabla$ is given by
\begin{equation}\label{eq:assoc-tf-con}
 \nabla_X^0Y=\frac{1}{2}(\nabla_XY+\nabla_YX+[X,Y]),
\end{equation}
hence $\nabla^0={\nabla'}^0$ if and only if \eqref{eq:sym-der-torsion-free} is satisfied.
\end{proof}

\begin{remark}
 Note that, geometrically, having the same torsion-free connection means that they have the same geodesics. It may be seen easily from that both the geodesic equation and the associated torsion-free connection depend solely on the symmetric part of the original affine connection, see \eqref{eq:assoc-tf-con}. 
\end{remark}

\begin{proposition}\label{prop:corr-torsion-free-geo-der}
 The map $\nabla\mapsto \nabla^s$ gives an isomorphism of $\Upsilon^2(M,TM)$-affine spaces:
 \begin{equation*}
\left\{\begin{array}{c}
 \text{torsion-free}\\
 \text{connections on } M
 \end{array}\right\}\overset{\sim}{\longleftrightarrow}\left\{\begin{array}{c}
 \text{geometric derivations of the algebra}\\
 \text{of symmetric forms on }M 
 \end{array}\right\}.
\end{equation*}
\end{proposition}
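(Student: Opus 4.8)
The plan is to derive the statement directly from the two preceding lemmas and then promote the resulting set bijection to an isomorphism of affine spaces. First I would check that $\nabla\mapsto\nabla^s$ is a bijection once restricted to torsion-free connections. Surjectivity onto geometric derivations follows from Lemma \ref{lem:geo-der-are-sym-der}: any geometric derivation equals $\nabla^s$ for some affine connection $\nabla$, and passing to the associated torsion-free connection $\nabla^0$ does not change the symmetric derivative by Lemma \ref{lem:sym-der-torsion-free}, since $(\nabla^0)^0=\nabla^0$; thus every geometric derivation is $(\nabla^0)^s$ with $\nabla^0$ torsion-free. Injectivity follows from Lemma \ref{lem:sym-der-torsion-free} in the same vein: if $\nabla,\nabla'$ are torsion-free and $\nabla^s={\nabla'}^s$, then $\nabla^0={\nabla'}^0$, and since a torsion-free connection equals its own associated torsion-free connection we conclude $\nabla=\nabla'$.

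Next I would make the two affine structures over $\Upsilon^2(M,TM)$ explicit. On the source, the difference $A(X,Y):=\nabla_XY-{\nabla'}_XY$ of two torsion-free connections is $\cCi(M)$-bilinear and, since both torsions vanish, symmetric, so $A\in\Upsilon^2(M,TM)$; conversely, adding any such $A$ to a torsion-free connection produces another one, and this action is free and transitive. On the target, the difference $D-D'$ of two geometric derivations is a degree-$1$ derivation annihilating $\cCi(M)$; the Leibniz rule then gives $(D-D')(f\alpha)=f\,(D-D')\alpha$, so $D-D'$ is a $\cCi(M)$-linear map $\Upsilon^1(M)\to\Upsilon^2(M)$, that is, a section of $\Hom(T^*M,\Sym^2T^*M)\cong\Sym^2T^*M\otimes TM$, again an element of $\Upsilon^2(M,TM)$; the resulting action is evidently free and transitive.

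It then remains to verify that the bijection is affine with invertible linear part, the only computational step. Using the explicit expression $(\nabla^s\alpha)(X,Y)=X\alpha(Y)+Y\alpha(X)-\alpha(\nabla_XY+\nabla_YX)$ recorded in the proof of Lemma \ref{lem:sym-der-torsion-free}, for ${\nabla'}=\nabla+A$ one computes
\begin{equation*}
\big(({\nabla'}^s-\nabla^s)\alpha\big)(X,Y)=-\alpha\big(A(X,Y)+A(Y,X)\big)=-2\,\alpha\big(A(X,Y)\big),
\end{equation*}
where the last equality uses the symmetry of $A$. Hence the difference of the symmetric derivatives depends $\R$-linearly on $A$ via contraction, so $\nabla\mapsto\nabla^s$ is an affine map whose linear part is the isomorphism of $\Upsilon^2(M,TM)$ carrying $A$ to the derivation $\alpha\mapsto-2\,\alpha(A(\,\cdot\,,\,\cdot\,))$. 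I do not expect a genuine obstacle here: the work is essentially bookkeeping, and the only point demanding care is keeping the two identifications with $\Upsilon^2(M,TM)$ mutually consistent and absorbing the factor $-2$ into the normalization of the target identification, so that the map becomes $\Upsilon^2(M,TM)$-equivariant and its linear part is recorded as the identity.
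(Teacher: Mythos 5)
Your proposal is correct and follows essentially the same route as the paper: bijectivity from Lemmas \ref{lem:geo-der-are-sym-der} and \ref{lem:sym-der-torsion-free}, identification of both sides as $\Upsilon^2(M,TM)$-affine spaces via differences, and affinity of $\nabla\mapsto\nabla^s$ (where you make explicit the linear part $A\mapsto -2\,\alpha(A(\cdot,\cdot))$ that the paper dismisses as "clearly affine", consistent with the normalization $\nabla'=\nabla-\tfrac12\sigma$, $\nabla'^s=\nabla^s+\iota^s_\sigma$ appearing later in Proposition \ref{prop: symCartan variation}). The only blemish is a harmless sign-convention clash: you first set $A:=\nabla-\nabla'$ and later compute with $\nabla'=\nabla+A$.
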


\begin{proof}
 From Lemmas \ref{lem:geo-der-are-sym-der} and \ref{lem:sym-der-torsion-free}, there is a bijective correspondence. The set of torsion-free connections on $M$ is an affine space modelled on $\Upsilon^2(M,TM)$. On the other hand, for geometric derivations $D'$ and $D$,
\begin{equation*}
(D'-D)(f\alpha)=\dif f\odot\alpha+fD'\alpha-\dif f\odot\alpha-fD\alpha=f(D'-D)(\alpha)
\end{equation*}
for $f\in \cCi(M)$ and $\alpha\in\Upsilon^1(M)$, so the difference between $\rest{D'}{\Upsilon^1(M)}$ and $\rest{D}{\Upsilon^1(M)}$ may be viewed as an element of $\Upsilon^2(M,TM)$. Since every geometric derivation is uniquely determined by its restriction to $1$-forms, they are also a $\Upsilon^2(M,TM)$-affine space. The map $\nabla\mapsto \nabla^s$ is clearly affine and the result follows.
\end{proof}

Thus, without loss of generality, we can and will mostly consider (apart from some instances in Sections \ref{sec:affine-manifold-morphism}, \ref{sec:Patterson-Walker} and Appendix \ref{app: lifts}) connections to be torsion free.

\begin{remark}\label{rem: D^2}
 We have seen in the introduction that it is not natural for the replacement of the exterior derivative to square to zero. Actually, no geometric derivation squares to zero: for any $f\in \cCi(M)$, if $D\circ D=0$, we have
\begin{equation*}
0=D(Df^2)=D(2fDf)=2D f\odot Df+2fD(Df)=2\dif f\odot\dif f,
\end{equation*}
which is an contradiction (as $\dim M >0$), hence $D\circ D\neq 0$. In Appendix \ref{app: non-geometric}, we show that the same holds for non-geometric derivations.
\end{remark}

The unique extension of the symmetric derivative $\nabla^s$ defined by \eqref{nablas}
to $\Upsilon^\bullet(M)$ is given, for $\varphi\in\Upsilon^r(M)$, by
\begin{equation*}
 \nabla^s\varphi=(r+1)\sym(\nabla\varphi).
\end{equation*}
Indeed, the endomorphism on the right-hand side is of degree $1$, it coincides with the exterior derivative when $\varphi$ is a function and it reduces to \eqref{nablas} when $\varphi$ is a $1$-form. Moreover, it is a derivation (as the covariant derivative with respect to any vector field is), so it coincides with $\nabla^s$.

Explicitly for $\varphi\in\Upsilon^r(M)$ and $X_j\in\mathfrak{X}(M)$ we have
\begin{equation}\label{eq:nabla-s}
(\nabla^s\varphi)(X_1\varlist X_{r+1})=\sum_{j=1}^{r+1}(\nabla_{X_j}\varphi)(X_1\varlist \hat{X}_j\varlist X_{r+1}).
\end{equation}

\subsection{Geometric interpretation of the symmetric derivative}\label{sec:geo-int-sym-der}

We use the relation between symmetric forms on $M$ and a certain class of smooth functions on $TM$ to give a geometric interpretation of $\nabla^s$.


\begin{definition}\label{polvel}
 We call a smooth function $\xi\in \cCi(TM)$ a \textbf{degree}-$r$ \textbf{polynomial in velocities} on $M$ if it is a \textit{homogeneous function of degree $r$}, that is, for every $\lambda\in\R$ and $u\in TM$,
 \begin{equation*}
 \xi(\lambda\,u)=\lambda^r \xi(u).
 \end{equation*}
\end{definition}

 The direct sum of the spaces of degree-$r$ polynomials for $r\in \N$ is a subalgebra of $\cCi(TM)$ that is unital and graded. We call its elements \textbf{polynomials in velocities} on $M$. The next lemma explains this notation.

 \begin{lemma}\label{lem:polynomials}
 Let $\xi\in\cCi(TM)$ be a degree-$r$ polynomial in velocities on $M$. If $r=0$, there is a unique $f\in\cCi(M)$ such that $\xi=\pr^*f$. Whereas, for $r>0$, given a natural coordinate chart $(TU,\lbrace x^i\rbrace\cup\lbrace v^j\rbrace)$, there is a unique set of functions $\lbrace \xi_{j_1\dots j_r}\rbrace\subseteq \cCi(U)$ such that $\rest{\xi}{TU}=(\pr^*\xi_{j_1\dots j_r})\,v^{j_1}\dots v^{j_r}$. 
 \end{lemma}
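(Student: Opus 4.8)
The plan is to prove the two cases separately, reducing the main case $r>0$ to a pointwise statement about smooth homogeneous functions on $\R^n$, where $n=\dim M$. Throughout I read the coefficients $\xi_{j_1\dots j_r}$ as totally symmetric in their indices, which is both the natural choice (they will be the components of a symmetric tensor) and exactly what makes them unique. Since the assertion is chart-local, it suffices to work in a single natural chart $(TU,\{x^i\}\cup\{v^j\})$. The case $r=0$ is immediate: homogeneity of degree $0$ means $\xi(\lambda u)=\xi(u)$ for all $\lambda$, so $\xi$ is constant along each fibre $T_mM$, hence factors through $\pr$ as $\xi=\pr^* f$ with $f:=\xi\circ s$, where $s$ is the zero section; then $f$ is smooth as a composition of smooth maps, and unique because $\pr$ is surjective.

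Now fix $r>0$ and freeze the base point, writing $g(v):=\rest{\xi}{TU}(x,v)$ as a smooth function of $v\in\R^n$ depending smoothly on the parameter $x$; the homogeneity condition reads $g(\lambda v)=\lambda^r g(v)$. The first key step is to differentiate this identity: applying a multi-index derivative $\partial^\alpha_v$ gives $\lambda^{|\alpha|}(\partial^\alpha g)(\lambda v)=\lambda^r(\partial^\alpha g)(v)$, and evaluating at $v=0$ yields $\lambda^{|\alpha|}(\partial^\alpha g)(0)=\lambda^r(\partial^\alpha g)(0)$ for all $\lambda\in\R$. Comparing powers of $\lambda$ forces $(\partial^\alpha g)(0)=0$ whenever $|\alpha|\neq r$. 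In particular $g(x,0)=0$ and the Taylor expansion of $g$ at the origin carries only its degree-$r$ part, so I set $P(v):=\sum_{|\alpha|=r}\tfrac{1}{\alpha!}(\partial^\alpha g)(0)\,v^\alpha$, a homogeneous polynomial of degree $r$.

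The second key step is to show $g=P$. The difference $\psi:=g-P$ is again smooth and homogeneous of degree $r$, so the vanishing statement above applies to $\psi$ as well, killing all its derivatives of order $\neq r$ at the origin; the derivatives of order exactly $r$ also vanish, since $P$ reproduces those of $g$ by construction. Thus $\psi$ is flat at $0$, and Taylor's theorem gives $\psi(v)=o(|v|^N)$ for every $N$. Combining this with homogeneity, $\psi(v)=\lambda^{-r}\psi(\lambda v)=\lambda^{-r}o(\lambda^N)$ as $\lambda\to 0$; choosing $N>r$ forces $\psi(v)=0$ for every $v$, i.e. $g=P$. The coefficients $\tfrac1{\alpha!}(\partial^\alpha g)(0)$ depend smoothly on $x$, being derivatives of $\xi$ evaluated along the zero section, so $\rest{\xi}{TU}$ is a degree-$r$ homogeneous polynomial in the $v^j$ with coefficients in $\cCi(U)$. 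Expanding each monomial $v^\alpha$ as $v^{j_1}\cdots v^{j_r}$ and symmetrizing the coefficients produces the totally symmetric $\xi_{j_1\dots j_r}\in\cCi(U)$ with $\rest{\xi}{TU}=(\pr^*\xi_{j_1\dots j_r})\,v^{j_1}\cdots v^{j_r}$. Uniqueness follows because the symmetric coefficients are recovered by the polarization-type formula $\xi_{j_1\dots j_r}=\tfrac1{r!}\,(\partial_{v^{j_1}}\cdots\partial_{v^{j_r}}\xi)\big|_{v=0}$, which is manifestly independent of the chosen representation.

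The main obstacle is the second key step: a smooth function need not be analytic, so one cannot simply invoke a convergent power series to pass from homogeneity to polynomiality. The flatness-plus-scaling argument is precisely what bridges this gap, using the integrality of the degree in an essential way. The remaining points — the smooth dependence of the coefficients on the base point and the reindexing into symmetric components — are routine.
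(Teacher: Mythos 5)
Your proof is correct, but it reaches the key pointwise statement (a smooth function on $\R^n$ homogeneous of integer degree $r$ is a homogeneous polynomial) by a genuinely different mechanism than the paper. The paper differentiates the homogeneity relation once to see that each partial derivative $\partial \xi_0/\partial u^i$ is homogeneous of degree $r-1$, and then inducts on $r$ via Euler's homogeneous function theorem, writing $\xi_0 = \tfrac{1}{r}\sum_i u^i\,\partial\xi_0/\partial u^i$ to climb back up one degree at a time from the already-settled case $r=0$. You instead differentiate the relation to all orders at the origin to see that the Taylor expansion of $g$ at $0$ is concentrated in degree $r$, and then dispose of the non-analytic remainder by the flatness-plus-scaling argument $\psi(v)=\lambda^{-r}\psi(\lambda v)=\lambda^{-r}o(\lambda^N)\to 0$. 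Both arguments are sound and both hinge on the integrality of $r$; yours is a single direct computation with no induction, at the cost of invoking Taylor's theorem with remainder, while the paper's is shorter on the page but leaves the inductive step (feeding the polynomiality of the partials into Euler's identity) to the reader. Two points where your write-up is actually more complete than the paper's: you make explicit that the coefficients must be taken totally symmetric for the uniqueness claim to hold (otherwise they are only unique up to symmetrization), and you record the recovery formula $\xi_{j_1\dots j_r}=\tfrac{1}{r!}\,\partial_{v^{j_1}}\cdots\partial_{v^{j_r}}\xi\big|_{v=0}$, which also justifies the smooth dependence of the coefficients on the base point — a step the paper compresses into one sentence.
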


 \begin{proof}
 Consider a degree-$0$ polynomial in velocities $\xi\in\cCi(TM)$. We have $\xi(\lambda u)=\xi(u)$, that is, $\xi$ is constant along the fibres of $TM$. Denote by $0\in\mathfrak{X}(M)$ the null vector field and define $f\in\cCi(M)$ by $f\coloneqq \xi\circ 0$. The uniqueness of $f$ follows from the surjectivity of $\pr$.
 
 Assume now that $r>0$. By the choice of a natural coordinate chart on $TM$, a degree-$r$ polynomial in velocities $\xi\in\cCi(TM)$ restricted to a fibre of $TM$ corresponds uniquely to a smooth homogeneous function $\xi_0\colon \R^n\rightarrow\R$ of degree $r$, where $n\coloneqq \dim M$. It is now enough to check that $\xi_0$ is a homogeneous polynomial of degree $r$. Acting by the partial derivative $\frac{\partial}{\partial u^i}$ on the relation $\xi_0(\lambda u^1\varlist\lambda u^n)=\lambda^r\xi_0(u^1\varlist u^n)$, we obtain $\lambda\frac{\partial \xi_0}{\partial u^i}(\lambda u^1\varlist \lambda u^n)=\lambda^r \frac{\partial \xi_0}{\partial u^i}( u^1\varlist u^n)$, hence $\frac{\partial \xi_0}{\partial u^i}\colon \R^n\rightarrow \R$ is a smooth homogeneous function of degree $r-1$. By \textit{Euler's homogenous function theorem}, every smooth homogenous function $\chi\colon \R^n\rightarrow \R$ of degree $r$ satisfies
 \begin{equation*}
 r\,\chi(u^1\varlist u^n)=\sum_{i=1}^nu^i\frac{\partial \chi}{\partial u^i}(u^1\varlist u^n).
 \end{equation*}
 Since we have already proven the statement for $r=0$, it follows, by induction on $r$, that $\xi_0$ is a homogeneous polynomial of degree $r$. Finally, the smoothness of $\xi$ gives the existence and smoothness of $\lbrace \xi_{j_1\dots j_r}\rbrace$ 
 \end{proof}

The assignment $\Upsilon^r(M)\rightarrow \cCi(TM)\colon \varphi\mapsto \tilde{\varphi}$, where, for $u\in TM$,
\begin{align*}
\tilde{\varphi}(u)&\coloneqq \frac{1}{r!}\,\varphi(u\varlist u) & &\text{for $r>0$},\\
\tilde{f}&\coloneqq \pr^*f & &\text{for $r=0$}
\end{align*}
gives, by Lemma \ref{lem:polynomials} and polarization, an isomorphism of unital graded algebras
\begin{equation}\label{eq: sym forms polynomials}
 \Upsilon^\bullet(M) \overset{\sim}{\longleftrightarrow}\left\{\begin{array}{c}
 \text{polynomials in velocities on } M
 \end{array}\right\}.
\end{equation}

We show that the symmetric derivative is closely related to the \textbf{geodesic spray} of $\nabla$, which is the vector field $X_\nabla\in\mathfrak{X}(TM)$ given, for $u\in TM$, by
\begin{equation*}
 (X_\nabla)_u=u^\text{h}\in T_uTM.
\end{equation*}
The superscript $\text{h}$ here stands for the \textit{horizontal lift} to $TM$
(for more details, see Appendix \ref{app: lifts}, where the analogous horizontal lift to $T^*M$ is recalled).

\begin{remark}
 In natural local coordinates $(TU,\lbrace x^i\rbrace\cup\lbrace v^j\rbrace)$, we have
 \begin{equation*}
 (X_\nabla)_u=v^i(u)\rest{\partial_{x^i}}{u}-v^i(u) v^j(u)\,\Gamma^k_{ij}(\pr(u))\rest{\partial_{v^k}}{u}.
 \end{equation*}
\end{remark}

\begin{proposition}\label{prop: nablas polynomials}
 Let $\nabla$ be a connection on $M$. For every $\varphi\in\Upsilon^r(M)$ we have
 \begin{equation*}
 \widetilde{\nabla^s\varphi}=X_\nabla\tilde{\varphi}.
 \end{equation*}
\end{proposition}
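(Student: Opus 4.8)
The plan is to reduce both sides to the same scalar function of $u\in T_mM$, taking the geodesic flow as the conceptual engine. First I would simplify the left-hand side. By the definition of the tilde map and the explicit formula \eqref{eq:nabla-s}, evaluating $\nabla^s\varphi$ on $r+1$ copies of a single vector $u$ collapses the sum: each of the $r+1$ summands equals $(\nabla_u\varphi)(u,\ldots,u)$, with $r$ copies of $u$ in the remaining slots, so that
\begin{equation*}
\widetilde{\nabla^s\varphi}(u)=\frac{1}{(r+1)!}(\nabla^s\varphi)(u,\ldots,u)=\frac{1}{r!}(\nabla_u\varphi)(u,\ldots,u).
\end{equation*}
For $r=0$ this should be read as $\widetilde{\nabla^s f}(u)=(\dif f)(u)=u(f)$.

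Next I would compute the right-hand side from the geometric meaning of the geodesic spray. From the definition $(X_\nabla)_u=u^{\text{h}}$ together with the geodesic equation, the integral curve of $X_\nabla$ through $u$ is $t\mapsto\dot\gamma(t)$, where $\gamma$ is the geodesic with $\dot\gamma(0)=u$; equivalently, $u^{\text{h}}=\frac{\dif}{\dif t}\big|_{t=0}\dot\gamma(t)$ viewed as a curve in $TM$. Hence
\begin{equation*}
(X_\nabla\tilde{\varphi})(u)=\frac{\dif}{\dif t}\Big|_{t=0}\tilde{\varphi}(\dot\gamma(t))=\frac{1}{r!}\frac{\dif}{\dif t}\Big|_{t=0}\varphi_{\gamma(t)}(\dot\gamma(t),\ldots,\dot\gamma(t)).
\end{equation*}
Differentiating the last expression by the Leibniz rule for $\nabla$ along $\gamma$ produces one term $(\nabla_{\dot\gamma}\varphi)(\dot\gamma,\ldots,\dot\gamma)$ together with $r$ terms in which $\nabla_{\dot\gamma}\dot\gamma$ appears. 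Since $\gamma$ is a geodesic, $\nabla_{\dot\gamma}\dot\gamma=0$, so all of these latter terms vanish and, evaluating at $t=0$,
\begin{equation*}
(X_\nabla\tilde{\varphi})(u)=\frac{1}{r!}(\nabla_u\varphi)(u,\ldots,u),
\end{equation*}
which is exactly the value of $\widetilde{\nabla^s\varphi}$ at $u$ computed above. Since $u\in TM$ is arbitrary, the two functions coincide.

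The main subtlety is the passage from the intrinsic definition $(X_\nabla)_u=u^{\text{h}}$ to its geodesic-flow description; once this standard identification is in hand (it is precisely the statement that the geodesic spray integrates to the tangent lifts of geodesics, and follows from the characterization of the horizontal lift recalled in Appendix \ref{app: lifts}), the rest is a one-line application of the Leibniz rule and the geodesic equation. As an alternative that sidesteps this identification, one can argue in a natural chart: writing $\tilde\varphi=\frac{1}{r!}(\pr^*\varphi_{i_1\ldots i_r})v^{i_1}\cdots v^{i_r}$ via Lemma \ref{lem:polynomials} and applying the coordinate expression for $X_\nabla$, the $\partial_{x^a}$-part reproduces the ordinary-derivative term while the $\partial_{v^c}$-part, after using the symmetry of $\varphi_{i_1\ldots i_r}$ to collect the $r$ equal contributions, yields exactly the Christoffel correction of $(\nabla_u\varphi)(u,\ldots,u)$. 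There the only thing to watch is the bookkeeping of the combinatorial factors $r+1$, $r!$ and $r$, and the fact that $\Gamma^c_{ab}$ is contracted against the symmetric $v^av^b$, so that only the torsion-free part enters, consistently with Lemma \ref{lem:sym-der-torsion-free}.
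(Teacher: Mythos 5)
Your proof is correct, but it takes a genuinely different route from the paper's. The paper exploits the fact that $\varphi\mapsto\tilde\varphi$ is an algebra isomorphism intertwining two derivations ($\nabla^s$ on $\Upsilon^\bullet(M)$ and $X_\nabla$ on polynomials in velocities), so it suffices to check the identity on the generators, namely functions and $1$-forms; the $1$-form case is then a short computation in natural coordinates using the local expression of $X_\nabla$. You instead verify the identity directly for arbitrary degree $r$: collapsing \eqref{eq:nabla-s} on $r+1$ copies of $u$ to get $\widetilde{\nabla^s\varphi}(u)=\frac{1}{r!}(\nabla_u\varphi)(u,\ldots,u)$, and computing $X_\nabla\tilde\varphi$ by differentiating $\tilde\varphi$ along the velocity curve of the geodesic through $u$, where the Leibniz rule plus $\nabla_{\dot\gamma}\dot\gamma=0$ kills all but the $(\nabla_{\dot\gamma}\varphi)$ term. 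Both reductions are legitimate (all expressions involved are tensorial, so pointwise evaluation on copies of $u$ is fine). The one ingredient you rely on that the paper does not need at this stage is the identification of $(X_\nabla)_u=u^{\mathrm{h}}$ with the initial velocity of $t\mapsto\dot\gamma(t)$ for $\gamma$ the geodesic with $\dot\gamma(0)=u$; this is exactly the content of Lemma \ref{lem: X nabla on geodesics}, which the paper only proves later, so strictly speaking you are front-loading that (standard) fact — you correctly flag this and offer the coordinate computation as a fallback. What each approach buys: the paper's is shorter and self-contained at this point in the text, while yours is coordinate-free, uniform in the degree, and makes the geometric mechanism transparent, essentially anticipating Proposition \ref{prop: Kill polynomials} on conserved quantities along geodesics.
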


\begin{proof}
 Since $\nabla^s$ and $X_\nabla$ are derivations of $\Upsilon^\bullet (M)$ and $\cCi(TM)$ respectively. It is enough to check that the equality holds for functions and $1$-forms on $M$. For $f\in \cCi(M)$, we have
 \begin{equation*}
 (X_\nabla\tilde{f})(u)=(X_\nabla\pr^*f)(u)=uf=(\dif f)(u)=\widetilde{\nabla^s f}(u).
 \end{equation*}
 On the other hand, for $\alpha\in\Omega^1(M)$ locally given by $\rest{\alpha}{U}=\alpha_i\dif x^i$, we get
 \begin{align*}
 (X_\nabla\tilde{\alpha})(u)&=(X_\nabla)_u((\pr^*\alpha_k) v^k)=v^i(u)v^j(u)\Big(\frac{\partial \alpha_j}{\partial x^i}(\pr(u))-\alpha_k(\pr(u))\,\Gamma^k_{ij}(\pr(u))\Big)\\
 &=(\nabla_u\alpha)(u)=\frac{1}{2}(\nabla^s\alpha)(u,u)=\widetilde{\nabla^s\alpha}(u).
 \end{align*}
\end{proof}

\subsection{Killing tensors} 
Symmetric tensors that are $\nabla^s$-closed have been considered before under the name of Killing tensors. We give some examples and recall their geometric significance.

\begin{definition}\label{def: Killing tensors}
Given a connection $\nabla$ on $M$, a \textbf{Killing tensor} for $\nabla$ is a symmetric form $K\in\Upsilon^\bullet(M)$ such that 
\begin{equation*}
\nabla^sK=0.
\end{equation*}
\end{definition}

We denote the Killing tensors of homogeneous degree $r$ by $\kil^r(M,\nabla)$. They are a vector subspace, but not a $\cCi(M)$-submodule, of the space of symmetric forms $\Upsilon^r(M)$. Together with the symmetric product, they form a graded subalgebra
\[
\kil^\bullet(M,\nabla):=\bigoplus_{r\in\N} \kil^r(M,\nabla)\subseteq \Upsilon^\bullet(M),
\]
which we call the \textbf{Killing tensor algebra} of $\nabla$.

\begin{example}[Killing $0$-tensors]
 As the symmetric derivative is geometric (Definition \ref{def:geo-der}), the space of Killing $0$-tensors for any connection $\nabla$ is the space of locally constant functions.
\end{example}

\begin{example}\label{ex: Killing tensors LC}
 For the Levi-Civita connection of a (pseudo-)Riemannian metric $g$, by Proposition \ref{prop: nablasLC} below, $g^{-1}$ identifies $\kil^1(M,\lcn{g})$ with the Killing vector fields for $g$. Moreover, we always have that $g\in\kil^2(M,\lcn{g})$.
\end{example}

Let us recall that the geometric significance of Killing tensors is that they induce conserved quantities along geodesics. For this, we use a characterization of the geodesic spray $X_\nabla\in\mathfrak{X}(TM)$, which was introduced in Section \ref{sec:geo-int-sym-der}.

\begin{lemma}\label{lem: X nabla on geodesics}
 Let $\nabla$ be a connection on $M$. The geodesic spray is the unique vector field whose integral curve starting at $u\in TM$ is the curve $\dot{\gamma}:I\rightarrow TM$, where $\gamma:I\rightarrow M$ is the $\nabla$-geodesic satisfying $\dot{\gamma}(0)=u$. In particular, for any $\nabla$-geodesic $\gamma$ on $M$ and $\xi\in\cCi(TM)$ there holds
 \begin{equation*}
 (X_\nabla\xi)\circ\dot{\gamma}=\frac{\dif }{\dif t}(\xi\circ\dot{\gamma}).
 \end{equation*}
\end{lemma}

\begin{proof}
Consider a curve $V:I\rightarrow TM$. In natural coordinates $(TU, \lbrace x^i\rbrace\cup\lbrace v^j\rbrace)$,
 \begin{equation*}
 (X_\nabla)_{V(t)}=V^i(t)\rest{\partial_{x^i}}{V(t)}-V^i(t)V^j(t)\,\Gamma^k_{ij}(\gamma(t))\rest{\partial_{v^k}}{V(t)},
 \end{equation*}
where $V^i:=v^i\circ V$ and $\gamma:=\pr\circ V$. Using the notation $\gamma^i:=x^i\circ \gamma$, the equation for the integral curve of $X_\nabla$ starting at $u\in TM$ is locally given by
 \begin{align*}
 \dot{\gamma}^i(t)&=V^i(t), & \dot{V}^i(t)&=-V^i(t)V^j(t)\,\Gamma^k_{ij}(\gamma(t)), & V(0)&=u.
 \end{align*}
 Using the first equation on the second one, we recover the geodesic equation 
 \begin{align*}
 0&=\rest{\nabla_{\dot{\gamma}}\dot{\gamma}}{U}=\ddot{\gamma}^k(t)+\dot{\gamma}^i(t)\dot{\gamma}^j(t)\,\Gamma^k_{ij}(\gamma(t)), & \dot{\gamma}(0)&=u.
 \end{align*}
 The converse follows simply from the fact that every vector field is uniquely characterized by its integral curves.
\end{proof}

\begin{proposition}
\label{prop: Kill polynomials}
Let $\nabla$ be a connection on $M$. The assignment $K\mapsto \tilde{K}$ gives an isomorphism of unital graded algebras

\[ 
 \kil^\bullet(M,\nabla) \overset{\sim}{\longleftrightarrow}\left\{\begin{array}{c}
 \text{ polynomials in velocities} \text{ on } M
 \\ \text{ constant along }\nabla\text{-geodesics}
\end{array}\right\}.
\]
\end{proposition}

\begin{proof}
 By the correspondence \eqref{eq: sym forms polynomials} and Proposition \ref{prop: nablas polynomials}, we have $K\in\kil^\bullet(M,\nabla)$ if and only if $X_\nabla\tilde{K}=0$. In particular, we have $(X_\nabla\tilde{K})\circ\dot{\gamma}=0$ for every $\nabla$-geodesic $\gamma$ if $K\in\kil^\bullet(M,\nabla)$, which, by Lemma \ref{lem: X nabla on geodesics}, means that $\tilde{K}\circ\dot{\gamma}$ is constant. Conversely, assume that $\tilde{K}$ is constant along $\nabla$-geodesics, hence $(X_\nabla\tilde{K})\circ\dot{\gamma}=0$. Since there is a $\nabla$-geodesic $\gamma$ such that $\dot{\gamma}(0)=u$ for every $u\in TM$, we get $X_\nabla\tilde{K}=0$.
\end{proof}

\subsection{The symmetric Lie derivative} Once we have a characterization of the symmetric analogues of the exterior derivative, we can make the following definition, drawing inspiration from Cartan's magic formula and using the fact that the commutator of derivations is again a derivation.

\begin{definition}\label{symLalg}
The \textbf{symmetric Lie derivative} \mbox{$\pounds^s_X\in\der_0(\Upsilon^\bullet(M))$} corresponding to $\nabla$ with respect to $X\in\mathfrak{X}(M)$ is defined by
\begin{equation*}
L^s_X\coloneqq [\iota_X,\nabla^s]=\iota_X\circ\nabla^s-\nabla^s\circ\iota_X.
\end{equation*}
\end{definition}

\begin{remark}
 Note that $\pounds_X^sf=Xf$ for any $f\in \cCi(M)$ and, thanks to \eqref{eq:nabla-s}, an explicit formula is given, for $\varphi\in\Upsilon^r(M)$ and $X,X_j\in\mathfrak{X}(M)$, by
\begin{equation}\label{eq:formula-L-nabla-s}
(\pounds^s_X\varphi)(X_1\varlist X_r) =(\nabla_X\varphi)(X_1\varlist X_r) -\sum_{j=1}^r\varphi(\nabla_{X_j}X,X_1\varlist \hat{X}_j\varlist X_r). 
\end{equation}
\end{remark}

The symmetric Lie derivative is related to the covariant and the usual Lie derivatives as follows.

\begin{proposition}\label{covLL}
For any connection $\nabla$ on $M$ and $X\in\mathfrak{X}(M)$,
\begin{equation}\label{covLLeq}
\nabla^0_X=\frac{1}{2}(\pounds^s_X+\pounds_X)
\end{equation}
as elements in $\Der(\Upsilon^\bullet(M))$.
\end{proposition}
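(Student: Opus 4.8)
The plan is to observe that both sides of \eqref{covLLeq} are degree-$0$ derivations of the symmetric algebra $(\Upsilon^\bullet(M),\odot)$ and then to invoke the fact used repeatedly above that $\Upsilon^\bullet(M)$ is generated, as an algebra, by $\Upsilon^0(M)=\cCi(M)$ and $\Upsilon^1(M)=\Omega^1(M)$. Two derivations that agree on a generating set agree everywhere, so it suffices to verify \eqref{covLLeq} on functions and on $1$-forms. Before doing so I would check that each of the three operators really is such a derivation: covariant differentiation $\nabla^0_X$ along $X$ for the torsion-free connection $\nabla^0$ commutes with symmetrization and with contractions, hence restricts from the full tensor algebra to an element of $\Der_0(\Upsilon^\bullet(M))$; the usual Lie derivative $\pounds_X$ has the same property; and $\pounds^s_X=[\iota_X,\nabla^s]$ is a derivation by construction (Definition \ref{symLalg}).

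On functions the identity is immediate, since $\nabla^0_X f = Xf$, $\pounds_X f = Xf$, and $\pounds^s_X f = Xf$ (the last as noted after Definition \ref{symLalg}); thus $\tfrac12(\pounds^s_X+\pounds_X)f=Xf=\nabla^0_Xf$.

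The substance of the argument is the $1$-form case, which is nonetheless a short computation. Fixing $\alpha\in\Omega^1(M)$ and evaluating on an arbitrary $Y\in\mathfrak{X}(M)$, I would use the explicit formula \eqref{eq:formula-L-nabla-s} with $r=1$, that is $(\pounds^s_X\alpha)(Y)=(\nabla_X\alpha)(Y)-\alpha(\nabla_Y X)=X(\alpha(Y))-\alpha(\nabla_X Y+\nabla_Y X)$, together with the standard formula $(\pounds_X\alpha)(Y)=X(\alpha(Y))-\alpha([X,Y])$. Averaging these gives
\[
\tfrac12\big((\pounds^s_X+\pounds_X)\alpha\big)(Y)=X(\alpha(Y))-\tfrac12\,\alpha\big(\nabla_XY+\nabla_YX+[X,Y]\big),
\]
and the expression in parentheses is exactly $2\nabla^0_XY$ by \eqref{eq:assoc-tf-con}. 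Hence the right-hand side equals $X(\alpha(Y))-\alpha(\nabla^0_XY)=(\nabla^0_X\alpha)(Y)$, which is the left-hand side. The only genuine point requiring care is the derivation-and-generation reduction of the first paragraph (in particular that $\nabla^0_X$ descends to a derivation of $\odot$); once that is in place, the remaining content is the display above, so I do not anticipate a real obstacle beyond keeping the torsion-free symmetrization \eqref{eq:assoc-tf-con} correctly identified.
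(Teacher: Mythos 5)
Your proposal is correct and follows essentially the same route as the paper: both sides are checked to be derivations of $(\Upsilon^\bullet(M),\odot)$, the identity is reduced to functions and $1$-forms, and the $1$-form case is a short computation. The only (cosmetic) difference is that the paper verifies the $1$-form case at the operator level, writing $\tfrac12(\pounds_X+\pounds^s_X)\alpha=\iota_X(\ske\circ\nabla^0+\sym\circ\nabla^0)\alpha=\iota_X\nabla^0\alpha$ via Cartan's magic formula and the fact that $\nabla^s$ agrees with $\dif$ on functions, whereas you evaluate on a test vector field $Y$ and use \eqref{eq:assoc-tf-con} directly; both are valid for an arbitrary (not necessarily torsion-free) $\nabla$.
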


\begin{proof}
It is straightforward to check that the covariant and Lie derivatives define elements in $\Der(\Upsilon^\bullet(M))$. It is then enough to check the equality for a function $f\in\cCi(M)$ (which is trivial, as all the terms are $Xf$), and for a $1$-form $\alpha\in\Upsilon^1(M)$,
\begin{align*}
\frac{1}{2}(\pounds_X+\pounds^s_X)\,\alpha&=\frac{1}{2}(\iota_X\circ\dif+\dif\circ\iota_X+\iota_X\circ\nabla^s-\dif\circ\iota_X)\,\alpha\\
&=\iota_X(\ske\circ\nabla^0+\sym\circ\nabla^0)\,\alpha =\iota_X\nabla^0\alpha=\nabla^0_X\alpha,
\end{align*} 
where `$\ske$' denotes the usual skew-symmetric projection analogous to `$\sym$'.
\end{proof}

As a consequence, for a~torsion-free connection $\nabla$ and $\varphi\in\Upsilon^\bullet(M)$ such that $\nabla \varphi=0$, we have
\begin{equation}\label{eq: symLie and Lie}
\pounds_X^s\varphi=-\pounds_X\varphi.
\end{equation}
This applies, in particular, to a (pseudo-)Riemannian metric $\varphi=g$ and $\nabla=\lcn{g}$ (the Levi-Civita connection of $g$), which allows us to determine the symmetric derivative corresponding to $\lcn{g}$ in an elegant way.

\begin{proposition}\label{prop: nablasLC} 
 The symmetric derivative corresponding to the Levi-Civita connection
 $\lcn{g}$ of a (pseudo-)Riemannian metric $g$ is completely determined by
\begin{equation*}
\lcn{g}^s\alpha=\pounds_{g^{-1}\alpha}\,g
\end{equation*}
 for any $\alpha\in\Upsilon^1(M)$.
\end{proposition}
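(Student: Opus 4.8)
The plan is to deduce the formula from the already-established relation \eqref{eq: symLie and Lie} between the symmetric and the usual Lie derivatives, using crucially that the Levi-Civita connection is metric, so that $g$ is parallel. Since $\lcn{g}^s$ is a geometric derivation, it is uniquely determined by its restriction to $\Upsilon^1(M)$; this is the meaning of ``completely determined by'', and it reduces the statement to verifying the identity for an arbitrary $\alpha\in\Upsilon^1(M)$.

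First I would set $Y:=g^{-1}\alpha\in\mathfrak{X}(M)$ and note that $\alpha=\iota_Y g$, which follows from the derivation property of $\iota_Y$ on $\Upsilon^\bullet(M)$ giving $(\iota_Y g)(X)=g(Y,X)$ for all $X\in\mathfrak{X}(M)$. Then I would expand the symmetric Lie derivative $\pounds^s_Y=[\iota_Y,\lcn{g}^s]$ acting on $g$. As $g$ is parallel, $\lcn{g}^s g=3\sym(\lcn{g}g)=0$, and therefore
\begin{equation*}
\pounds^s_Y g=\iota_Y\lcn{g}^s g-\lcn{g}^s\iota_Y g=-\lcn{g}^s\alpha.
\end{equation*}

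Finally, because $\lcn{g}$ is torsion free and $g$ is parallel, relation \eqref{eq: symLie and Lie} gives $\pounds^s_Y g=-\pounds_Y g$, and combining this with the displayed identity yields $\lcn{g}^s\alpha=\pounds_Y g=\pounds_{g^{-1}\alpha}g$, as claimed. I do not expect a genuine obstacle: the whole conceptual content is carried by \eqref{eq: symLie and Lie}, and the only point requiring care is the bookkeeping of numerical factors in the contraction and symmetrization conventions, that is, confirming $\iota_Y g=g(Y,\cdot)$ with no spurious constant and that $\lcn{g}^s g$ indeed vanishes. The main task is thus to package the identity $\alpha=\iota_Y g$ correctly.
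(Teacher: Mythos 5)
Your proposal is correct and follows essentially the same route as the paper's own proof: write $\alpha=\iota_{g^{-1}\alpha}g$, use $\lcn{g}g=0$ to get $\lcn{g}^sg=0$, compute $\pounds^s_{g^{-1}\alpha}g=-\lcn{g}^s\alpha$, and conclude via \eqref{eq: symLie and Lie}. The bookkeeping you flag ($\iota_Yg=g(Y,\cdot)$ with no constant, and $\lcn{g}^sg=3\sym(\lcn{g}g)=0$) is indeed all that needs checking, and it checks out.
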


\begin{proof}
 Using the fact that $\lcn{g}g=0$, and hence $\lcn{g}^sg=0$, we find
 \begin{equation*}
 \lcn{g}^s\alpha=\lcn{g}^s(\iota_{g^{-1}\alpha}g)=-(\iota_{g^{-1}\alpha}\lcn{g}^s-\lcn{g}^s\iota_{g^{-1}\alpha})g=-L^s_{g^{-1}\alpha}g.
 \end{equation*}
 It follows from \eqref{eq: symLie and Lie} that $\lcn{g}^s\alpha=L_{g^{-1}\alpha}g$.
\end{proof}

\subsection{Geometric interpretation of the symmetric Lie derivative}\label{sec:geo-int-sym-Lie-der}

Given a vector field $X\in\mathfrak{X}(M)$, Lie and covariant derivatives at a point $m\in M$ have infinitesimal formulas in terms, respectively, of the local flow $\Psi^X$ and the parallel transport $P^\gamma$ along an integral curve $\gamma$ of $X$ starting at $m$. For a tensor field $\tau$, 
\begin{align}
(\pounds_X\tau)_m&=\lim_{t\to 0}\frac{1}{t}((\Psi^X_t)^*_m\tau_{\Psi^X_t(m)}-\tau_m)=\rest{\frac{\dif}{\dif t}}{t=0}(\Psi^X_t)^*_m\tau_{\Psi^X_t(m)}\label{Lieflow}\\
(\nabla_X\tau)_m&=\lim_{t\to 0}\frac{1}{t}(P^\gamma_{t,0}\tau_{\Psi^X_t(m)}-\tau_m)=\rest{\frac{\dif}{\dif t}}{t=0}P^\gamma_{t,0}\tau_{\Psi^X_t(m)}.\label{covflow}
\end{align}

The relation \eqref{covLLeq} gives us a recipe how to write a similar formula also for a symmetric Lie derivative.

\begin{proposition}\label{symLflow}
Let $\nabla$ be a connection on $M$. For all $\varphi\in\Upsilon^\bullet (M)$, $X\in\mathfrak{X}(M)$, and $m\in M$, there holds
\begin{equation*}
(\pounds^s_X\varphi)_m=\lim_{t\to 0}\frac{1}{t}(P^\gamma_{2t,0}(\Psi^X_{-t})_{\Psi^X_{2t}(m)}^*\varphi_{\Psi^X_t(m)}-\varphi_m)=\rest{\frac{\dif}{\dif t}}{t=0}P^\gamma_{2t,0}(\Psi^X_{-t})_{\Psi^X_{2t}(m)}^*\varphi_{\Psi^X_t(m)},
\end{equation*}
where $P^\gamma$ is the parallel transport with respect to $\nabla^0$ along the integral curve of $X$ starting at $m$.
\end{proposition}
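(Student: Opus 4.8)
The plan is to reduce the statement to the two infinitesimal formulas \eqref{Lieflow} and \eqref{covflow}, combined with the decomposition of the symmetric Lie derivative provided by Proposition \ref{covLL}. By linearity it suffices to treat a homogeneous $\varphi\in\Upsilon^r(M)$ (all the operations involved are degree preserving), so that the quantity being differentiated lives in the fixed vector space $\Sym^rT^*_mM$ and its derivative at $t=0$ is unambiguous. Since $\nabla^s$, and hence $\pounds^s_X=[\iota_X,\nabla^s]$, depends only on the associated torsion-free connection $\nabla^0$, and since \eqref{covLLeq} may be rewritten as $\pounds^s_X=2\nabla^0_X-\pounds_X$, it is enough to prove that the curve
\[
c(t):=P^\gamma_{2t,0}(\Psi^X_{-t})^*_{\gamma(2t)}\varphi_{\gamma(t)}\in\Sym^rT^*_mM,\qquad \gamma(t):=\Psi^X_t(m),
\]
satisfies $\dot c(0)=2(\nabla^0_X\varphi)_m-(\pounds_X\varphi)_m$.

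To disentangle the two distinct rates built into $c$ --- the ``flow time'' governing the point at which $\varphi$ is evaluated and the ``transport time'' governing $P^\gamma$ --- I would introduce the two-parameter family
\[
G(a,b):=P^\gamma_{a,0}(\Psi^X_{-b})^*_{\gamma(a)}\varphi_{\gamma(a-b)}\in\Sym^rT^*_mM,
\]
defined and smooth for $(a,b)$ near $(0,0)$. Here the fibers compose correctly: the map $(\Psi^X_{-b})^*_{\gamma(a)}$ sends a tensor at $\Psi^X_{-b}(\gamma(a))=\gamma(a-b)$ to one at $\gamma(a)$, and $P^\gamma_{a,0}$ then carries it to $m$. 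By construction $c(t)=G(2t,t)$, so the chain rule gives $\dot c(0)=2\,\partial_aG(0,0)+\partial_bG(0,0)$.

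The two partial derivatives are then computed by freezing one variable at a time. Setting $b=0$ makes the pullback trivial, $G(a,0)=P^\gamma_{a,0}\varphi_{\gamma(a)}$, so \eqref{covflow} applied to the connection $\nabla^0$ (whose parallel transport is the $P^\gamma$ of the statement) yields $\partial_aG(0,0)=(\nabla^0_X\varphi)_m$. Setting $a=0$ makes the transport trivial, $G(0,b)=(\Psi^X_{-b})^*_m\varphi_{\Psi^X_{-b}(m)}$, and the substitution $s=-b$ together with \eqref{Lieflow} yields $\partial_bG(0,0)=-(\pounds_X\varphi)_m$. Hence $\dot c(0)=2(\nabla^0_X\varphi)_m-(\pounds_X\varphi)_m=(\pounds^s_X\varphi)_m$ by Proposition \ref{covLL}, which is exactly the claim; the first equality in the statement is the same computation written as a difference quotient rather than a derivative.

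The only genuine care needed --- and thus the main (mild) obstacle --- is the bookkeeping ensuring that $G$ is well defined and smooth on a neighborhood of the origin, so that the chain rule is legitimate, together with the observation that the formulas \eqref{Lieflow} and \eqref{covflow}, stated for arbitrary tensor fields and connections, apply verbatim to the covariant tensor field $\varphi$ and to $\nabla^0$. Once $G$ is set up, the ``$2t$ versus $t$'' mismatch in $c$ is precisely what forces the coefficient $2$ on $\nabla^0_X$ and the coefficient $-1$ on $\pounds_X$, so no further computation is required.
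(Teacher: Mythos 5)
Your argument is correct and rests on exactly the same ingredients as the paper's proof, namely the decomposition $\pounds^s_X=2\nabla^0_X-\pounds_X$ from Proposition \ref{covLL} together with the infinitesimal formulas \eqref{Lieflow} and \eqref{covflow}, but you organize the limit differently. The paper adds and subtracts $P^\gamma_{2t,0}\varphi_{\Psi^X_{2t}(m)}$ inside the single difference quotient and then, using additivity of the flow, telescopes the remaining piece into $2\pounds_X-\pounds_X$; this requires only the two one-parameter limits and continuity of the transport maps. You instead package the same splitting as the chain rule applied to the two-parameter family $G(a,b)$ along the curve $(2t,t)$, which makes the coefficients $2$ and $-1$ appear immediately and removes the telescoping step entirely. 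The price is that the chain rule needs $G$ to be jointly differentiable at the origin, not merely to admit the two partial derivatives you compute: partial derivatives alone would not justify $\dot c(0)=2\,\partial_aG(0,0)+\partial_bG(0,0)$. This does hold, since $G$ depends smoothly on $(a,b)$ by smooth dependence of the flow and of parallel transport on all their parameters, but it is a genuine hypothesis that should be verified rather than only flagged. With that point filled in, your proof is complete, and the two-parameter formulation is arguably a cleaner way to see why the specific combination of flow for time $t$ and transport for time $2t$ produces precisely $2\nabla^0_X-\pounds_X$.
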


\begin{proof}
Starting from the right-hand side, we have
\begin{align*}
\lim_{t\to 0}&\frac{1}{t}(P^\gamma_{2t,0}(\Psi^X_{-t})_{\Psi^X_{2t}(m)}^*\varphi_{\Psi^X_t(m)}-\varphi_m)\\
&=\lim_{t\to 0}\frac{1}{t}(P^\gamma_{2t,0}(\Psi^X_{-t})_{\Psi^X_{2t}(m)}^*\varphi_{\Psi^X_t(m)}-P^\gamma_{2t,0}\varphi_{\Psi^X_{2t}(m)}+P^\gamma_{2t,0}\varphi_{\Psi^X_{2t}(m)}-\varphi_m)\\
&=2\lim_{t\to 0}\frac{1}{2t}(P^\gamma_{2t,0}\varphi_{\Psi^X_{2t}(m)}-\varphi_m)-\lim_{t\to 0}\frac{1}{t}P^\gamma_{2t,0}(\varphi_{\Psi^X_{2t}(m)}-(\Psi^X_{-t})_{\Psi^X_{2t}(m)}^*\varphi_{\Psi^X_t(m)}).
\end{align*}
From \eqref{covflow}, the first term is $2\,(\nabla^0_X\varphi)_m$. We focus on the second term, by additivity of the flow it equals
\begin{align*}
&\lim_{t\to 0}\frac{1}{t}P^\gamma_{2t,0}(\Psi^X_{-2t})^*_{\Psi^X_{2t}(m)}((\Psi^X_{2t})^*_{m}\varphi_{\Psi^X_{2t}(m)}-(\Psi^X_t)^*_{m}\varphi_{\Psi^X_t(m)})\\
&=\lim_{t\to 0}P^\gamma_{2t,0}(\Psi^X_{-2t})^*_{\Psi^X_{2t}(m)}\left(2\frac{1}{2t}((\Psi^X_{2t})^*_{m}\varphi_{\Psi^X_{2t}(m)}-\varphi_m)-\frac{1}{t}((\Psi^X_t)^*_{m}\varphi_{\Psi^X_t(m)}-\varphi_m)\right).
\end{align*}
That is, by the continuity and since $P^\gamma_{0,0}=\id=(\Psi^X_0)^*_{\Psi^X_0(m)}$,
\begin{equation*}
2\lim_{t\to 0}\frac{1}{2t}((\Psi^X_{2t})^*_{m}\varphi_{\Psi^X_{2t}(m)}-\varphi_m)-\lim_{t\to 0}\frac{1}{t}((\Psi^X_t)^*_{m}\varphi_{\Psi^X_t(m)}-\varphi_m).
\end{equation*}
Finally, using \eqref{Lieflow}, we have that the second term is
\begin{equation*}
2(\pounds_X\varphi)_m-(\pounds_X\varphi)_m=(\pounds_X\varphi)_m.
\end{equation*}
Adding the first and second terms and using \eqref{covLLeq} we get
\begin{equation*}
\lim_{t\to 0}\frac{1}{t}(P^\gamma_{2t,0}(\Psi^X_{-t})_{\Psi^X_{2t}(m)}^*\varphi_{\Psi^X_t(m)}-\varphi_m)=2(\nabla^0_X\varphi)_m-(\pounds_X\varphi)_m=(\pounds_X^s\varphi)_m.
\end{equation*}
\end{proof}

\begin{definition}\label{def: par-flow}
 Let $\nabla$ be a connection on $M$. For $X\in\mathfrak{X}(M)$ and $(t,m)\in \R\times M$ such that $\Psi^X_{2t}(m)$ is defined, we introduce the \textbf{parallel flow} along $X$ from time $t$ to $m$ as the map $\Theta^X_{t,m}\colon T_{\Psi^X_t(m)}M\rightarrow T_mM$, given by
 \begin{equation*}
 \Theta^X_{t,m}\coloneqq P^\gamma_{2t,0}\circ (\Psi^X_{-t})^*_{\Psi^X_{2t}(m)},
 \end{equation*}
 where $P^\gamma$ is the parallel transport with respect to $\nabla^0$ along the integral curve of $X$ starting at $m$.
\end{definition}

We can extend the definition of the parallel flow $\Theta^X_{t,m}$ to tensors of arbitrary type: it first moves the tensor forwards along an integral curve of a vector field $X$ from $\Psi^X_t(m)$ to $\Psi^X_{2t}(m)$ via the flow, and then it takes it backwards to $m$ by the parallel transport with respect to $\nabla^0$, see Figure \ref{UpsT}.

\begin{figure}[h]
\centering
\includegraphics[scale=1.25]{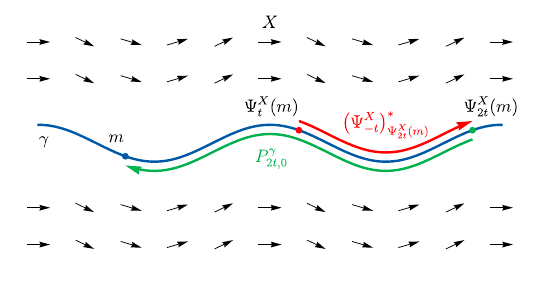}
\caption{Parallel flow.}\label{UpsT}
\end{figure}

\begin{remark}\label{remark: covLL}
 The interpretation in terms of the parallel flow of Proposition \ref{symLflow} makes it possible to define the symmetric Lie derivative for arbitrary tensor fields. Formula \eqref{covLLeq} is then satisfied in general.
\end{remark}

\subsection{The symmetric bracket}\label{sec:symmetric-bracket} By analogy with the formula $\iota_{[X,Y]}=[L_X,\iota_Y]_g$, the corresponding bracket $[\,\,,\,]_s$ in the symmetric setting should be determined by
\begin{equation}\label{eq:symmetric-as-derived-bracket}
\iota_{[X,Y]_s}=[L^s_X,\iota_Y]=\pounds_X^s\circ\iota_Y-\iota_Y\circ\pounds_X^s.\end{equation}
Since the right-hand side is a derivation of degree $(-1)$ and, by Lemma \ref{lem:-1-der-are-contractions}, any such derivation is the interior multiplication by a unique vector field, equation \eqref{eq:symmetric-as-derived-bracket} determines uniquely the symmetric bracket $[\,\,,\,]_s$. We can find an explicit expression as, for $\alpha\in\Upsilon^1(M)$,
\begin{align*}
\pounds^s_X\iota_Y\alpha-\iota_Y\pounds_X^s\alpha & =X\alpha(Y)-(\nabla_X\alpha)(Y)+\alpha(\nabla_YX)\\ & =\alpha(\nabla_XY+\nabla_YX)=\iota_{(\nabla_XY+\nabla_YX)}\alpha.
\end{align*}

\begin{definition}\label{def:sym-bracket} The \textbf{symmetric bracket} corresponding to a connection $\nabla$ on $M$ is given, for $X,Y\in\mathfrak{X}(M)$, by
\begin{equation}\label{SBex}
[X,Y]_s=\nabla_XY+\nabla_YX.
\end{equation}
\end{definition}

\begin{remark}
 To the best of our knowledge, the symmetric bracket was first introduced in \cite{CroGSST}, where the notation
 \begin{equation*}
 \la X:Y\ra=\nabla_XY+\nabla_YX
 \end{equation*}
 was used. The same notation was later adopted by A. Lewis in \cite{LewGISP,Lewbook,LewACD}.
\end{remark}

\begin{example}
Let $g$ be a (pseudo-)Riemannian metric. We have the following formula for the symmetric bracket corresponding to the Levi-Civita connection $\lcn{g}$:
\begin{equation}\label{eq:sym-bracket-LC}
[X,Y]_s=g^{-1}(L_X g(Y)+L_Yg(X))-\grad_g g(X,Y),
\end{equation}
where $\grad_g: \cCi(M)\rightarrow \mathfrak{X}(M)$ is the natural map given by $\grad_g\coloneqq g^{-1}\circ\dif$. 
\end{example}

\begin{remark}
Formula \eqref{eq:sym-bracket-LC} can be reinterpreted, by using $\vartheta\coloneqq g^{-1}$, as a symmetric bracket on the cotangent bundle: for $\alpha,\beta\in\Omega^1(M)$,
\begin{equation}\label{eq: symLA bracket}
[\alpha,\beta]_\vartheta\coloneqq L_{\vartheta(\alpha)}\beta+L_{\vartheta(\beta)}\alpha-\dif\vartheta(\alpha,\beta).
\end{equation}
The bracket in \eqref{eq: symLA bracket} can be defined for an arbitrary $\vartheta\in\mathfrak{X}^2_\text{sym}(M)$, possibly degenerate. It turns out to be a symmetric analogue of the Lie algebroid bracket on the cotangent bundle of a Poisson manifold $(M,\pi)$ (see, for instance, \cite{CouTLA}):
\begin{equation*}
[\alpha,\beta]_\pi\coloneqq L_{\pi(\alpha)}\beta-L_{\pi(\beta)}\alpha-\dif\pi(\alpha,\beta).
\end{equation*}
\end{remark}

Analogues of the usual identities for the Lie bracket, the exterior and Lie derivatives also hold here. On the one hand,
\begin{equation}\label{eq:sym-bracket-as-Lie-derivative}
\pounds_X^sY=[X,Y]_s=\pounds_Y^sX,
\end{equation}
which follows by applying $L_XY=\nabla^0_X Y - \nabla^0_Y X$ and \eqref{covLLeq}. On the other hand, from \eqref{eq:nabla-s} and \eqref{eq:formula-L-nabla-s}, we have, for $\varphi\in\Upsilon^r(M)$ and $X, X_j\in\mathfrak{X}(M)$,
\begin{align*}
(\nabla^s\varphi)(X_1\varlist X_{r+1})=\,&\sum_{j=1}^{r+1}X_j\,\varphi(X_1\varlist \hat{X}_j\varlist X_{r+1})\\
&-\sum_{\underset{i<j}{i,j=1,}}^{r+1}\varphi([X_i,X_j]_s,X_1\varlist \hat{X}_i\varlist \hat{X}_j\varlist X_{r+1}),\\
(\pounds^s_{X}\varphi)(X_1\varlist X_r)=&\,X\,\varphi(X_1\varlist X_k)-\sum_{j=1}^r\varphi([X,X_j]_s,X_1\varlist \hat{X}_j\varlist X_r).
\end{align*}

\subsection{Geometric interpretation of the symmetric bracket}\label{sec:geo-int-sym-bracket}
As a consequence of \eqref{eq:sym-bracket-as-Lie-derivative} and Proposition \ref{symLflow}, the symmetric bracket measures the invariance of a vector field with respect to the parallel flow, that is,
 \begin{equation}\label{eq: sym-br-par-flow}
 \rest{[X,Y]_s}{m}=\rest{\frac{\dif}{\dif t}}{t=0}\Theta^X_{t,m}Y_{\Psi^X_t(m)}
 \end{equation}
A different interpretation, in terms of a second derivative of commutator of flows, is given in \cite[Sec. 3]{LewGISP}.

We recall now an additional geometric interpretation of the symmetric bracket \cite{LewACD}, which describes the properties of involutive distributions.

For a connection $\nabla$ on $M$, a smooth distribution $\Delta\subseteq TM$ is $\nabla$-\textbf{geodesically invariant} if each $\nabla$-geodesic $\gamma \colon I\rightarrow M$ defined on some open interval $I\subseteq\mathbb{R}$ possesses the property that existence of $t_0\in I$ such that $\dot{\gamma}(t_0)\in \Delta_{\gamma(t_0)}$ implies that $\dot{\gamma}(t)\in \Delta_{\gamma(t)}$ for all $t\in I$. A submanifold $S\subseteq M$ is called \textbf{totally} $\nabla$-\textbf{geodesic} if every $\nabla$-geodesic that tangentially intersects $S$ is locally contained in $S$.

\begin{remark}
Note that if a geodesically invariant distribution is, in addition, integrable, it corresponds to a foliation by totally geodesic submanifolds.
\end{remark}

The geometric interpretation for the symmetric bracket is analogous to Frobenius theorem for integrable distributions.

\begin{theorem}[\cite{LewACD}]\label{LewisT}
For a connection $\nabla$ on $M$, a smooth distribution $\Delta\subseteq TM$ with locally constant rank is $\nabla$-geodesically invariant if and only if
\begin{equation*}
[\Gamma(\Delta),\Gamma(\Delta)]_{s}\subseteq \Gamma(\Delta).
\end{equation*}
\end{theorem}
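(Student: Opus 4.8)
The plan is to reformulate $\nabla$-geodesic invariance as a tangency condition for the geodesic spray $X_\nabla\in\mathfrak{X}(TM)$ and then read it off directly from Proposition \ref{prop: nablas polynomials}. Since both the geodesics and the symmetric bracket depend only on the symmetric part of $\nabla$, we may assume $\nabla$ is torsion free. Because $\Delta$ has locally constant rank $k$, its total space is an embedded submanifold of $TM$, and a curve $\gamma$ in $M$ is a geodesic precisely when its velocity $\dot\gamma$ is an integral curve of $X_\nabla$. Hence $\Delta$ is $\nabla$-geodesically invariant if and only if $X_\nabla$ is tangent to $\Delta\subseteq TM$ along $\Delta$.

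To make this explicit, I would work on an open set $U$ with a frame $\{e_1,\dots,e_n\}$ of $TM$ such that $\{e_1,\dots,e_k\}$ spans $\Delta$, with dual coframe $\{e^1,\dots,e^n\}$. Writing indices $\alpha,\beta\le k$ and $a>k$, the submanifold $\Delta\cap TU$ is cut out by the degree-$1$ polynomials in velocities $\widetilde{e^a}$, namely $\Delta\cap TU=\{v\in TU : \widetilde{e^a}(v)=0,\ a>k\}$, and these functions have pointwise independent differentials along the fibres since the $e^a$ are linearly independent. Therefore $X_\nabla$ is tangent to $\Delta$ exactly when $X_\nabla\widetilde{e^a}$ vanishes on $\Delta$ for every $a>k$. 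By Proposition \ref{prop: nablas polynomials} we have $X_\nabla\widetilde{e^a}=\widetilde{\nabla^s e^a}$, so for $v\in\Delta$ the tangency condition reads $(\nabla^s e^a)(v,v)=0$.

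Next I would polarize. For $v=v^\alpha e_\alpha\in\Delta$ one has $(\nabla^s e^a)(v,v)=v^\alpha v^\beta(\nabla^s e^a)(e_\alpha,e_\beta)$, so vanishing of this quadratic form on $\Delta$ is equivalent, by polarization, to $(\nabla^s e^a)(e_\alpha,e_\beta)=0$ for all $\alpha,\beta\le k$ and $a>k$. Using the explicit formula $(\nabla^s\alpha)(X,Y)=X\alpha(Y)+Y\alpha(X)-\alpha([X,Y]_s)$ from the proof of Lemma \ref{lem:sym-der-torsion-free}, together with $e^a(e_\alpha)=e^a(e_\beta)=0$ for $a>k$, the first two terms drop and we are left with $(\nabla^s e^a)(e_\alpha,e_\beta)=-e^a([e_\alpha,e_\beta]_s)$. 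Thus geodesic invariance is equivalent to $e^a([e_\alpha,e_\beta]_s)=0$ for all $a>k$, i.e.\ to $[e_\alpha,e_\beta]_s\in\Gamma(\Delta)$ on every frame pair. To pass from the frame to arbitrary sections $X=f^\alpha e_\alpha$, $Y=g^\beta e_\beta$ of $\Delta$, I would invoke the Leibniz rule $[X,gY]_s=g[X,Y]_s+(Xg)Y$ and its symmetric counterpart, which expand $[X,Y]_s$ into $f^\alpha g^\beta[e_\alpha,e_\beta]_s$ plus terms proportional to $e_\alpha$ and $e_\beta$; all of these lie in $\Gamma(\Delta)$, yielding $[\Gamma(\Delta),\Gamma(\Delta)]_s\subseteq\Gamma(\Delta)$.

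The main obstacle I anticipate is justifying cleanly the first reduction, namely the equivalence between geodesic invariance and tangency of $X_\nabla$ to $\Delta\subseteq TM$: one must use the locally constant rank to guarantee that $\Delta$ is an embedded submanifold, and argue that invariance of its points under the spray flow is genuinely equivalent to $X_\nabla$ being tangent to it at every point of $\Delta$. A naive direct argument — taking $X\in\Gamma(\Delta)$, running the geodesic with initial velocity $X(m)$ and differentiating — stumbles on the fact that $\nabla^0$-parallel transport need not preserve $\Delta$, so the covariant derivative along the geodesic of a $\Delta$-valued field is not obviously $\Delta$-valued; the spray reformulation, powered by Proposition \ref{prop: nablas polynomials}, is precisely what circumvents this difficulty.
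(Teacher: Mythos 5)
The paper does not actually prove this statement: it is imported verbatim from \cite{LewACD}, so there is no in-paper proof to compare against. Your argument is a correct, self-contained proof, and it is a nice one precisely because it runs entirely on the paper's own machinery: the dictionary between symmetric forms and polynomials in velocities together with Proposition \ref{prop: nablas polynomials} converts geodesic invariance (tangency of the spray $X_\nabla$ to the subbundle $\Delta\subseteq TM$) into the vanishing of $(\nabla^s e^a)(v,v)$ on $\Delta$, and polarization plus the identity $(\nabla^s\alpha)(X,Y)=X\alpha(Y)+Y\alpha(X)-\alpha([X,Y]_s)$ with $e^a(e_\alpha)=0$ reduces this to $e^a([e_\alpha,e_\beta]_s)=0$. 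This is in spirit the same spray-tangency characterization underlying Lewis's original proof, but phrased through the correspondence \eqref{eq: sym forms polynomials}, which makes the computation essentially one line. Two small points deserve to be made explicit. First, in the equivalence between geodesic invariance on all of $I$ and pointwise tangency of $X_\nabla$, the locally constant rank gives you that $\Delta$ is an embedded \emph{and closed} submanifold of $TM$; tangency then makes $\{t\in I: \dot\gamma(t)\in\Delta_{\gamma(t)}\}$ open (restrict $X_\nabla$ to $\Delta$ and use uniqueness of integral curves) while closedness of $\Delta$ makes it closed, so connectedness of $I$ finishes the global statement. Second, in the converse direction you pass from the hypothesis on global sections $\Gamma(\Delta)$ to the local frame fields $e_\alpha$; since $[\,\cdot\,,\cdot\,]_s$ is a first-order local operator, a bump-function extension of the $e_\alpha$ handles this, but it should be said. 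Neither point is a gap in the idea, only in the write-up.
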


In \cite{Lewbook}, the role of the bracket in theoretical mechanics is discussed.

\section{Commutation relations of symmetric Cartan calculus}\label{sec:commutation-relations}
The classical Cartan calculus on differential forms $\Omega^\bullet(M)$ is characterized by the graded-commutation relations between $\iota$, $\dif$ and $L$:
\begin{align*}
 [\iota_X,\iota_Y]_\text{g}&=0, &[\dif,\dif]_\text{g}&=0, & [\iota_X,\dif]_\text{g}&=L_X,\\
 [L_X,\iota_Y]_\text{g}&=\iota_{[X,Y]}, & [L_X,L_Y]_\text{g}&=L_{[X,Y]}, & [\dif, L_X]_\text{g}&=0.
\end{align*}

Just as graded derivations are closed under the graded commutator, derivations are closed under the commutator. Thus, for symmetric Cartan calculus on $\Upsilon^\bullet(M)$, we aim to have similar relations between $\iota$, $\nabla^s$ and $L^s$ and the usual commutator.

As $\Upsilon^\bullet(M)$ is the space of symmetric forms on $M$ we have
\begin{equation*}
 [\iota_X,\iota_Y]=0.
\end{equation*}
The analogue of the second relation
\begin{equation*}
 [\nabla^s, \nabla^s]=0
\end{equation*}
is also satisfied trivially because of the skew-symmetry of the commutator. The analogues of the third and fourth formulas were used to define the symmetric Lie derivative (Definition \ref{symLalg}) and derive the symmetric bracket \eqref{eq:symmetric-as-derived-bracket}, that is,
\begin{align*}
 &[\iota_X,\nabla^s]=L^s_X, & &[L^s_X,\iota_Y]=\iota_{[X,Y]_s}.
\end{align*}
For the remaining two identities, we will find that the analogues are more involved. In the case of $[L_X,L_Y]_\text{g}=L_{[X,Y]}$, it is clear that $[L^s_X,L^s_Y]\neq L^s_{[ X,Y]_s}$ since the left-hand side is skew-symmetric on $X$ and $Y$, whereas the right-hand side is symmetric. Their proof will require the extension of the contraction operator and the symmetric derivative (Sections \ref{sec:linear-algebra-2} and \ref{sec:extension-sym-derivative}).

\subsection{The symmetric contraction operator}\label{sec:linear-algebra-2}

Since this is a linear-algebraic operator, we first work on a vector space $V$ and then globalize it. Consider $\sigma\in\Sym^kV^*\otimes V$. The \textbf{symmetric contraction operator} \[\iota^s_\sigma\colon\Sym^\bullet V^*\rightarrow\Sym^\bullet V^*\] is the unique degree-$(k-1)$ derivation of $\Sym^\bullet V^*$ determined by its action on $\lambda\in\R$ and $\alpha\in V^*$ by
\begin{align*}
 &\iota^s_\sigma\lambda\coloneqq 0, & &\iota^s_\sigma \alpha\coloneqq \iota_\sigma \alpha,
\end{align*}
where by $\iota_\sigma$ we mean, for $\varphi\in\Sym^rV^*$,
\begin{equation*}
 (\iota_\sigma\varphi)(X_1\varlist X_{r+k-1})\coloneqq \varphi(\sigma(X_1\varlist X_k),X_{k+1}\varlist X_{r+k-1}).
\end{equation*}

We easily see that a generalization of Lemma \ref{lem:-1-der-are-contractions} is true. Namely, the map $\iota^s\colon \sigma\mapsto\iota^s_\sigma$ gives an isomorphism of graded vector spaces:
 \begin{equation*}
\Sym^\bullet V^*\otimes V\overset{\sim}{\longleftrightarrow}\left\{\begin{array}{c}
 \text{derivations of $\Sym^\bullet V^*$ that}\\
 \text{annihilate scalars $\Sym^0V^*=\R$}
 \end{array}\right\}.
\end{equation*}

\begin{lemma}\label{lem: symcont}
 Let $\varphi\in\Sym^r V^*$ and $\sigma\in\Sym^kV^*\otimes V$. The symmetric contraction operator is explicitly given by
\begin{equation*}
 \iota^s_\sigma\varphi=\frac{(r+k-1)!}{(r-1)!k!}\sym\iota_\sigma\varphi.
\end{equation*}
\end{lemma}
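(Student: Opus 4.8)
The plan is to prove the formula by verifying that both sides define the same degree-$(k-1)$ derivation of $\Sym^\bullet V^*$. Since the symmetric contraction operator $\iota^s_\sigma$ is, by definition, the \emph{unique} derivation determined by its action on scalars and on $V^*=\Sym^1 V^*$, it suffices to check that the right-hand side
\begin{equation*}
 D_\sigma \varphi := \frac{(r+k-1)!}{(r-1)!\,k!}\,\sym\iota_\sigma\varphi
\end{equation*}
(a) annihilates scalars, (b) agrees with $\iota_\sigma$ on $V^*$, and (c) is a derivation of $(\Sym^\bullet V^*, \odot)$. Claims (a) and (b) are immediate: on $\Sym^0 V^*=\R$ there is nothing to contract, and for $r=1$ the prefactor collapses to $\frac{k!}{0!\,k!}=1$ while $\iota_\sigma\alpha$ is already symmetric (it lands in $\Sym^{k-1}V^*$ after the tensorial contraction built into the definition of $\iota_\sigma$), so $\sym\iota_\sigma\alpha=\iota_\sigma\alpha$. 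The entire content is therefore in establishing the Leibniz rule (c).

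For (c), I would follow the same polarization strategy used in the proof of Lemma~\ref{lem:-1-der-are-contractions}. Using the shorthand $\varphi(Y)=\varphi(Y,\dots,Y)$, I would evaluate $(D_\sigma(\varphi\odot\psi))(Y)$ for $\varphi\in\Sym^r V^*$, $\psi\in\Sym^l V^*$, and a single vector $Y\in V$. Writing out $\iota_\sigma(\varphi\otimes\psi)$ and symmetrizing over $\mathcal{S}_{r+l+k-1}$, the key combinatorial step is to split the symmetric group according to whether the $k$ arguments fed into $\sigma$ land in the $\varphi$-slots or the $\psi$-slots. Unlike the degree-$(-1)$ case, here the contraction consumes $k$ inputs at once, so I expect the relevant split to be governed by how many of those $k$ slots come from each factor; but when everything is evaluated on the single repeated vector $Y$, only the two ``pure'' terms—where all $k$ contracted arguments come entirely from $\varphi$, or entirely from $\psi$—survive with the correct combinatorial weight, reproducing $(D_\sigma\varphi)(Y)\,\psi(Y)+\varphi(Y)\,(D_\sigma\psi)(Y)$. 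The polarization argument of the notation section then upgrades this identity on the diagonal to the full Leibniz rule.

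I expect the main obstacle to be the bookkeeping of the binomial factors: one must confirm that the normalization $\frac{(r+k-1)!}{(r-1)!\,k!}$ is exactly what makes the weights of the two surviving terms combine into the derivation property, analogous to how the cardinalities $n!\,r$ and $n!\,l$ entered in Lemma~\ref{lem:-1-der-are-contractions}. A cleaner alternative, which I would present if the direct count becomes unwieldy, is to avoid recomputing the Leibniz rule from scratch and instead argue structurally: one checks that $\sym\iota_\sigma$ composed with the stated scalar prefactor already satisfies (a) and (b), and then invokes the established isomorphism $\iota^s:\Sym^\bullet V^*\otimes V\xrightarrow{\sim}\{\text{derivations annihilating }\R\}$ to conclude $D_\sigma=\iota^s_\sigma$, \emph{provided} one first verifies $D_\sigma$ is a derivation. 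Since that verification is the crux either way, the honest core of the proof remains the diagonal computation together with the polarization principle, and I would organize the write-up around making that single evaluation on $Y$ transparent.
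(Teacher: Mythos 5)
Your proposal is correct and follows essentially the same route as the paper: the paper's proof likewise observes that the right-hand side is a degree-$(k-1)$ map agreeing with $\iota^s_\sigma$ on scalars and $1$-forms, and then verifies the derivation property by the same diagonal-evaluation and counting argument as in Lemma~\ref{lem:-1-der-are-contractions}. Your write-up actually spells out more of the combinatorial details than the paper, which simply cites the analogy with that earlier lemma.
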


\begin{proof}
The endomorphism on the right-hand side is clearly a degree-$(k-1)$ map and coincides with $\iota^s_\sigma$ on scalars and $1$-forms. Therefore, it is enough to check that it is a derivation of $\Sym^\bullet V^*$, which is done analogously to Lemma \ref{lem:-1-der-are-contractions}. \end{proof}

Globally, the symmetric contraction operator by an element of $\Upsilon^k(M, TM)$ (in the sense of Remark \ref{rem:notation-upsilon-bundle}) becomes an element of $\der_{k-1}(\Upsilon^\bullet(M))$.

\subsection{Commutator of symmetric Lie derivatives}

We can now deal with the analogue of the fifth identity.

\begin{proposition}\label{prop: com1}
 Let $\nabla$ be a torsion-free connection on $M$. For $X, Y\in\mathfrak{X}(M)$,
 \begin{equation}\label{eq: com1}
 [L^s_X, L^s_Y]=L^s_{[X,Y]}+\iota^s_{2(\nabla_{\nabla X}Y-\nabla_{\nabla Y}X-R_\nabla (X,Y))},
 \end{equation}
 where $R_\nabla $ is the Riemann curvature tensor.
\end{proposition}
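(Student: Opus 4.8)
The cleanest route is to evaluate both sides on an arbitrary symmetric form $\varphi\in\Upsilon^r(M)$ using the explicit pointwise formula \eqref{eq:formula-L-nabla-s} for $L^s_X$, and then compare. Since all operators appearing in \eqref{eq: com1} are derivations of $\Upsilon^\bullet(M)$ (the left-hand side as a commutator of derivations, $L^s_{[X,Y]}$ by definition, and $\iota^s_\sigma$ by the results of Section \ref{sec:linear-algebra-2}), it suffices to verify the identity on generators of the algebra, namely on functions $f\in\cCi(M)$ and on $1$-forms $\alpha\in\Upsilon^1(M)$. This reduces a potentially messy computation to two manageable cases.

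\textbf{The function case.} On $f\in\cCi(M)$ the symmetric contraction $\iota^s_\sigma f=0$ since $\sigma$ has degree $k=2\geq 1$, so I must check $[L^s_X,L^s_Y]f=L^s_{[X,Y]}f$. Using $L^s_Z f=Zf$ for any vector field $Z$, the left-hand side is $X(Yf)-Y(Xf)=[X,Y]f$, which matches. This case is immediate.

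\textbf{The one-form case.} This is where the curvature tensor must emerge. First I would expand, using \eqref{eq:formula-L-nabla-s} restricted to $1$-forms, the operator $L^s_X$ acting on $\alpha\in\Upsilon^1(M)$ as $(L^s_X\alpha)(Z)=(\nabla_X\alpha)(Z)-\alpha(\nabla_Z X)$. Applying $L^s_Y$ and then $L^s_X$ and antisymmetrizing, I would collect the terms into three groups: those that reassemble as $(L^s_{[X,Y]}\alpha)(Z)$, those involving $\alpha$ contracted against second covariant derivatives of $X$ and $Y$ (which become $\nabla_{\nabla X}Y$ and $\nabla_{\nabla Y}X$ after using torsion-freeness to commute $\nabla_Z(\nabla_W X)$-type terms), and those that combine into the curvature expression. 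Concretely, the antisymmetrized second-derivative terms $\nabla_X\nabla_Y X - \nabla_Y\nabla_X X$ acting through $\alpha$ should produce, via the definition $R_\nabla(X,Y)=\nabla_X\nabla_Y-\nabla_Y\nabla_X-\nabla_{[X,Y]}$, exactly the $R_\nabla(X,Y)$ contribution, while the $\nabla_{[X,Y]}$ piece is absorbed into $L^s_{[X,Y]}$.

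\textbf{The main obstacle.} The hard part is the careful bookkeeping in the one-form case: tracking how each term of the double application of \eqref{eq:formula-L-nabla-s} distributes, correctly identifying which combination is symmetric (and hence belongs to $\iota^s_\sigma\alpha=\iota_\sigma\alpha$ for degree reasons, since on a $1$-form the symmetrization in Lemma \ref{lem: symcont} is trivial) versus which reassembles into $L^s_{[X,Y]}$, and invoking torsion-freeness $\nabla_X Y-\nabla_Y X=[X,Y]$ at precisely the right moments to convert covariant-derivative differences into Lie brackets. The factor of $2$ and the precise form $2(\nabla_{\nabla X}Y-\nabla_{\nabla Y}X-R_\nabla(X,Y))$ of the contracting tensor will be the payoff of this computation; once the $1$-form identity is checked, polarization and the derivation property extend it to all of $\Upsilon^\bullet(M)$, completing the proof.
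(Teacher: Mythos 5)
Your overall strategy is exactly the paper's: both sides are derivations of $\Upsilon^\bullet(M)$, so it suffices to check the identity on $\cCi(M)$ and on $\Upsilon^1(M)$; the function case is immediate since $\iota^s_\sigma$ kills functions and $L^s_Zf=Zf$; and the $1$-form case is a direct expansion using torsion-freeness and the definition of $R_\nabla$. The only difference of presentation is that the paper first reduces everything to a single vector field $W=\pg{\pg{X,Z}_s,Y}_s-\pg{X,\pg{Y,Z}_s}_s+\pg{[X,Y],Z}_s$ via the symmetric bracket and only then unpacks the covariant derivatives, whereas you propose expanding $(L^s_X\alpha)(Z)=(\nabla_X\alpha)(Z)-\alpha(\nabla_ZX)$ from the start; these are the same computation organized differently.

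There is, however, one concrete ingredient your plan omits and partially misdescribes: the algebraic (first) Bianchi identity. Your sketch predicts that ``antisymmetrized second-derivative terms'' produce ``exactly the $R_\nabla(X,Y)$ contribution'' straight from the definition of curvature. What the expansion actually yields (after using torsion-freeness) is
\begin{equation*}
 W=-R_{\nabla}(X,Y)Z+R_{\nabla}(Y,Z)X+R_{\nabla}(Z,X)Y+2(\nabla_{\nabla_ZX}Y-\nabla_{\nabla_ZY}X),
\end{equation*}
i.e.\ three curvature terms with the test vector $Z$ appearing in different slots. Only after applying $R_\nabla(X,Y)Z+R_\nabla(Y,Z)X+R_\nabla(Z,X)Y=0$ do the three collapse to $-2R_\nabla(X,Y)Z$, and this is precisely where the factor of $2$ on the curvature term in \eqref{eq: com1} comes from. (The expression ``$\nabla_X\nabla_YX-\nabla_Y\nabla_XX$'' in your write-up is also not one that arises; the relevant second derivatives are of the form $\nabla_Y\nabla_ZX$, $\nabla_X\nabla_ZY$, etc.) Without invoking the Bianchi identity the computation stalls at the displayed form of $W$, so you should add this step explicitly before claiming the stated contracting tensor.
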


\begin{proof}
 As all the terms are derivations of $\Upsilon^\bullet (M)$, it is enough to check the equality on functions and $1$-forms. Functions are annihilated by any symmetric contraction operator, so the relation is clearly satisfied as $L^s_Xf=Xf$. For an arbitrary $1$-form $\alpha\in\Upsilon^1(M)$ we have
 \begin{align*}
 ([L^s_X,L^s_Y&]\alpha)(Z)-(L^s_{[X,Y]}\alpha)(Z)\\
 =\,&X(L^s_Y\alpha)(Z)-(L^s_Y\alpha)(\pg{X,Z}_s)-Y(L^s_X\alpha)(Z)+(L^s_X\alpha)(\pg{Y,Z}_s)\\
 &-[X,Y]\alpha(Z)+\alpha(\pg{[X,Y],Z}_s)\\
 =\,&X(Y\alpha(Z))-X\alpha(\pg{Y,Z}_s)-Y\alpha(\pg{X,Z}_s)+\alpha(\pg{Y,\pg{X,Z}_s}_s)\\
 &-Y(X\alpha(Z))+Y\alpha(\pg{X,Z}_s)+X\alpha(\pg{Y,Z}_s)-\alpha(\pg{X,\pg{Y,Z}_s}_s)\\
 &-[X,Y]\alpha(Z)+\alpha(\pg{[X,Y],Z}_s)\\
 =\,&\alpha(W),
 \end{align*}
 where $W\in\mathfrak{X}(M)$ is given by $W\coloneqq \pg{\pg{X,Z}_s,Y}_s-\pg{X,\pg{Y,Z}_s}_s+\pg{[X,Y],Z}_s$.
 Explicitly,
 \begin{align*}
W=&\nabla_Y\nabla_XZ+\nabla_Y\nabla_ZX+\nabla_{\nabla_XZ+\nabla_ZX}Y-\nabla_X\nabla_YZ\\& -\nabla_X\nabla_ZY -\nabla_{\nabla_YZ+\nabla_ZY}X
 +\nabla_{[X,Y]}Z+\nabla_Z[X,Y].
 \end{align*}
 Using the definition of the curvature tensor and the torsion-freeness of $\nabla$, we get
 \begin{equation*}
 W=-R_\nabla (X,Y)Z+R_\nabla (Y,Z)X+R_\nabla (Z,X)Y+2(\nabla_{\nabla_ZX}Y-\nabla_{\nabla_ZY}X).
 \end{equation*}
 Finally, it follows from the algebraic Bianchi identity that
 \begin{equation*}
 ([L^s_X,L^s_Y]\alpha)(Z)-(L^s_{[X,Y]}\alpha)(Z)=\alpha(2(\nabla_{\nabla_ZX}Y-\nabla_{\nabla_ZY}X -R_\nabla (X,Y)Z)).
 \end{equation*}
\end{proof}

\subsection{Extension of the symmetric derivative and the symmetric curvature}\label{sec:extension-sym-derivative}

Similarly as we generalized the contraction operator, we generalize also the symmetric derivative.

\begin{definition}
Let $\nabla$ be a connection on $M$ and $A\in\Gamma(\en TM)$. We introduce the $A$-\textbf{symmetric derivative} of $\nabla^s_A\in\der_1(\Upsilon^\bullet(M))$ by the formula
\begin{equation*}
 \nabla^s_A\coloneqq [\iota^s_A,\nabla^s].
 \end{equation*}
\end{definition}

\begin{remark}\label{rem: nablas_id}
 Note that $\iota^s_{\id}\varphi=r\,\varphi$ for every $\varphi\in \Upsilon^r(M)$. Therefore, $\nabla^s=\nabla^s_{\id_{TM}}$.
\end{remark}


The last piece to be introduced is a symmetric analogue of the curvature operator, whose definition will rely on the concept of \textbf{second covariant derivative}. This is, for a connection $\nabla$ on $M$, the linear map $\nabla^2\colon \mathfrak{X}(M)\rightarrow\Gamma(\otimes^2T^*M\otimes TM)$ defined by
\begin{equation*}
 (\nabla^2X)(Y,Z)\coloneqq (\nabla\nabla X)(Y,Z)=\nabla_Y\nabla_ZX-\nabla_{\nabla_YZ}X.
\end{equation*}
Note that the map is not $\cCi(M)$-linear. However, when we take its skew-symmetric part and the connection is torsion-free, it is and we recover the Riemann curvature tensor $R_\nabla \in\Omega^2(M,\en TM)$:
\begin{equation*}
 (2\ske\nabla^2X)(Y,Z)=\nabla_Y\nabla_ZX-\nabla_Z\nabla_YX-\nabla_{[Y,Z]}X=R_\nabla (Y,Z)X.
\end{equation*}

\begin{definition}\label{def: sym-curv}
 Let $\nabla$ be a connection on $M$. We introduce the \textbf{symmetric curvature operator} $R^s_\nabla \colon \mathfrak{X}(M)\rightarrow\Upsilon^2(M, TM)$ by the formula
 \begin{equation*}
 (R^s_\nabla X)(Y,Z)\coloneqq (2\sym \nabla^2X)(Y,Z)= \nabla_Y\nabla_ZX+\nabla_Z\nabla_YX-\nabla_{\pg{Y,Z}_s}X.
 \end{equation*}
\end{definition}

\begin{remark}\label{rem: curvature and sym-curvature}
 If $\nabla$ is a torsion-free connection on $M$, we can express the second covariant derivative in terms of the corresponding Riemannian curvature tensor and symmetric curvatures operator:
 \begin{equation*}
 (\nabla^2X)(Y,Z)=\frac{1}{2}(R_\nabla (Y,Z)X+(R^s_\nabla X)(Y,Z)).
 \end{equation*}
\end{remark}

A geometric interpretation of the symmetric curvature operator will be given in Remark \ref{rem:geo-meaning-sym-curv}.

\subsection{Commutator of symmetric and symmetric Lie derivatives}

We prove now the remaining identity of symmetric Cartan calculus.

\begin{proposition}\label{prop:commutator-sym-der-sym-Lie}
 Let $\nabla$ be a torsion-free connection. For $X\in\mathfrak{X}(M)$,
 \begin{equation}\label{eq: com2}
 [\nabla^s,L^s_X]=2\nabla^s_{\nabla X}+\iota^s_{2\sym \iota_XR_\nabla +R^s_\nabla X}.
 \end{equation}
\end{proposition}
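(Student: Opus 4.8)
The plan is to exploit that both sides of \eqref{eq: com2} are degree-$1$ derivations of $\Upsilon^\bullet(M)$: the left-hand side is a commutator of derivations, while on the right $\nabla^s_{\nabla X}=[\iota^s_{\nabla X},\nabla^s]$ and $\iota^s_{2\sym\iota_XR_\nabla+R^s_\nabla X}$ are derivations by Lemma \ref{lem: symcont} and the definition of the $A$-symmetric derivative. Since a derivation of $\Upsilon^\bullet(M)$ is determined by its action on the generators $\cCi(M)$ and $\Omega^1(M)$, it suffices to check the identity there.

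On functions the check is immediate. Every symmetric contraction operator of positive degree annihilates $\cCi(M)$, so on $f$ the right-hand side reduces to $2\nabla^s_{\nabla X}f=2\iota^s_{\nabla X}\dif f$, whose value on $Y$ is $2\,\dif f(\nabla_Y X)$. For the left-hand side I would expand $[\nabla^s,L^s_X]f=\dif(Xf)-L^s_X\dif f$ via \eqref{eq:formula-L-nabla-s}; using the torsion-freeness of $\nabla$ to replace $[X,Y]f$ by $(\nabla_X Y-\nabla_Y X)f$ yields the same $1$-form $Y\mapsto 2\,\dif f(\nabla_Y X)$.

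The substantial part is the verification on a $1$-form $\alpha\in\Omega^1(M)$, where both sides are symmetric $2$-tensors; by polarization it is enough to evaluate on the diagonal $(Z,Z)$, which roughly halves the number of terms. I would expand $[\nabla^s,L^s_X]\alpha=\nabla^s(L^s_X\alpha)-L^s_X(\nabla^s\alpha)$ with \eqref{eq:nabla-s}--\eqref{eq:formula-L-nabla-s}, and the right-hand side by first computing, from Definition \ref{def: sym-curv}, that $(2\sym\iota_XR_\nabla+R^s_\nabla X)(Z,Z)=2R_\nabla(X,Z)Z+2\nabla_Z\nabla_Z X-2\nabla_{\nabla_Z Z}X$, and then computing $2\nabla^s_{\nabla X}\alpha$ from $\nabla^s_{\nabla X}=[\iota^s_{\nabla X},\nabla^s]$. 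Repeated use of the Leibniz rule $W(\alpha(Z))=(\nabla_W\alpha)(Z)+\alpha(\nabla_W Z)$ generates many terms, the bulk of which cancel in pairs.

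The one genuinely delicate step is recognizing the curvature. After the cancellations, the left-hand side retains the combination $\alpha(\nabla_X\nabla_Z Z-\nabla_Z\nabla_X Z)$, which I would rewrite, using the definition of $R_\nabla$ and torsion-freeness $[X,Z]=\nabla_X Z-\nabla_Z X$, as $\alpha(R_\nabla(X,Z)Z)+\alpha(\nabla_{\nabla_X Z}Z)-\alpha(\nabla_{\nabla_Z X}Z)$; its two non-curvature terms then cancel against the surviving first-order contributions. Both sides collapse to
\[
4(\nabla_{\nabla_Z X}\alpha)(Z)+2\,\alpha(R_\nabla(X,Z)Z)-2\,\alpha(\nabla_Z\nabla_Z X)+2\,\alpha(\nabla_{\nabla_Z Z}X),
\]
which proves the claim. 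The bookkeeping of these cancellations is the main obstacle; conceptually it is governed by the splitting of the second covariant derivative into its curvature and symmetric-curvature parts (Remark \ref{rem: curvature and sym-curvature}), which is exactly what singles out $2\sym\iota_XR_\nabla+R^s_\nabla X$ as the correct coefficient. (One could alternatively substitute $L^s_X=2\nabla_X-L_X$ from Proposition \ref{covLL} to reorganize the computation, but the direct expansion above already closes it.)
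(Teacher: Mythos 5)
Your proposal is correct and follows essentially the same route as the paper's proof: both arguments reduce the identity to a check on $\cCi(M)$ and $\Upsilon^1(M)$ because all operators involved are derivations, evaluate on the diagonal, use torsion-freeness to make the curvature and symmetric-curvature terms appear, and conclude by polarization. The only cosmetic difference is that the paper computes the single quantity $([\nabla^s,L^s_X]\alpha)(Y,Y)-2(\nabla^s_{\nabla X}\alpha)(Y,Y)$ and identifies it as $\alpha\bigl((2\sym\iota_XR_\nabla+R^s_\nabla X)(Y,Y)\bigr)$, whereas you expand both sides to a common value; the common value you state is indeed what both sides reduce to.
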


\begin{proof}
 Analogously as in Proposition \ref{prop: com1}, it is enough to check the relation on $\cCi(M)$ and $\Upsilon^1(M)$. The relation \eqref{eq: com2} restricted on functions says
 \begin{equation*}
 2\dif\circ\iota_X\circ\dif-\iota_X\circ\nabla^s\circ\dif=2\iota_{\nabla X}\circ\dif.
 \end{equation*}
 The left-hand side acting on $f\in \cCi(M)$ and evaluated on $Y\in\mathfrak{X}(M)$ gives
 \begin{align*}
 2Y(Xf)-(\nabla^s\dif f)(X,Y)
 &=Y(Xf)-X(Yf)+[X,Y]_sf\\ &=([X,Y]+[X,Y]_s)f
 =2(\nabla_YX)f=2(\iota_{\nabla X}\dif f)(Y),
 \end{align*}
 where the second equality follows from torsion-freeness of $\nabla$. For $\alpha\in\Upsilon^1(M)$,
 \begin{align*}
 ([\nabla^s&,L^s_X]\alpha)(Y,Y)-2(\nabla^s_{\nabla X}\alpha)(Y,Y)\\
=\,&(\nabla^sL^s_X\alpha)(Y,Y)-(L^s_X\nabla^s\alpha)(Y,Y)-2(\iota^s_{\nabla X}\nabla^s\alpha)(Y,Y)+2(\nabla^s\iota_{\nabla X}\alpha)(Y,Y)\\
 =\,&2Y(L^s_X\alpha)(Y)-(L^s_X\alpha)(\pg{Y,Y}_s)-X(\nabla^s\alpha)(Y,Y)+2(\nabla^s\alpha)([X,Y]_s,Y)\\
 &-4(\nabla^s\alpha)(\nabla_YX,Y)+4Y\alpha(\nabla_YX)-2\alpha(\nabla_{\pg{Y,Y}_s}X)\\
 =\,&2Y(X\alpha(Y))-2Y\alpha([X,Y]_s)-X\alpha(\pg{Y,Y}_s)+\alpha(\pg{X,\pg{Y,Y}_s}_s)\\
 &-2X(Y\alpha(Y))+X\alpha(\pg{Y,Y}_s)+2[X,Y]_s\alpha(Y)+2Y\alpha([X,Y]_s)\\
 &-2\alpha(\pg{[X,Y]_s,Y}_s)-4(\nabla_YX)\alpha(Y)-4Y\alpha(\nabla_YX)+4\alpha(\pg{\nabla_YX,Y}_s)\\
 &+4Y\alpha(\nabla_YX)-2\alpha(\nabla_{\pg{Y,Y}_s}X).
 \end{align*}
 Using the fact that $\nabla$ is torsion-free we get 
\begin{equation*}
 ([\nabla^s,L^s_X]\alpha)(Y,Y)-2(\nabla^s_{\nabla X}\alpha)(Y,Y)=\alpha(W),
\end{equation*}
where $W\in\mathfrak{X}(M)$ is given by
\begin{align*}
 W&= \pg{X,\pg{Y,Y}_s}_s-2\pg{[X,Y]_s,Y}_s+4\pg{\nabla_YX,Y}_s-2\nabla_{\pg{Y,Y}_s}X)\\
 &=2\nabla_X\nabla_YY+2\nabla_{\nabla_YY}X-2\nabla_{\nabla_XY+\nabla_YX}Y-2\nabla_Y\nabla_XY
 \\ & \phantom{=} -2\nabla_Y\nabla_YX
+4\nabla_{\nabla_YX}Y+4\nabla_Y\nabla_YX-4\nabla_{\nabla_YY}X.
\end{align*}
 It follows from the torsion-freeness of $\nabla$ that $W$ may be rewritten in terms of the Riemann curvature tensor and the symmetric curvature operator:
 \begin{equation*}
 W=2R_\nabla (X,Y)Y+(R^s_\nabla X)(Y,Y)=(2\sym\iota_X R_\nabla +R^s_\nabla X)(Y,Y).
 \end{equation*}
 Finally, it follows from polarization that
 \begin{equation*}
 ([\nabla^s,L^s_X]\alpha)(Y,Z)-2(\nabla^s_{\nabla X}\alpha)(Y,Z)=\alpha((2\sym\iota_X R_\nabla +R^s_\nabla X)(Y,Z)).
 \end{equation*}
\end{proof}

Identities \eqref{eq: com1} and \eqref{eq: com2} involve rather complicated terms. We will be able to say more about them once we have described affine morphisms in Section \ref{sec:affine-manifold-morphism}.

\subsection{Dependence on the connection}

The symmetric contraction operator introduced in Section \ref{sec:linear-algebra-2} allows us to describe how symmetric Cartan calculus varies when we change the connection.

\begin{proposition}\label{prop: symCartan variation}
Let $\nabla$ and $\nabla'$ be two torsion-free connections on $M$, then
\begin{equation*}
 \nabla'_XY=\nabla_XY-\frac{1}{2}\sigma(X,Y)
 \end{equation*}
 for a unique $\sigma\in\Upsilon^2(M,TM)$. The corresponding symmetric derivatives, symmetric Lie derivatives and symmetric brackets are related as follows:
 \begin{align*}
 &\nabla'^s=\nabla^s+\iota^s_\sigma, & &L'^s_X=L^s_X+\iota^s_{\sigma(X,\,\,)}, & &\pg{X,Y}'_s=[X,Y]_s-\sigma(X,Y).
 \end{align*} 
\end{proposition}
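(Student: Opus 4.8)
The plan is to establish the three identities by reducing everything to generators of the algebra $\Upsilon^\bullet(M)$, exactly as in the preceding propositions. Since $\nabla^s$, $\nabla'^s$, $\iota^s_\sigma$, $L^s_X$, and $\iota^s_{\sigma(X,\,\,)}$ are all derivations of $\Upsilon^\bullet(M)$, and since two derivations coincide as soon as they agree on functions $\cCi(M)$ and on $1$-forms $\Upsilon^1(M)$, I only need to verify each relation on these two types of generators. The existence and uniqueness of $\sigma\in\Upsilon^2(M,TM)$ follows immediately from Proposition \ref{prop:corr-torsion-free-geo-der}: the difference of two torsion-free connections is a symmetric $(1,2)$-tensor, and the factor $-\tfrac{1}{2}$ is just a convenient normalization chosen so the formulas below come out cleanly.

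First I would treat the symmetric derivative. On a function $f$, all three operators $\nabla'^s$, $\nabla^s$, $\iota^s_\sigma$ act via $\dif f$ or annihilate $f$ (since $\iota^s_\sigma$ kills scalars and, more relevantly here, $\nabla^s f=\nabla'^s f=\dif f$), so the relation $\nabla'^s=\nabla^s+\iota^s_\sigma$ holds trivially on $\cCi(M)$ because $\iota^s_\sigma\dif f$ must vanish on the function level—more precisely I evaluate on a $1$-form. For $\alpha\in\Upsilon^1(M)$, I use the explicit formula from Lemma \ref{lem:sym-der-torsion-free},
\begin{equation*}
(\nabla'^s\alpha)(X,Y)=X\alpha(Y)+Y\alpha(X)-\alpha(\nabla'_XY+\nabla'_YX),
\end{equation*}
and substitute $\nabla'_XY+\nabla'_YX=\nabla_XY+\nabla_YX-\sigma(X,Y)$ (using the symmetry of $\sigma$). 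The correction term is $\alpha(\sigma(X,Y))=(\iota_\sigma\alpha)(X,Y)$, which by Lemma \ref{lem: symcont} equals $(\iota^s_\sigma\alpha)(X,Y)$ on a $1$-form (the combinatorial factor $\tfrac{(r+k-1)!}{(r-1)!k!}$ is $1$ when $r=1$, $k=2$), yielding exactly $\nabla'^s\alpha=\nabla^s\alpha+\iota^s_\sigma\alpha$.

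Next, the symmetric bracket relation $\pg{X,Y}'_s=[X,Y]_s-\sigma(X,Y)$ is the most direct: by Definition \ref{def:sym-bracket}, $\pg{X,Y}'_s=\nabla'_XY+\nabla'_YX$, and substituting the connection difference gives $\nabla_XY+\nabla_YX-\sigma(X,Y)=[X,Y]_s-\sigma(X,Y)$ at once. For the symmetric Lie derivative, I would use the defining formula $L^s_X=[\iota_X,\nabla^s]$ together with the already-established relation $\nabla'^s=\nabla^s+\iota^s_\sigma$, so that $L'^s_X=[\iota_X,\nabla'^s]=[\iota_X,\nabla^s]+[\iota_X,\iota^s_\sigma]=L^s_X+[\iota_X,\iota^s_\sigma]$. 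The remaining task is to identify the commutator $[\iota_X,\iota^s_\sigma]$ with $\iota^s_{\sigma(X,\,\,)}$; since both sides are degree-$0$ derivations annihilating scalars, by the generalized Lemma \ref{lem:-1-der-are-contractions} it suffices to compare their action on a $1$-form $\alpha$, where a short contraction computation gives $(\iota_X\iota^s_\sigma-\iota^s_\sigma\iota_X)\alpha=\alpha(\sigma(X,\,\,))=\iota^s_{\sigma(X,\,\,)}\alpha$. The only mild subtlety—and the step I expect to demand the most care—is keeping the combinatorial normalizations of $\iota^s$ (from Lemma \ref{lem: symcont}) and the factor of $2$ in $\nabla^s=2\sym\circ\nabla$ consistent, so that the stated coefficient $-\tfrac12$ in the connection difference produces the clean $+\iota^s_\sigma$ with coefficient exactly $1$; everything else is a routine verification on generators.
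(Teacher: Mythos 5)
Your proof is correct and follows essentially the same route as the paper: the bracket identity is immediate from the definition, and the other two relations are verified on the generators $\cCi(M)$ and $\Upsilon^1(M)$, which suffices because all operators involved are derivations of $\Upsilon^\bullet(M)$. (One wording slip: on functions the relevant fact is that $\iota^s_\sigma f=0$, not that ``$\iota^s_\sigma\dif f$ vanishes'' --- your parenthetical already gives the right reason, and the rest, including the identification $[\iota_X,\iota^s_\sigma]=\iota^s_{\sigma(X,\,\,)}$ and the combinatorial factor being $1$, checks out.)
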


\begin{proof}
 The statement for symmetric brackets is trivial. It is enough to check the other two relations on functions and $1$-forms. The equalities on functions follow from that every symmetric contraction operator annihilate functions, every symmetric derivative is geometric and $L^s_Xf=Xf=L'^s_Xf$. The equalities on $1$-forms follow by straightforward calculations provided the relation between symmetric brackets.
\end{proof}


\section{Affine manifold morphisms}\label{sec:affine-manifold-morphism}

We study now affine morphisms, which are the natural transformations in presence of a connection. We will see how they interact with symmetric Cartan calculus and also their infinitesimal version. Some of the results of this section are stated for arbitrary connections since they provide stronger versions.

\subsection{Definition of affine morphisms}
Given two manifolds $M$ and $M'$, any diffeomorphism $\phi\colon M\rightarrow M'$ intertwines their Cartan calculi in the sense that,
\begin{align*}
 \phi_*[X,Y]&=[\phi_*X,\phi_*Y],& \phi^*\circ\dif&=\dif\circ\phi^*,& \phi^*\circ\pounds_{\phi_*X}&=\pounds_X\circ\phi^*.
\end{align*}

For symmetric Cartan calculus, the notion of affine diffeomorphism will play an analogous role. In this section, connections are not necessarily torsion-free.
\begin{definition}\label{def: affine}
Given two manifolds with connection $(M,\nabla)$ and $(M',\nabla')$, a smooth map $\phi\colon M\rightarrow M'$ is an \textbf{affine morphism} if $\phi_*$ commutes with the corresponding parallel transports $P$ and $P'$, that is,
\begin{equation*}
\phi_{*\gamma(t_1)}\circ P^\gamma_{t_0,t_1}=P'^{\phi\circ\gamma}_{t_0,t_1}\circ \phi_{*\gamma(t_0)},
\end{equation*}
for any curve $\gamma\colon I\rightarrow M$ and $t_0, t_1\in I$. If, in addition, $\phi$ is a diffeomorphism, we call it \textbf{affine diffeomorphism}. We denote the space of affine diffeomorphisms by $\Aff(M,\nabla)$ if $M=M'$ and $\nabla=\nabla'$.
\end{definition}

It is well known that the group of isometries of a (pseudo-)Riemannian metric is a Lie group of dimension at most $\frac{1}{2}n(n+1)$, where $n\coloneqq \dim M$. A similar result is true also for the group $\Aff(M,\nabla)$.

\begin{theorem}[\cite{KobTGDG}]
 Let $\nabla$ be a connection on $M$ of dimension $n\in\mathbb{N}$. The group $\Aff(M,\nabla)$ is a Lie group of dimension at most $n(n+1)$.
\end{theorem}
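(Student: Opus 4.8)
The plan is to bound the dimension of $\aff(\nabla)$ by showing that an affine diffeomorphism $\phi\in\aff(\nabla)$ is completely determined by a finite amount of jet data at a single point, and then to count the dimension of the space holding that data. Concretely, I would fix a point $m\in M$ and associate to each $\phi$ the pair $(\phi(m), \phi_{*m})$, an element of $M\times \mathrm{Iso}(T_mM, T_{\phi(m)}M)$. The core claim is that this assignment is injective: if two affine diffeomorphisms agree to first order at $m$, they coincide on the connected component of $m$. Once this is established, the dimension count is immediate, since the target is a bundle whose fibre is a $\GL(n,\R)$-torsor: the base $M$ contributes $n$ dimensions and the linear isomorphism contributes $n^2$, for a total of $n + n^2 = n(n+1)$.

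The key geometric input for injectivity is that affine morphisms send geodesics to geodesics, which follows directly from the defining property that $\phi_*$ intertwines parallel transports: a geodesic is a curve whose tangent vector is parallel along itself, and this condition is manifestly preserved. I would argue that the value $\phi(m)$ together with the linear map $\phi_{*m}$ determines $\phi$ on a normal neighbourhood of $m$ via the exponential map, using the compatibility $\phi\circ\Exp_m = \Exp_{\phi(m)}\circ\,\phi_{*m}$ on the domain where the exponential is a diffeomorphism. Indeed, for $v\in T_mM$ small enough, the geodesic $t\mapsto \Exp_m(tv)$ is carried by $\phi$ to the geodesic through $\phi(m)$ with initial velocity $\phi_{*m}v$, so $\phi(\Exp_m(v)) = \Exp_{\phi(m)}(\phi_{*m}v)$ is forced. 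A standard connectedness argument — propagating this local rigidity along chains of geodesically-connected normal neighbourhoods — then upgrades the determination from a neighbourhood to the whole connected component.

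To obtain the Lie group structure, rather than merely a bound on a set, I would realise $\aff(\nabla)$ as a closed subgroup of the group of diffeomorphisms preserving the connection, or equivalently exhibit the injection $\phi\mapsto(\phi(m),\phi_{*m})$ as a smooth embedding into the frame bundle $F(M)$ of $M$; the image is a closed submanifold, and the induced structure makes $\aff(\nabla)$ a Lie group whose dimension is at most $\dim F(M) = n + n^2$. This is precisely the statement recorded in \cite{KobTGDG}, so I would either invoke it directly or sketch the embedding into the frame bundle and appeal to the closed-subgroup theorem.

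The main obstacle is the rigidity step — proving that first-order agreement at one point forces global agreement. The local part via the exponential map is clean, but the passage to the whole connected component requires care: one must verify that the set of points where two affine diffeomorphisms (agreeing to first order at $m$) continue to agree to first order is both open and closed, which rests on the fact that parallel transport carries the first-order agreement along any curve. Openness follows from the local exponential argument, while closedness follows from continuity of $\phi$ and $\phi_*$; assembling these into a clean connectedness argument is where the real work lies, everything else being a dimension count.
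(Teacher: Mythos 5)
The paper does not actually prove this statement; it is quoted verbatim from \cite{KobTGDG}, so there is no internal argument to compare yours against. Judged on its own, your sketch is the standard one and its skeleton is sound: affine maps preserve geodesics, hence commute with the exponential map, hence are determined on a normal neighbourhood by the $1$-jet $(\phi(m),\phi_{*m})$, and an open-closed argument propagates this to the connected component of $m$; the target $M\times\iso(T_mM,T_{\phi(m)}M)$ (equivalently, the frame bundle $F(M)$) has dimension $n+n^2=n(n+1)$. Two caveats. First, the rigidity you prove only determines $\phi$ on the connected component of $m$, so the bound as stated genuinely requires $M$ connected (for $M=\R\sqcup\R$ with the flat connection, $\aff(\nabla)$ contains $\aff(1,\R)\times\aff(1,\R)$, of dimension $4>n(n+1)=2$); your own phrasing quietly exposes this hypothesis, which the paper's conventions do not make explicit.

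Second, and this is the real gap: the step "realise $\aff(\nabla)$ as a closed subgroup \ldots and appeal to the closed-subgroup theorem" does not work as written. Cartan's closed subgroup theorem needs an ambient finite-dimensional Lie group, and $\Diff(M)$ is not one; moreover "the group of diffeomorphisms preserving the connection" is just $\aff(\nabla)$ itself, so that reformulation is circular. Injectivity of $\phi\mapsto(\phi(m),\phi_{*m})$ bounds the cardinality-type size of the group but by itself gives neither a manifold structure on the image nor smoothness of the group operations. The mechanism in \cite{KobTGDG} is to lift each affine transformation to $F(M)$, observe that the lifts are exactly the automorphisms of the canonical complete parallelism on $F(M)$ (the standard horizontal vector fields of $\nabla$ together with the fundamental vertical vector fields), and then invoke the theorem that the automorphism group of a complete parallelism on a connected manifold $N$ is a Lie group of dimension at most $\dim N$, acting freely so that each orbit map is an embedding. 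That last theorem is where the analytic work lives (one has to show the automorphism group is locally compact and then apply Montgomery--Zippin/Bochner--Montgomery, or argue directly that orbits are closed submanifolds), and it is precisely the content you would be importing from the reference. So your proposal is acceptable as a reduction to \cite{KobTGDG}, but it should not present the Lie-group-structure step as a routine application of the closed subgroup theorem.
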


\begin{example}\label{ex: affine group}
 In the simplest case, the Euclidean connection $\nabla^\emph{Euc}$ on $\mathbb{R}^n$, we have $\Aff(\R^n,\nabla^\emph{Euc})\cong \Aff(n,\R)\coloneqq \GL(n,\R)\ltimes\R^n$, thus attaining the maximal dimension.
\end{example}

\begin{example} 
Every isometry between two (pseudo-)Riemannian metrics is an affine diffeomorphism between the corresponding Levi-Civita connections, see e.g. \cite{KobTGDG}. However, the converse is not true in general as it is clear from Example \ref{ex: affine group}.
\end{example}

A simpler characterization of affine morphisms will prove useful. 

\begin{proposition}\label{prop: affine-morphism}
 A map $\phi\colon M\rightarrow M'$ is an affine morphism if and only if
\begin{equation*}\label{eq: affmorph}
\nabla\circ\phi^*=\phi^*\circ\nabla',
 \end{equation*}
 where $\nabla$ is seen as a map $\Omega^1(M)\rightarrow\Omega^1(M, T^*M)$.
\end{proposition}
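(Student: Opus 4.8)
The statement characterizes affine morphisms via the commutation of the connection (viewed as a map $\Omega^1\to\Omega^1(M,T^*M)$) with pullback. The natural strategy is to reduce the geometric condition on parallel transport to an infinitesimal statement along curves, and then to recognize that infinitesimal statement as precisely the commutation identity $\nabla\circ\phi^*=\phi^*\circ\nabla'$.

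\textbf{The approach.} The plan is to prove both implications by differentiating the parallel-transport condition. First I would recall that, for a connection on the cotangent bundle, the covariant derivative $(\nabla_{\dot\gamma}\beta)$ of a $1$-form $\beta$ along a curve $\gamma$ is recovered from parallel transport by the standard formula
\begin{equation*}
(\nabla_{\dot\gamma(t_0)}\beta)=\rest{\frac{\dif}{\dif t}}{t=t_0}\big(P^\gamma_{t,t_0}\big)^*\beta_{\gamma(t)},
\end{equation*}
where $(P^\gamma)^*$ denotes the induced parallel transport on $T^*M$ (dual to the one on $TM$). Since the hypothesis is phrased in terms of $\phi_*$ on $TM$, I would dualize: the condition $\phi_{*\gamma(t_1)}\circ P^\gamma_{t_0,t_1}=P'^{\phi\circ\gamma}_{t_0,t_1}\circ\phi_{*\gamma(t_0)}$ on tangent vectors is equivalent to the statement that $\phi^*$ intertwines the dual parallel transports on cotangent bundles. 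This is the key linear-algebra translation, and it holds because pullback of $1$-forms is the transpose of pushforward of vectors.

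\textbf{Key steps.} With this dictionary in place, I would argue as follows. For the forward direction, assume $\phi$ is affine. Take an arbitrary $1$-form $\beta\in\Omega^1(M')$, a point $m\in M$, a tangent vector $X_m\in T_mM$, and a curve $\gamma$ through $m$ with $\dot\gamma(0)=X_m$. Differentiating the intertwining of cotangent parallel transports at $t=0$ and applying the formula above to both $\nabla$ (acting on $\phi^*\beta$ along $\gamma$) and $\nabla'$ (acting on $\beta$ along $\phi\circ\gamma$, whose initial velocity is $\phi_*X_m$) yields
\begin{equation*}
(\nabla_{X_m}\phi^*\beta)=\phi^*_m\big(\nabla'_{\phi_*X_m}\beta\big)=\big(\phi^*\circ\nabla'\big)(\beta)(X_m),
\end{equation*}
which is exactly $\nabla\circ\phi^*=\phi^*\circ\nabla'$ once one reads the right-hand side as contraction of $\nabla'\beta\in\Omega^1(M',T^*M')$ pulled back along $\phi$. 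For the converse, assume the commutation identity; reversing the computation shows that the $\nabla$-covariant derivative of $\phi^*\beta$ along any curve equals the pullback of the $\nabla'$-covariant derivative of $\beta$, so a field is parallel along $\gamma$ for $\nabla'$ precisely when its pullback is parallel for $\nabla$, which by uniqueness of parallel transport forces the transports to intertwine.

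\textbf{Main obstacle.} The routine computations are unremarkable; the genuinely delicate point is the careful handling of base points and the fact that $\phi$ need not be a diffeomorphism. When $\phi$ is merely a smooth map, $\phi^*\beta$ is defined but one must be attentive that the pullback connection identity $\nabla\circ\phi^*=\phi^*\circ\nabla'$ is being read correctly: the right-hand side involves pulling back the $T^*M'$-valued $1$-form $\nabla'\beta$, and one must verify that the $\cCi(M)$-linearity in the differentiating direction is respected so that both sides are genuine tensorial expressions in $X_m$. The hard part, then, is setting up the correspondence between the geometric (transport) and algebraic (derivative) formulations cleanly enough that the non-diffeomorphism case causes no trouble, rather than any analytic difficulty in the limits themselves.
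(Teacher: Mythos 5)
Your proposal is correct and follows essentially the same route as the paper: both dualize the parallel-transport condition to the cotangent bundles (using that $\phi^*$ is the transpose of $\phi_*$), obtain the forward implication by differentiating the intertwining relation along a curve, and obtain the converse by showing that the commutation identity forces pullbacks of $\nabla'$-parallel covector fields along $\phi\circ\gamma$ to be $\nabla$-parallel along $\gamma$, whence the transports agree. The only cosmetic difference is that the paper phrases the converse by exhibiting an explicit parallel section $\alpha(t)=\phi^*_{\gamma(t)}{P'^*}^{\phi\circ\gamma}_{t_0,t}\zeta$ rather than invoking a biconditional on parallelism, but the substance is identical.
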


\begin{proof}
 Recall that if $P^\gamma_{t_0,t_1}$ is the parallel transport with respect to $\nabla$ from $t_0$ to $t_1$ along $\gamma$, its algebraic transpose is the parallel transport from $t_1$ to $t_0$ along $\gamma$ for the dual connection on $T^*M$, which we denote by ${P^*}^\gamma_{t_1,t_0}$. Therefore, a map $\phi\colon M\rightarrow M'$ is an affine morphism if and only if
 \begin{equation}\label{eq: aff morph proof}
{P^*}^\gamma_{t_0,t_1}\circ\phi^*_{\gamma(t_0)}=\phi^*_{\gamma(t_1)}\circ {P'^*}^{\phi\circ\gamma}_{t_0,t_1}.
 \end{equation}
 Consider an arbitrary vector $u\in TM$ and $\alpha\in\Omega^1(M')$. We have,
 \begin{equation*}
 \nabla_u\phi^*\alpha=\lim_{t\to 0}\frac{1}{t}({P^*}^\gamma_{t,0}\phi^*_{\gamma(t)}\alpha_{\phi(\gamma(t))}-\phi^*_{\gamma(0)}\alpha_{\phi(\gamma(0))}),
 \end{equation*}
 where $\gamma\colon I\rightarrow M$ is an arbitrary curve such that $\dot{\gamma}(0)=u$. Assume first that $\phi$ is an affine morphism. It follows from \eqref{eq: aff morph proof} that
 \begin{align*}
 \iota_u(\nabla\phi^*\alpha)&=\nabla_u\phi^*\alpha=\lim_{t\to 0}\frac{1}{t}(\phi^*_{\gamma(0)}{P'^*}^{\phi\circ\gamma}_{t,0}\alpha_{\phi(\gamma(t))}-\phi^*_{\gamma(0)}\alpha_{\phi(\gamma(0))})\\
 &=\phi^*_{\gamma(0)}\lim_{t\to 0}\frac{1}{t}({P'^*}^{\phi\circ\gamma}_{t,0}\alpha_{\phi(\gamma(t))}-\alpha_{\phi(\gamma(0))})=\phi^*_{\gamma(0)}\nabla'_{\phi_{*\gamma(0)}u}\alpha=\iota_u(\phi^*\nabla'\alpha).
 \end{align*}
 Conversely, assume that $\nabla\circ\phi^*=\phi^*\circ \nabla'$ is true. Consider an arbitrary curve $\gamma\colon I\rightarrow M$, $t_0, t_1\in I$ and $\zeta\in T^*_{\phi(\gamma(t_0))} M'$. To prove that $\phi$ is an affine morphism, by \eqref{eq: aff morph proof}, it is enough to show that the $1$-form $\alpha\in\Gamma(\gamma^*TM)$ defined, for $t\in I$, by
 \begin{align*}
 \alpha(t)&\coloneqq \phi^*_{\gamma(t)}\beta(t), & \beta(t)&\coloneqq {P'^*}^{\phi\circ\gamma}_{t_0,t}\zeta,
 \end{align*}
 satisfies $\nabla_{\dot{\gamma}}\alpha=0$. This follows from $\nabla'_{\phi_*\dot{\gamma}}\beta=0$, as
 \begin{equation*}
\nabla_{\dot{\gamma}}\alpha=\nabla_{\dot{\gamma}}\phi^*\beta=\phi^*\nabla'_{\phi_*\dot{\gamma}}\beta=0.
 \end{equation*}
\end{proof}

For affine diffeomorphisms, we can reformulate Proposition \ref{prop: affine-morphism} in terms of the original connection on $TM$.

\begin{proposition}[e.g., \cite{KoNoFDG}]\label{prop: affnabmorph}
A diffeomorphism $\phi\colon M\rightarrow M'$ is an affine diffeomorphism if and only if, for all $X, Y\in\mathfrak{X}(M)$,
\begin{equation*}
\phi_*(\nabla_XY)=\nabla'_{\phi_*X}\phi_*Y
\end{equation*}
\end{proposition}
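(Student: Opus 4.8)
The plan is to deduce the statement directly from Proposition \ref{prop: affine-morphism}, which already characterizes affine morphisms through the dual connection on the cotangent bundle. Since $\phi$ is a diffeomorphism, both $\phi^*:\Omega^1(M')\to\Omega^1(M)$ and $\phi_*:\mathfrak{X}(M)\to\mathfrak{X}(M')$ are isomorphisms, and the natural pairing is compatible with them: for $\alpha\in\Omega^1(M')$ and $X\in\mathfrak{X}(M)$ one has $\la\phi^*\alpha,X\ra=\phi^*\la\alpha,\phi_*X\ra$, while for functions $X(\phi^*f)=\phi^*((\phi_*X)f)$. These two identities are the only tools I would need, together with the defining Leibniz rules of the dual connections,
\begin{equation*}
X\la\beta,Y\ra=\la\nabla_X\beta,Y\ra+\la\beta,\nabla_XY\ra,\qquad (\phi_*X)\la\alpha,\phi_*Y\ra=\la\nabla'_{\phi_*X}\alpha,\phi_*Y\ra+\la\alpha,\nabla'_{\phi_*X}\phi_*Y\ra,
\end{equation*}
valid for $\beta\in\Omega^1(M)$, $\alpha\in\Omega^1(M')$ and $X,Y\in\mathfrak{X}(M)$.

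First I would take $\beta=\phi^*\alpha$ in the first Leibniz identity and rewrite its left-hand side using $\la\phi^*\alpha,Y\ra=\phi^*\la\alpha,\phi_*Y\ra$ and the function rule, obtaining $X\la\phi^*\alpha,Y\ra=\phi^*\big((\phi_*X)\la\alpha,\phi_*Y\ra\big)$. Expanding this last expression with the second Leibniz identity and pulling $\phi^*$ through the pairing (again via the compatibility identity, applied to $\nabla'_{\phi_*X}\alpha$ in the first slot and to $\nabla'_{\phi_*X}\phi_*Y=\phi_*\big((\phi^{-1})_*\nabla'_{\phi_*X}\phi_*Y\big)$ in the second) turns the first Leibniz identity into
\begin{equation*}
\la\nabla_X(\phi^*\alpha),Y\ra+\la\phi^*\alpha,\nabla_XY\ra=\la\phi^*(\nabla'_{\phi_*X}\alpha),Y\ra+\la\phi^*\alpha,(\phi^{-1})_*(\nabla'_{\phi_*X}\phi_*Y)\ra.
\end{equation*}

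The proof then splits into the two implications, both of which can be read off this single equation. If $\phi$ is an affine diffeomorphism, Proposition \ref{prop: affine-morphism} gives $\nabla_X(\phi^*\alpha)=\phi^*(\nabla'_{\phi_*X}\alpha)$, so the first terms on each side cancel and what remains is $\la\phi^*\alpha,\nabla_XY\ra=\la\phi^*\alpha,(\phi^{-1})_*(\nabla'_{\phi_*X}\phi_*Y)\ra$; since $\phi^*$ is onto $\Omega^1(M)$ and the pairing is nondegenerate, this forces $\nabla_XY=(\phi^{-1})_*(\nabla'_{\phi_*X}\phi_*Y)$, that is, $\phi_*(\nabla_XY)=\nabla'_{\phi_*X}\phi_*Y$. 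Conversely, if $\phi_*(\nabla_XY)=\nabla'_{\phi_*X}\phi_*Y$ holds for all $X,Y$, the second terms cancel and nondegeneracy yields $\nabla_X(\phi^*\alpha)=\phi^*(\nabla'_{\phi_*X}\alpha)$ for all $X$ and $\alpha$, which is exactly $\nabla\circ\phi^*=\phi^*\circ\nabla'$; Proposition \ref{prop: affine-morphism} then shows $\phi$ is an affine morphism, hence an affine diffeomorphism.

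The computation is entirely formal once the correct pairing identities are in place, so the only real care is bookkeeping: I would make sure that the duality $\la\phi^*\alpha,X\ra=\phi^*\la\alpha,\phi_*X\ra$ is used consistently in both slots of the pairing and that $(\phi^{-1})_*$ is inserted exactly where a vector field on $M'$ must be transported back to $M$. This transposition step, recognizing $\nabla'_{\phi_*X}\phi_*Y$ as the pushforward of a vector field on $M$, is the one place where the diffeomorphism hypothesis is genuinely essential and is the main (mild) obstacle.
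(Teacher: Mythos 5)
Your argument is correct. Note, though, that the paper does not actually prove this proposition: it is stated as a citation to Kobayashi--Nomizu and presented as a reformulation of Proposition \ref{prop: affine-morphism} in terms of the connection on $TM$ rather than its dual on $T^*M$. What you have done is supply the missing duality argument: starting from the Leibniz rule defining the dual connection and the compatibility of the pairing with $\phi^*$ and $\phi_*$, you derive a single identity from which both implications follow by nondegeneracy of the pairing and surjectivity of $\phi^*$. This is a clean route and fits the paper's internal logic well, since Proposition \ref{prop: affine-morphism} is the result the authors actually prove from the parallel-transport definition; your dualization closes the loop without returning to parallel transport. The textbook proof one would find in the cited reference instead works directly on $TM$, using the limit formula $\nabla_XY=\lim_{t\to 0}\tfrac{1}{t}(P^\gamma_{t,0}Y_{\gamma(t)}-Y_{\gamma(0)})$ together with the intertwining of parallel transports, exactly mirroring the computation the paper performs on $T^*M$ in the proof of Proposition \ref{prop: affine-morphism}. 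Both approaches are sound; yours has the advantage of reusing an already-established result, at the cost of the bookkeeping with $(\phi^{-1})_*$ that you correctly identify as the one delicate step.
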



Given a diffeomorphism $\phi\colon M\rightarrow M'$ and a connection $\nabla'$ on $M'$, the \textbf{pullback connection} $\phi^*\nabla'$ on $M$ is given for $X,Y\in\mathfrak{X}(M)$ by
\begin{equation*}
(\phi^*\nabla')_XY\coloneqq \phi^{-1}_*\nabla'_{\phi_*X}\phi_*Y.
\end{equation*}

\begin{corollary}\label{cor: pullback con}
 Let $\phi\colon M\rightarrow M'$ be a diffeomorphism and $\nabla'$ be a connection on $M'$. Then $\phi$ is an affine diffeomorphism between $(M,\phi^*\nabla')$ and $(M',\nabla')$.
\end{corollary}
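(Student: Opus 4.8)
The plan is to reduce everything to the characterization of affine diffeomorphisms given by Proposition \ref{prop: affnabmorph}. Since $\phi$ is a diffeomorphism and the pullback connection $\phi^*\nabla'$ is defined precisely so as to make the relevant pushforward relation hold, the statement follows almost immediately; the corollary is essentially built into the definition.

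First I would observe that $\phi^*\nabla'$ is indeed an affine connection on $M$, so that the notation $\aff(\phi^*\nabla',\nabla')$ makes sense. This is a routine check. The additivity axioms are clear, while $\cCi(M)$-linearity in the first argument follows from that of $\nabla'$ together with the identity $\phi_*(fX)=(\phi^{-1})^*f\cdot\phi_*X$ and the observation that $\phi^{-1}_*$ of $(\phi^{-1})^*f$ times a vector field reintroduces the factor $f$. The Leibniz rule in the second argument follows analogously, using $\phi_*(fY)=(\phi^{-1})^*f\cdot\phi_*Y$ and the fact that $\phi_*X$ differentiates $(\phi^{-1})^*f$ as $(\phi^{-1})^*(Xf)$, so that the derivative term pushes forward to $(Xf)\,Y$.

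Then, by Proposition \ref{prop: affnabmorph}, to prove $\phi\in\aff(\phi^*\nabla',\nabla')$ it suffices to verify, for all $X,Y\in\mathfrak{X}(M)$, that
\begin{equation*}
\phi_*\big((\phi^*\nabla')_XY\big)=\nabla'_{\phi_*X}\phi_*Y.
\end{equation*}
Substituting the definition $(\phi^*\nabla')_XY=\phi^{-1}_*\nabla'_{\phi_*X}\phi_*Y$ and using $\phi_*\circ\phi^{-1}_*=\id$, the left-hand side collapses to $\nabla'_{\phi_*X}\phi_*Y$, which is exactly the desired identity.

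There is no real obstacle here: the only mild point requiring care is confirming that $\phi^*\nabla'$ satisfies the connection axioms, and once that is in place the equality of Proposition \ref{prop: affnabmorph} holds by construction.
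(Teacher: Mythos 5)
Your proof is correct and follows the same route the paper intends: the corollary is stated without proof precisely because, as you observe, substituting the definition of $\phi^*\nabla'$ into the criterion of Proposition \ref{prop: affnabmorph} makes the identity $\phi_*\big((\phi^*\nabla')_XY\big)=\nabla'_{\phi_*X}\phi_*Y$ hold by construction. Your additional check that $\phi^*\nabla'$ satisfies the connection axioms is a reasonable piece of due diligence that the paper leaves implicit.
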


\begin{remark}\label{rk: pullbakc torsion}
 Since $\phi_*$ preserves the Lie bracket of vector fields, we get
\begin{equation*}
 T_{\phi^*\nabla'}(X,Y)=\phi^{-1}_*T_{\nabla'}(\phi_*X,\phi_*Y).
\end{equation*}
Therefore, $\phi^*\nabla'$ is torsion-free if and only if $\nabla'$ is torsion-free.
\end{remark}

\subsection{Relation to symmetric Cartan calculus}

In case of torsion-free connections, we can rephrase Proposition \ref{prop: affine-morphism} using the symmetric derivatives.

\begin{corollary}\label{cor: affmorph}
 Let $\nabla$ and $\nabla'$ be torsion-free connections on $M$ and $M'$ respectively. A smooth map $\phi\colon M\rightarrow M'$ is an affine morphism if and only if
 \begin{equation*}
\nabla^s\circ\phi^*=\phi^*\circ{\nabla'}^s.
 \end{equation*}
\end{corollary}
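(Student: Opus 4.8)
The plan is to reduce Corollary \ref{cor: affmorph} to the already-established Proposition \ref{prop: affine-morphism}, which states that $\phi$ is an affine morphism if and only if $\nabla\circ\phi^*=\phi^*\circ\nabla'$ as maps $\Omega^1(M')\to\Omega^1(M,T^*M)$. Since $\nabla^s=2\sym\circ\nabla$ by \eqref{nablas}, the forward implication is immediate: applying $2\sym$ to $\nabla\circ\phi^*=\phi^*\circ\nabla'$ and using that the pullback $\phi^*$ commutes with the symmetrization $\sym$ (as $\phi^*$ acts diagonally on tensor factors and hence is $\mathcal{S}_2$-equivariant) yields $\nabla^s\circ\phi^*=\phi^*\circ{\nabla'}^s$. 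So one direction needs only the observation that symmetrization is natural with respect to pullback.

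The substantive direction is the converse: I must recover the full connection identity $\nabla\circ\phi^*=\phi^*\circ\nabla'$ from its symmetrized shadow $\nabla^s\circ\phi^*=\phi^*\circ{\nabla'}^s$. The key point, and where torsion-freeness enters decisively, is that for a torsion-free connection the map $\nabla:\Omega^1\to\Omega^1\otimes T^*$ splits as $\nabla=\sym\circ\nabla+\ske\circ\nabla=\tfrac12\nabla^s+\ske\circ\nabla$, and the skew part is the intrinsic object $\ske\circ\nabla=\tfrac12\,\dif$ on $1$-forms (precisely the computation $\ske\circ\nabla^0=\tfrac12\dif$ used in the proof of Proposition \ref{covLL}). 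Thus $\nabla$ is completely determined by $\nabla^s$ together with $\dif$, via
\begin{equation*}
\nabla\alpha=\tfrac12\,\nabla^s\alpha+\tfrac12\,\dif\alpha,
\end{equation*}
viewing $\dif\alpha$ inside $\Omega^1\otimes T^*$ through the antisymmetric embedding. The analogous splitting holds for $\nabla'$.

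From here the converse is a matter of assembling the two naturality facts. I would write, for $\alpha\in\Omega^1(M')$,
\begin{equation*}
\nabla(\phi^*\alpha)=\tfrac12\,\nabla^s(\phi^*\alpha)+\tfrac12\,\dif(\phi^*\alpha)
=\tfrac12\,\phi^*({\nabla'}^s\alpha)+\tfrac12\,\phi^*(\dif\alpha)
=\phi^*\Big(\tfrac12\,{\nabla'}^s\alpha+\tfrac12\,\dif\alpha\Big)=\phi^*(\nabla'\alpha),
\end{equation*}
where the first equality in the middle uses the hypothesis $\nabla^s\circ\phi^*=\phi^*\circ{\nabla'}^s$ and the second uses the classical naturality $\dif\circ\phi^*=\phi^*\circ\dif$. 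Since both $\nabla$ and $\nabla'$ are torsion-free, the decomposition into symmetric and skew parts is valid, so we recover $\nabla\circ\phi^*=\phi^*\circ\nabla'$, and Proposition \ref{prop: affine-morphism} finishes the argument.

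The main obstacle I anticipate is purely a matter of careful bookkeeping rather than conceptual difficulty: one must verify that the skew-symmetric part of $\nabla$ on $1$-forms is genuinely $\tfrac12\dif$ (which requires torsion-freeness and is exactly the content extracted in the proof of Proposition \ref{covLL}), and that the identifications of $\nabla^s\alpha=2\sym\nabla\alpha$ and $\dif\alpha=2\ske\nabla\alpha$ are being added back consistently with the coefficient conventions. Provided these normalizations are tracked correctly, the reconstruction of $\nabla$ from $\nabla^s$ and $\dif$ is exact, and torsion-freeness is precisely the hypothesis that guarantees this splitting has no ambiguity.
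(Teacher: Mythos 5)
Your proof is correct and is essentially the argument the paper intends: the corollary is presented as a rephrasing of Proposition \ref{prop: affine-morphism}, obtained exactly by the torsion-free decomposition $\nabla\alpha=\tfrac12\nabla^s\alpha+\tfrac12\,\dif\alpha$ together with the naturality of $\dif$ and of $\sym$ under pullback. The only point left tacit is that the operator identity on all of $\Upsilon^\bullet(M')$ follows from its validity on functions and $1$-forms, since both sides are derivations along the algebra morphism $\phi^*$ --- the same locality argument the paper uses throughout.
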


\begin{remark}\label{rk: aff morph and nablas}
 Note that if $\nabla$ and $\nabla'$ are not torsion-free, we can only claim that if $\phi\colon M\rightarrow M'$ is an affine morphism, then $\nabla^s \circ\phi^*=\phi^*\circ \nabla'^s$.
\end{remark}

The fact that the pullback of a smooth map intertwines two symmetric derivatives has a very clear geometrical interpretation. 

\begin{proposition}\label{prop:affgeo}
 Let $\nabla$ and $\nabla'$ be connections on $M$ and $M'$ respectively. A smooth map $\phi\colon M\rightarrow M'$ satisfies $\nabla^s\circ \phi^*=\phi^*\circ\nabla'^s$ if and only if $\phi$ is geodesic preserving, that is, $\gamma$ being a $\nabla$-geodesic implies that $\phi\circ\gamma$ is a $\nabla'$-geodesic.
\end{proposition}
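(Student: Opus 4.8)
The plan is to transport both sides of the claimed equivalence to the picture of polynomials in velocities, where Proposition \ref{prop: nablas polynomials} turns $\nabla^s$ into the geodesic spray $X_\nabla$. Write $T\phi : TM \to TM'$ for the differential of $\phi$ and $(T\phi)^* : \cCi(TM') \to \cCi(TM)$ for the induced pullback of functions. The first step is a functoriality observation for the isomorphism \eqref{eq: sym forms polynomials}: for every $\varphi \in \Upsilon^\bullet(M')$ one has $\widetilde{\phi^*\varphi} = (T\phi)^* \tilde{\varphi}$. This is immediate from the definition of the tilde map, since $(\phi^*\varphi)_m(u\varlist u) = \varphi_{\phi(m)}(T\phi\, u\varlist T\phi\, u)$ for $u \in T_mM$ (and, in degree $0$, from $\pr' \circ T\phi = \phi \circ \pr$, where $\pr'$ is the projection of $TM'$).

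Combining this with Proposition \ref{prop: nablas polynomials}, applied on both $M$ and $M'$, the identity $\nabla^s \circ \phi^* = \phi^* \circ \nabla'^s$ becomes, after applying the tilde map,
\[
X_\nabla\big((T\phi)^* \tilde{\varphi}\big) = (T\phi)^*\big(X_{\nabla'} \tilde{\varphi}\big)
\]
for all $\varphi \in \Upsilon^\bullet(M')$. Since the tilde map is onto the polynomials in velocities, this says precisely that $X_\nabla((T\phi)^*h) = (T\phi)^*(X_{\nabla'}h)$ for every polynomial in velocities $h$ on $M'$. I would then argue that this is equivalent to $X_\nabla$ and $X_{\nabla'}$ being $T\phi$-related, i.e. $T(T\phi) \circ X_\nabla = X_{\nabla'} \circ T\phi$ as maps $TM \to T(TM')$. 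The nontrivial direction uses that, by Lemma \ref{lem:polynomials}, the natural fibre coordinates $v^j$ are degree-$1$ polynomials in velocities and the base coordinates $\pr^*x^i$ are degree-$0$ ones; together their differentials span the cotangent space of $TM'$ at every point, so agreement of the two tangent vectors $T(T\phi)((X_\nabla)_u)$ and $(X_{\nabla'})_{T\phi(u)}$ on all polynomials in velocities forces them to coincide.

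It then remains to identify $T\phi$-relatedness of the two sprays with the geodesic-preserving property. Here I would use the standard fact that the integral curves of a geodesic spray $X_\nabla$ are exactly the velocity lifts $\dot{\gamma}$ of the $\nabla$-geodesics $\gamma$, together with the chain-rule identity $\frac{\dif}{\dif t}(\phi\circ\gamma) = T\phi \circ \dot{\gamma}$. If the sprays are $T\phi$-related, then $T\phi$ carries integral curves of $X_\nabla$ to integral curves of $X_{\nabla'}$; applying this to $\dot{\gamma}$ and projecting by $\pr'$ shows that $\phi\circ\gamma$ is a $\nabla'$-geodesic, so $\phi$ is geodesic preserving. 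Conversely, given any $u \in TM$, take the $\nabla$-geodesic $\gamma$ with $\dot{\gamma}(0)=u$; geodesic preservation makes $T\phi\circ\dot{\gamma}$ the velocity lift of the $\nabla'$-geodesic $\phi\circ\gamma$, hence an integral curve of $X_{\nabla'}$, and differentiating at $t=0$ gives $T(T\phi)((X_\nabla)_u) = (X_{\nabla'})_{T\phi(u)}$; as $u$ is arbitrary, the sprays are $T\phi$-related.

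I expect the main obstacle to be the middle step, namely justifying that testing the intertwining relation only against polynomials in velocities (a proper subalgebra of $\cCi(TM')$) suffices to recover the full pointwise equality of the sprays. The resolution is the spanning remark above: the coordinate functions on $TM'$ are themselves polynomials in velocities of degrees $0$ and $1$, so no information is lost. The rest is the careful but routine bookkeeping of matching integral curves of the sprays with geodesics and their images under $\phi$.
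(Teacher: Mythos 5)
Your argument is correct, but it takes a genuinely different route from the paper's. The paper proves the proposition by a direct tensorial computation: for any curve $\gamma$ with $\gamma'=\phi\circ\gamma$ and any $\alpha\in\Upsilon^1(M')$ it establishes the pointwise identity
\[
(\nabla^s\phi^*\alpha)(\dot{\gamma},\dot{\gamma})-(\phi^*\nabla'^s\alpha)(\dot{\gamma},\dot{\gamma})=2\big((\phi^*\alpha)(\nabla_{\dot{\gamma}}\dot{\gamma})-\alpha(\nabla'_{\dot{\gamma}'}\dot{\gamma}')\big),
\]
and both implications are read off from this single formula, together with the existence of a geodesic with prescribed initial velocity and polarization. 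You instead transport the whole statement to the tangent bundle via the dictionary \eqref{eq: sym forms polynomials}, use Proposition \ref{prop: nablas polynomials} to convert the intertwining of symmetric derivatives into $T\phi$-relatedness of the geodesic sprays, and then identify relatedness of the sprays with geodesic preservation through the description of their integral curves (the content of Lemma \ref{lem: X nabla on geodesics}, which appears later in the paper but does not depend on this proposition, so there is no circularity). Your route is more conceptual --- it isolates the reusable statement that $\phi$ intertwines the symmetric derivatives exactly when it intertwines the sprays --- at the cost of more machinery and one point you should make precise: the fibre coordinates $v^j$ are not themselves global polynomials in velocities (they are defined only on $TU'$), so to test relatedness at a point you should replace $v^j$ by $(\pr^*\chi)\,v^j$ and $x^i$ by $\chi x^i$ for a bump function $\chi$ equal to $1$ near the base point; since both sides of your identity evaluated at $u$ depend only on the germ of $h$ at $T\phi(u)$, this localization costs nothing and your spanning argument then goes through. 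The paper's proof is shorter and self-contained; yours buys a clean geometric reformulation on $TM$ that meshes well with the spray picture developed in Section \ref{killsec}.
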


\begin{proof}
Consider an arbitrary curve $\gamma\colon I\rightarrow M$ and denote the curve $\phi\circ\gamma$ on $M'$ by $\gamma'$. By a straightforward calculation, for every $\alpha\in\Upsilon^1(M')$ and $t\in I$, one finds 
\begin{equation}\label{eq:affgeo}
 (\nabla^s\phi^*\alpha)(\dot{\gamma}(t),\dot{\gamma}(t))-(\phi^*\nabla'^s\alpha)(\dot{\gamma}(t),\dot{\gamma}(t))=2((\phi^*\alpha)(\nabla_{\dot{\gamma}}\dot{\gamma})-\alpha(\nabla'_{\dot{\gamma}'}\dot{\gamma}'))(t).
\end{equation}
If the pullback $\phi^*$ intertwines the symmetric derivatives, we get
\begin{equation*}
 (\phi^*\alpha)(\nabla_{\dot{\gamma}}\dot{\gamma})=\alpha(\nabla'_{\dot{\gamma}'}\dot{\gamma}').
\end{equation*}
Clearly, $\gamma$ being a $\nabla$-geodesic implies that $\gamma'=\phi\circ\gamma$ is a $\nabla'$-geodesic.

Conversely, given an arbitrary $u\in TM$, there is a $\nabla$-geodesic $\gamma$ such that $\dot{\gamma}(0)=u$. It follows from \eqref{eq:affgeo} and the fact that $\gamma'$ is, by assumption, a $\nabla'$-geodesic, that
\begin{equation*}
 (\nabla^s\phi^*\alpha)(u,u)=(\phi^*\nabla'^s\alpha)(u,u).
\end{equation*}
Since symmetric derivatives are geometric derivations of $\Upsilon^\bullet(M)$, we obtain, by polarization, that $\nabla^s\circ\phi^*=\phi^*\circ\nabla'^s$.
\end{proof}

The relation between affine diffeomorphisms, geodesic preserving diffeomorphisms, and symmetric Cartan calculus is described by the following result.

\begin{proposition}\label{prop: Affine morphisms and SCC}
Let $\nabla$ and $\nabla'$ be connections on $M$ and $M'$ and denote the associated torsion-free connections by $\nabla^0$ and $\nabla'^0$ respectively. Then the following statements about a~diffeomorphism $\phi\colon M\rightarrow M'$ are equivalent:
\begin{enumerate}[]
\item $\phi$ is an affine diffeomorphism between $(M,\nabla^0)$ and $(M',\nabla'^0)$;
\item $\phi$ is geodesic preserving;
\item $\nabla^s\circ \phi^*=\phi^*\circ\nabla'^s$;
\item $\pounds^s_{X}\circ \phi^*=\phi^*\circ\pounds'^s_{\phi_*X}$ for all $X\in\mathfrak{X}(M)$;
\item $\phi_*[X,Y]_s=\pg{\phi_*X,\phi_*Y}'_s$ for all $X, Y\in\mathfrak{X}(M)$.
\end{enumerate}
\end{proposition}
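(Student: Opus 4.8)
The plan is to close a cycle of implications $(1)\Leftrightarrow(3)\Leftrightarrow(2)$ together with $(3)\Rightarrow(4)\Rightarrow(5)\Rightarrow(1)$, leaning on the results already proved for affine morphisms. The two equivalences through $(3)$ are almost immediate. Since $\nabla^0$ is torsion-free and is the torsion-free connection associated to $\nabla$ (and likewise for $\nabla'$), Lemma \ref{lem:sym-der-torsion-free} gives $\nabla^s=(\nabla^0)^s$ and $\nabla'^s=(\nabla'^0)^s$; hence statement $(3)$ is literally the condition $(\nabla^0)^s\circ\phi^*=\phi^*\circ(\nabla'^0)^s$, which by Corollary \ref{cor: affmorph} applied to the torsion-free pair $\nabla^0,\nabla'^0$ is equivalent to $\phi\in\aff(\nabla^0,\nabla'^0)$, that is, $(1)$. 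For $(2)\Leftrightarrow(3)$ I would simply invoke Proposition \ref{prop:affgeo}, which holds for arbitrary connections and asserts precisely that $\nabla^s\circ\phi^*=\phi^*\circ\nabla'^s$ holds if and only if $\phi$ is geodesic preserving.

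The implications $(3)\Rightarrow(4)\Rightarrow(5)$ are purely formal manipulations of derivations, once I record the elementary intertwining identity $\iota_X\circ\phi^*=\phi^*\circ\iota_{\phi_*X}$, valid for any diffeomorphism $\phi$ (checked directly on $1$-forms, and extended to all of $\Upsilon^\bullet$ since both sides obey the same Leibniz rule relative to the algebra homomorphism $\phi^*$ and agree on generators). Granting $(3)$, I expand $\pounds^s_X=[\iota_X,\nabla^s]$ from Definition \ref{symLalg} and push $\phi^*$ through each factor using $(3)$ and the intertwining identity, obtaining $\pounds^s_X\circ\phi^*=\phi^*\circ[\iota_{\phi_*X},\nabla'^s]=\phi^*\circ\pounds'^s_{\phi_*X}$, which is $(4)$. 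For $(4)\Rightarrow(5)$ I use the defining relation \eqref{eq:symmetric-as-derived-bracket}, namely $\iota_{[X,Y]_s}=[\pounds^s_X,\iota_Y]$: pushing $\phi^*$ through the commutator by means of $(4)$ and the intertwining identity yields $\iota_{[X,Y]_s}\circ\phi^*=\phi^*\circ\iota_{[\phi_*X,\phi_*Y]'_s}$. Comparing with $\iota_{[X,Y]_s}\circ\phi^*=\phi^*\circ\iota_{\phi_*[X,Y]_s}$ and using injectivity of $\phi^*$ and of $\iota$ (a vector field is determined by its contraction, cf. Lemma \ref{lem:-1-der-are-contractions}) gives $(5)$.

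The one genuinely new step is $(5)\Rightarrow(1)$. Writing the symmetric bracket via its torsion-free representatives as $[X,Y]_s=\nabla^0_XY+\nabla^0_YX$ (and similarly on $M'$, using that the torsion terms cancel), statement $(5)$ reads
\[
\phi_*(\nabla^0_XY+\nabla^0_YX)=\nabla'^0_{\phi_*X}\phi_*Y+\nabla'^0_{\phi_*Y}\phi_*X .
\]
The idea is to recover the asymmetric relation from this symmetric one by exploiting that $\phi$, being a diffeomorphism, automatically preserves the Lie bracket, $\phi_*[X,Y]=[\phi_*X,\phi_*Y]$, while torsion-freeness gives $\nabla^0_XY-\nabla^0_YX=[X,Y]$ on $M$ and its analogue on $M'$. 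Adding this antisymmetric identity to the displayed symmetric one, the $B$-terms cancel and one is left with $\phi_*(\nabla^0_XY)=\nabla'^0_{\phi_*X}\phi_*Y$ for all $X,Y$; Proposition \ref{prop: affnabmorph} then identifies this with $\phi\in\aff(\nabla^0,\nabla'^0)$, that is, $(1)$. The main obstacle is precisely this polarization-type splitting: the equality of symmetric brackets alone is strictly weaker than the affine condition, and it is only its combination with the freely available compatibility with the Lie bracket that upgrades the symmetric identity to the full asymmetric one.
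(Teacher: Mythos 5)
Your proposal is correct and follows essentially the same route as the paper: the equivalence of (1), (2), (3) via Corollary \ref{cor: affmorph} and Proposition \ref{prop:affgeo}, the formal derivation-commutator argument for $(3)\Rightarrow(4)\Rightarrow(5)$ using $\iota_X\circ\phi^*=\phi^*\circ\iota_{\phi_*X}$, and the closing step $(5)\Rightarrow(1)$ by combining the preserved symmetric bracket with the automatically preserved Lie bracket to reconstruct $\nabla^0$ (the paper phrases this as $\phi_*\nabla^0_XY=\tfrac{1}{2}(\phi_*[X,Y]_s+\phi_*[X,Y])$, which is the same polarization you perform). The only cosmetic differences are that you derive $(4)\Rightarrow(5)$ abstractly from $\iota_{[X,Y]_s}=[\pounds^s_X,\iota_Y]$ where the paper evaluates on a $1$-form, and a stray reference to ``$B$-terms''; neither affects correctness.
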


\begin{proof}
The equivalence of \emph{(1)}, \emph{(2)} and \emph{(3)} is given by Corollary \ref{cor: affmorph} and Proposition \ref{prop:affgeo}. We see that \emph{(3)} implies \emph{(4)} by using the identity $\iota_X\circ\phi^*=\phi^*\circ\iota_{\phi_*X}$:
\begin{align*}
\phi^*\circ\pounds'^s_{\phi_*X}&=\phi^*\circ\iota_{\phi_*X}\circ\nabla'^s-\phi^*\circ\nabla'^s\circ\iota_{\phi_*X}=\iota_X\circ\nabla^s\circ\phi^*-\nabla^s\circ\iota_{X}\circ \phi^*\\
&=\pounds_{X}^{s}\circ \phi^*.
\end{align*}
To see that \emph{(4)} implies \emph{(5)}, simply consider \emph{(4)} for any $\alpha\in\Upsilon^1(M')$, that is,
\begin{align*}
0=\,&(\phi^*\pounds'^{s}_{\phi_*X}\alpha)(Y)-(\pounds^{s}_{X}\phi^*\alpha)(Y)=\phi^*((\pounds_{\phi_*X}'^s\alpha)(\phi_*Y))-(\pounds^s_{X}\phi^*\alpha)(Y)\\
=\,&\phi^*((\phi_*X)\alpha(\phi_*Y))-\phi^*(\alpha(\pg{\phi_*X,\phi_*Y}'_{s}))-X(\phi^*\alpha)(Y)+(\phi^*\alpha)([X,Y]_s)\\
=\,&\phi^*(\alpha(\pg{\phi_*X,\phi_*Y}'_{s}-\phi_*[X,Y]_s)).
\end{align*}
The equivalence of \emph{(5)} and \emph{(1)} follows from Proposition \ref{prop: affnabmorph} and the identities
\begin{align*}
\phi_*\nabla^0_XY&=\frac{1}{2}(\phi_*[X,Y]_s+\phi_*[X,Y]),\\\nabla'^0_{\phi_*X}\phi_*Y&=\frac{1}{2}(\pg{\phi_*X,\phi_*Y}'_{s}+[\phi_*X,\phi_*Y]).
\end{align*}
\end{proof}

\subsection{Affine vector fields}
The infinitesimal version of an affine diffeomorphism from the manifold to itself is a vector field whose flow is a local $1$-parameter subgroup of affine diffeomorphisms. We call such vector field an \textbf{affine vector field} and denote the vector space of all affine vector fields by $\aff(M,\nabla)$. There is a useful equivalent characterization of affine vector fields.

\begin{proposition}[\cite{KobTGDG}]\label{prop: affine vf}
 Let $\nabla$ be a connection on $M$. A vector field $X\in\mathfrak{X}(M)$ is affine if and only if, for all $Y, Z\in\mathfrak{X}(M)$,
 \begin{equation}\label{eq: affine vector field}
 [L_X,\nabla_Y]Z=\nabla_{[X,Y]}Z.
 \end{equation}
\end{proposition}

\begin{example}\label{ex:Killing-vec-field}
 Consider a (pseudo-)Riemannian manifold $(M,g)$. Every Killing vector field for $g$ is an affine vector field for the corresponding Levi-Civita connection. However, the converse is not true in general. For instance, the affine vector field $X\coloneqq x^i\partial_{x^i}\in\mathfrak{X}(\R^n)$ for the Euclidean connection is not a Killing vector field for the Euclidean metric $g^\emph{Euc}$, since we have $L_Xg^\emph{Euc}=2g^\emph{Euc}\neq 0$.
\end{example}


If the connection is torsion-free, there are three more equivalent formulae characterizing affine vector fields that are closely related to symmetric Cartan calculus and will also be relevant in Section \ref{sec: simp-com}.

\begin{proposition}\label{prop: affine vector field tf}
 Let $\nabla$ be a torsion-free connection on $M$. A vector field $X\in\mathfrak{X}(M)$ is affine if and only if one of the following equivalent statements is true
\begin{enumerate}
 \item $2\sym\iota_X R_\nabla +R^s_\nabla X=0$,
 \item $[X,\pg{Y,Z}_s]=\pg{[X,Y],Z}_s+\pg{Y,[X,Z]}_s$ for all $Y,Z\in\mathfrak{X}(M)$.
 \item $[\nabla^s, L_X]=0$.
\end{enumerate}
\end{proposition}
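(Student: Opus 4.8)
The plan is to reduce all three conditions to the vanishing of the single $TM$-valued tensor
\begin{equation*}
 S(Y,Z):=[X,\nabla_YZ]-\nabla_Y[X,Z]-\nabla_{[X,Y]}Z,
\end{equation*}
since the Kobayashi characterization \eqref{eq: affine vector field} says precisely that $X$ is affine if and only if $S\equiv 0$. First I would rewrite every Lie bracket appearing in $S$ through the torsion-free identity $[A,B]=\nabla_AB-\nabla_BA$ and recognize the curvature tensor, obtaining
\begin{equation*}
 S(Y,Z)=R_\nabla(X,Y)Z+(\nabla^2X)(Y,Z).
\end{equation*}
Combining the first Bianchi identity with the relation $(2\ske\nabla^2X)(Y,Z)=R_\nabla(Y,Z)X$ recorded before Definition \ref{def: sym-curv}, a short computation gives $S(Y,Z)-S(Z,Y)=0$, so $S$ is symmetric. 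Comparing with Definition \ref{def: sym-curv} and the identity $(2\sym\iota_XR_\nabla)(Y,Z)=R_\nabla(X,Y)Z+R_\nabla(X,Z)Y$ (already used in the proof of Proposition \ref{prop:commutator-sym-der-sym-Lie}) then yields the key formula
\begin{equation*}
 2\sym\iota_XR_\nabla+R^s_\nabla X=2\sym S=2S.
\end{equation*}
Since $S$ is symmetric, polarization shows $S\equiv 0$ if and only if $S(Y,Y)=0$ for all $Y$, so condition \emph{(1)} is immediately equivalent to $S\equiv 0$, i.e. to affineness.

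For condition \emph{(2)} I would expand both sides using $[A,B]_s=\nabla_AB+\nabla_BA$ and regroup. The terms reassemble exactly into
\begin{equation*}
 [X,[Y,Z]_s]-[[X,Y],Z]_s-[Y,[X,Z]]_s=S(Y,Z)+S(Z,Y)=2S(Y,Z),
\end{equation*}
so \emph{(2)} holds for all $Y,Z$ if and only if $S\equiv 0$. In particular \emph{(1)} and \emph{(2)} are literally the same identity $2S=0$ written in two different guises, which is a clean way to present their equivalence.

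For condition \emph{(3)} my preferred route is the infinitesimal-to-global argument through affine diffeomorphisms. By Corollary \ref{cor: affmorph}, a diffeomorphism $\phi$ lies in $\aff(\nabla)$ if and only if $\nabla^s\circ\phi^*=\phi^*\circ\nabla^s$. Applying this to the flow $\Psi^X_t$ and differentiating at $t=0$ (using $\rest{\frac{\dif}{\dif t}}{t=0}(\Psi^X_t)^*=L_X$) shows that if $X$ is affine then $[\nabla^s,L_X]=0$. For the converse I would set $G(t):=\nabla^s\circ(\Psi^X_t)^*-(\Psi^X_t)^*\circ\nabla^s$; assuming $[\nabla^s,L_X]=0$ and using that $(\Psi^X_t)^*$ commutes with $L_X$, one finds the linear transport equation $\frac{\dif}{\dif t}G(t)=L_X\circ G(t)$ with $G(0)=0$, whence $G\equiv 0$ by uniqueness, so every $\Psi^X_t$ is affine and $X$ is an affine vector field. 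Alternatively, and more in the algebraic spirit of the previous section, one can write $L_X=2\nabla_X-L^s_X$ (Proposition \ref{covLL} with $\nabla^0=\nabla$) and combine Proposition \ref{prop:commutator-sym-der-sym-Lie} with the auxiliary identity $[\nabla^s,\nabla_X]=\nabla^s_{\nabla X}+\iota^s_{2\sym\iota_XR_\nabla+R^s_\nabla X}$ to obtain $[\nabla^s,L_X]=\iota^s_{2\sym\iota_XR_\nabla+R^s_\nabla X}$; injectivity of $\iota^s$ then reduces \emph{(3)} to condition \emph{(1)}.

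The routine curvature bookkeeping for $S$ and the expansion in \emph{(2)} are straightforward once the torsion-free identity is used consistently. The main obstacle is condition \emph{(3)}: in the flow approach it is the converse, where one must justify that the vanishing of the infinitesimal defect integrates to the vanishing of $G(t)$ for all $t$ (a uniqueness statement for a linear first-order equation acting degree by degree on $\Upsilon^\bullet(M)$); in the algebraic approach it is the auxiliary identity for $[\nabla^s,\nabla_X]$, whose verification on $1$-forms requires a careful expansion of second covariant derivatives and is exactly where torsion-freeness re-enters.
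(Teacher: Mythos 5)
Your argument is correct, and it diverges from the paper's in an interesting way. Both proofs start from the characterization \eqref{eq: affine vector field}, and your defect tensor $S(Y,Z)=R_\nabla(X,Y)Z+(\nabla^2X)(Y,Z)$, shown symmetric and equal to $\tfrac{1}{2}(2\sym\iota_XR_\nabla+R^s_\nabla X)$, reproduces the paper's computation for \emph{(1)} (the paper applies the Bianchi identity directly to the symmetrized expression rather than first proving symmetry of $S$; the two are interchangeable). For \emph{(2)} you expand the symmetric brackets by hand and recognize $2S$, whereas the paper substitutes $\nabla_Y=\tfrac{1}{2}(L_Y+L^s_Y)$ into \eqref{eq: affine vector field} and uses the Jacobi identity to isolate $[L_X,L^s_Y]=L^s_{[X,Y]}$; yours is more elementary, the paper's makes the structural content of \emph{(2)} visible. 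The genuine difference is \emph{(3)}: the paper never integrates anything, but computes $\iota_Y\circ[\nabla^s,L_X]$ on $\Upsilon^1(M)$ using the graded identities $[\dif,L_X]_\text{g}=0$ and $[L_X,\iota_Y]_\text{g}=\iota_{[X,Y]}$ (legitimate because $\nabla^s$ and $\dif$ agree on functions) and lands directly on $L^s_{[X,Y]}-[L_X,L^s_Y]$, i.e.\ on condition \emph{(2)}. Your flow argument is sound, but the converse should be closed with the standard conjugation trick, namely showing $\frac{\dif}{\dif t}\left((\Psi^X_{-t})^*\circ G(t)\right)=0$, rather than appealing to abstract ODE uniqueness for the unbounded operator $L_X$; your algebraic alternative is also valid --- the auxiliary identity $[\nabla^s,\nabla_X]=\nabla^s_{\nabla X}+\iota^s_{2\sym\iota_XR_\nabla+R^s_\nabla X}$ is true and yields the clean statement $[\nabla^s,L_X]=\iota^s_{2\sym\iota_XR_\nabla+R^s_\nabla X}$, which is exactly what the paper's calculation amounts to --- but you must verify that identity on $1$-forms independently, since deriving it from the target formula would be circular. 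What the paper's route buys is a short self-contained chain \emph{(3)}$\Leftrightarrow$\emph{(2)}$\Leftrightarrow$affine$\Leftrightarrow$\emph{(1)} with no integration and no extra lemma; what yours buys is a single organizing object $S=L_X\nabla$ that exhibits all three conditions as the same tensor equation.
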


\begin{proof}
 Using the fact that $\nabla$ is torsion-free, we can rewrite the relation \eqref{eq: affine vector field} as
 \begin{equation*}
 \nabla_X\nabla_YZ-\nabla_{\nabla_YZ}X-\nabla_Y\nabla_XZ+\nabla_Y\nabla_ZX=\nabla_{[X,Y]}Z.
 \end{equation*}
 Taking all the terms on the left-hand side yields $R_\nabla (X,Y)Z+(\nabla^2X)(Y,Z)=0$. This is thanks to Remark \ref{rem: curvature and sym-curvature} equivalent to 
 \begin{equation*}
 2R_\nabla (X,Y)Z+R_\nabla (Y,Z)X+(R^s_\nabla X)(Y,Z)=0.
 \end{equation*}
 Finally, using the algebraic Bianchi identity yields
 \begin{equation*}
 R_\nabla (X,Y)Z+R_\nabla (X,Z)Y+(R^s_\nabla X)(Y,Z)=0.
 \end{equation*} 
On the other hand, by Proposition \ref{covLL}, see also Remark \ref{remark: covLL}, we can replace all the covariant derivatives in \eqref{eq: affine vector field} with the Lie and symmetric Lie derivatives. Therefore, \eqref{eq: affine vector field} is satisfied if and only if
\begin{equation*}
 [L_X,L_Y]Z+[L_X,L^s_Y]Z=L_{[X,Y]}Z+L^s_{[X,Y]}Z.
\end{equation*}
Using the Jacobi identity for the Lie bracket of vector fields yields
\begin{equation*}
 [L_X,L^s_Y]Z=L^s_{[X,Y]}Z,
\end{equation*}
which can be expressed in terms of the Lie and symmetric bracket as
\begin{equation*}
 [X,\pg{Y,Z}_s]-\pg{Y,[X,Z]}_s=\pg{[X,Y],Z}_s.
\end{equation*}
Clearly $\rest{[\nabla^s, L_X]}{\cCi(M)}=\rest{[\dif, L_X]_\text{g}}{\cCi(M)}=0$. Moreover, for the commutator restricted on $1$-forms, we have
\begin{align*}
 \iota_Y&\circ\rest{[\nabla^s, L_X]}{\Upsilon^1(M)}\\
 &= \iota_Y\circ\nabla^s\circ L_X-\iota_Y\circ L_X\circ\nabla^s\\
 &=L^s_Y\circ L_X+\nabla^s\circ\iota_Y\circ L_X+\iota_{[X,Y]}\circ \nabla^s-L_X\circ\iota_Y\circ\nabla^s\\
 &=L^s_Y\circ L_X-L_X\circ L^s_Y+L^s_{[X,Y]}+\nabla^s\circ\iota_{[X,Y]}+\nabla^s\circ\iota_Y\circ L_X-L_X\circ\nabla^s\circ\iota_Y.
\end{align*}
Since $\nabla^s$ is geometric, we can replace it with the exterior derivative $\dif$. Using the identities $[\dif, L_X]_\text{g}=0$ and $[L_X,\iota_Y]_\text{g}=\iota_{[X,Y]}$ yields
\begin{equation*}
 \rest{(\iota_Y\circ[\nabla^s, L_X])}{\Upsilon^1(M)}=L^s_{[X,Y]}-[L_X,L^s_Y].
\end{equation*}
By a straightforward calculation one finds
\begin{equation*}
 ([\nabla^s, L_X]\alpha)(Y,Z)=\alpha([X,\pg{Y,Z}_s]-\pg{Y,[X,Z]}_s-\pg{[X,Y],Z}_s).
\end{equation*}
\end{proof}

\begin{remark}\label{rem:geo-meaning-sym-curv}
If, in particular, $\nabla$ is flat and torsion-free, $X\in\aff(M,\nabla)$ if and only if $R^s_\nabla X=0$, which gives a geometric interpretation to symmetric curvature operator.
\end{remark}

Note that the expression on the left-hand side of \emph{(1)} in Proposition \ref{prop: affine vector field tf} already appeared in Proposition \ref{prop:commutator-sym-der-sym-Lie}. Therefore, we have the following.

\begin{corollary}\label{cor: affine-vf}
 Let $\nabla$ be a torsion-free connection on $M$, we have that $X\in\aff(M,\nabla)$ if and only if
 \begin{equation*}
[\nabla^s,L^s_X]=2\nabla^s_{\nabla X}.
 \end{equation*}
\end{corollary}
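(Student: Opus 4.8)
The plan is to read the result off directly from the structural identity already established in Proposition \ref{prop:commutator-sym-der-sym-Lie}, combined with the intrinsic characterization of affine vector fields in Proposition \ref{prop: affine vector field tf}. First I would recall that Proposition \ref{prop:commutator-sym-der-sym-Lie} gives, for any torsion-free $\nabla$ and any $X\in\mathfrak{X}(M)$, the identity
\[
[\nabla^s,L^s_X]=2\nabla^s_{\nabla X}+\iota^s_{2\sym \iota_XR_{\nabla}+R^s_\nabla X}.
\]
Subtracting $2\nabla^s_{\nabla X}$ from both sides, the asserted equality $[\nabla^s,L^s_X]=2\nabla^s_{\nabla X}$ holds if and only if the residual derivation $\iota^s_{2\sym \iota_XR_{\nabla}+R^s_\nabla X}$ vanishes.

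The second step is to strip off the symmetric contraction. Recall from Section \ref{sec:linear-algebra-2} that $\iota^s$ realizes an isomorphism between $\Upsilon^\bullet(M,TM)$ and the derivations of $\Upsilon^\bullet(M)$ that annihilate scalars; in particular $\iota^s$ is injective. Hence $\iota^s_{2\sym \iota_XR_{\nabla}+R^s_\nabla X}=0$ is equivalent to the tensorial identity $2\sym \iota_XR_{\nabla}+R^s_\nabla X=0$.

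Finally, this is precisely condition \emph{(1)} in Proposition \ref{prop: affine vector field tf}, which is equivalent to $X\in\mathrm{aff}_\nabla(M)$. Chaining the three equivalences yields the claim, as anticipated by the observation preceding the statement that the term $2\sym \iota_XR_{\nabla}+R^s_\nabla X$ appears in both propositions.

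Since every ingredient is a previously proven equivalence, there is no genuine computational obstacle here; the only point requiring (mild) care is to invoke the injectivity of $\iota^s$ in its global form, so that the vanishing of the operator $\iota^s_{2\sym \iota_XR_{\nabla}+R^s_\nabla X}$ on all of $\Upsilon^\bullet(M)$ really forces the contracting tensor field $2\sym \iota_XR_{\nabla}+R^s_\nabla X\in\Upsilon^2(M,TM)$ to vanish pointwise. This follows immediately from the fiberwise isomorphism of Section \ref{sec:linear-algebra-2}, since a globally zero derivation restricts to the zero derivation on each fiber.
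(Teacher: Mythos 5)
Your proposal is correct and follows exactly the route the paper intends: the corollary is stated immediately after the observation that the tensor $2\sym\iota_X R_\nabla + R^s_\nabla X$ appears both in Proposition \ref{prop:commutator-sym-der-sym-Lie} and in condition \emph{(1)} of Proposition \ref{prop: affine vector field tf}, and the paper gives no further argument. Your additional remark that one must invoke the injectivity of $\iota^s$ (from the isomorphism of Section \ref{sec:linear-algebra-2}) to pass from the vanishing of the operator $\iota^s_{2\sym\iota_X R_\nabla + R^s_\nabla X}$ to the vanishing of the tensor itself is a correct and worthwhile point of care that the paper leaves implicit.
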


\subsection{Relation to parallel flow}
For a given connection $\nabla$, the space of all affine vector fields form a Lie subalgebra of $\mathfrak{X}(M)$. If $\nabla$ is, in addition, torsion-free, we have the natural abelian Lie subalgebra of $\aff(M,\nabla)$:
\begin{equation*}
 \aff_0(M,\nabla)\coloneqq\{X\in\aff(M,\nabla)\,|\, \nabla X=0\}.
\end{equation*}
We follow the standard terminology and refer to the condition $\nabla X=0$ by saying that $X$ is parallel with respect to $\nabla$.

\begin{example}
 For the Euclidean space, we straightforwardly find that
 \begin{align*}
 \aff_0(\R^n,\nabla^\emph{Euc})&=\spann\{\partial_{x^i}\,|\,i\in\{1\varlist n\}\},\\
 \aff(\R^n,\nabla^\emph{Euc})&=\aff_0(\R^n,\nabla^\emph{Euc})\oplus\spann\{x^i\partial_{x^j}\,|\,i,j\in\{1\varlist n\}\}.
 \end{align*}
\end{example}

The next proposition provides a characterization of $\aff_0(M,\nabla)$ in terms of parallel flow (Definition \ref{def: par-flow}).

\begin{proposition}\label{prop: parallel-flow}
 \sloppy For a torsion-free connection $\nabla$, we have that $X\in\mathrm{aff}_0(M,\nabla)$ if and only if $X\in\aff(M,\nabla)$ and the parallel flow of $X$ is equal to its actual flow. By the equality, we mean precisely that, for every $(t,m)\in\R\times M$ such that $\Psi^X_{2t}(m)$ is defined, we have
 \begin{equation*}
 \Theta^X_{t,m}=(\Psi^X_{-t})_{*\Psi^X_{t}(m)}.
 \end{equation*}
\end{proposition}

\begin{proof}
 We start by assuming that $X\in\mathrm{aff}(M,\nabla)$ and that the parallel flow of $X$ is equal to its flow. For an arbitrary $m\in M$, we have $\varepsilon\in\R^+$ such that $\Psi^X_{2t}$ is defined for all $t\in(-\varepsilon, \varepsilon)$, hence, the equality of the flows yields
 \begin{equation}\label{eq: aff0-prop}
 \rest{\frac{\dif}{\dif t}}{t=0}\Theta^X_{t,m}Y_{\Psi^X_t(m)}=\rest{\frac{\dif}{\dif t}}{t=0}(\Psi^X_{-t})_{*\Psi^X_{t}(m)}Y_{\Psi^X_t(m)}
 \end{equation}
 for every $Y\in\mathfrak{X}(M)$. By \eqref{eq: sym-br-par-flow} and the fact that the right-hand side is equal to $\rest{[X,Y]}{m}$, we get that $X$ is indeed parallel:
 \begin{equation*}
 0=\rest{[X,Y]_s}{m}-\rest{[X,Y]}{m}=2\nabla_{Y_m}X.
 \end{equation*}

 Let us now assume that $X\in\mathrm{aff}_0(M,\nabla)$. As $\mathrm{aff}_0(M,\nabla)$ is a subalgebra of $\mathrm{aff}(M,\nabla)$, it is enough to prove that the parallel flow of $X$ is equal to its flow. For an arbitrary point $m\in M$, we take a basis $\{e_i\}_{i=1}^n$ for $T_mM$ and extend it by the parallel transport to every point of the integral curve $\gamma(t)\coloneqq\Psi^X_{t}(m)$. In particular, we have the basis $\{e_i(t)\}_{i=1}^n$ for $T_{\gamma(t)}M$ for every $t\in\R$ such that $\Psi^X_{2t}(m)$ is defined. For $i\in\{1,\dots, n\}$, we can now introduce the curve in $T_mM$:
 \begin{equation*}
 Z_i(t)\coloneqq (\Theta^X_{t,m}-(\Psi^X_{-t})_{*\gamma(t)})e_i(t)
 \end{equation*}
 defined for every $t\in\R$ such that $\Psi^X_{2t}(m)$ is defined. In order to show that the flows are equal, it is enough to prove that $\{Z_i\}_{i=1}^n$ are constant curves equal to $0$. Clearly, we have that $Z_i(0)=0$ and, moreover, using properties of the parallel transport and the flow of a vector field yields
 \begin{align*}
 \dot{Z}_i(t)=\,&\rest{\frac{\dif}{\dif \lambda}}{\lambda=0}Z_i(t+\lambda)=\rest{\frac{\dif}{\dif \lambda}}{\lambda=0}(\Theta^X_{t+\lambda,m}-(\Psi^X_{-(t+\lambda)})_{*\gamma(t+\lambda)})e_i(t+\lambda)\\
 =\,&\rest{\frac{\dif}{\dif\lambda}}{\lambda=0}P^\gamma_{2t,0}P^\gamma_{2(t+\lambda),2t}(\Psi^X_t)_{*\gamma(t+2\lambda)}(\Psi^X_\lambda)_{*\gamma(t+\lambda)}e_i(t+\lambda)\\
 &-\rest{\frac{\dif}{\dif\lambda}}{\lambda=0}(\Psi^X_{-t})_{*\gamma(t)}(\Psi^X_{-\lambda})_{*\gamma(t+\lambda)}e_i(t+\lambda).
 \end{align*}
 By definition, the flow of $X\in\mathrm{aff}(M,\nabla)$ is a local $1$-parameter subgroup of $\aff(M,\nabla)$, that is, the tangent map of the flow of $X$ commutes with the parallel transport (Definition \ref{def: affine}), hence $\dot{Z}_i(t)$ is equal to
 \begin{align*}
 P^\gamma_{2t,0}(\Psi^X_t)_{*\gamma(t)}\rest{\frac{\dif}{\dif\lambda}}{\lambda=0}P^{\gamma^t}_{2\lambda,0}(\Psi^X_\lambda)_{*\gamma^t(\lambda)}e^t_i(\lambda)-(\Psi^X_{-t})_{*\gamma(t)}\rest{\frac{\dif}{\dif\lambda}}{\lambda=0}(\Psi^X_{-\lambda})_{*\gamma^t(\lambda)}e^t_i(\lambda),
 \end{align*}
where $\gamma^t(\lambda)\coloneqq\gamma(t+\lambda)$ and $e^t_i(\lambda)\coloneqq e_i(t+\lambda)$. By the torsion-freeness and the fact that $X$ is parallel, we have that
\begin{equation*}
 \dot{Z}_i(t)=P^\gamma_{2t,0}(\Psi^X_t)_{*\gamma(t)}\rest{[X,e_i]_s}{\gamma(t)}-(\Psi^X_{-t})_{*\gamma(t)}\rest{[X,e_i]}{\gamma(t)}=(\Theta^X_{t,m}-(\Psi^X_{-t})_{*\gamma(t)})\nabla_{\dot{\gamma}(t)} e_i.
\end{equation*}
By the construction $\{ e_i\}_{i=1}^n$ are parallel along $\gamma$, and thus we obtain that $\dot{Z}_i(t)=0$. In conclusion, we have that $Z_i(t)=0$ and the result follows.
\end{proof}

We finish this section by one of the main result of this paper. It gives the necessary and sufficient condition under which commutation relations of symmetric Cartan calculus are simplified to a natural shape. 

\begin{theorem}\label{thm: commutation-relations-sym-Cartan}
 Let $\nabla$ be a torsion-free connection on $M$ and $X,Y\in\mathfrak{X}(M)$. All of the six relations
 \begin{align*}
 [\iota_X,\iota_Y]&=0,& [\nabla^s,\nabla^s]&=0,&
 [\iota_X,\nabla^s]&=L^s_X,\\
 [L^s_X,\iota_Y]&=\iota_{[ X,Y]_s}, & [L^s_X,L^s_Y]&=L^s_{[X,Y]_s}, & [\nabla^s, L^s_X]&=0
\end{align*}
are satisfied if and only if $X\in\aff_0(M,\nabla)$.
\end{theorem}

\begin{proof}
 Note that the four commutators
 \begin{align*}
 [\iota_X,\iota_Y]&=0,& [\nabla^s,\nabla^s]&=0,&
 [\iota_X,\nabla^s]&=L^s_X, &
 [L^s_X,\iota_Y]&=\iota_{[ X,Y]_s}.
\end{align*}
are true for any vector fields. By Proposition \ref{prop:commutator-sym-der-sym-Lie}, we have that, in general,
\begin{equation*}
[\nabla^s,L^s_X]=2\nabla^s_{\nabla X}+\iota^s_{2\sym \iota_XR_\nabla +R^s_\nabla X}.
\end{equation*}
We clearly have that $\rest{[\nabla^s,L^s_X]}{\cCi(M)}=0$ if and only if $\rest{(\nabla^s_{\nabla X})}{\cCi(M)}=0$, which is equivalent to $X$ being parallel. For a parallel vector field $X$, we have that $ \rest{[\nabla^s,L^s_X]}{\Upsilon^1(M)}=0$ if and only if $2\sym \iota_XR_\nabla +R^s_\nabla X=0$, that is, by Proposition \ref{prop: affine vector field tf}, $X$ is an affine vector field. Altogether,
\begin{equation*}
 [\nabla^s,L^s_X]=0
\end{equation*}
if and only if $X\in\aff_0(M,\nabla)$. 

By Proposition \ref{prop: com1}, we have that, in general,
\begin{equation*}
 [L^s_X, L^s_Y]=L^s_{[X,Y]}+\iota^s_{2(\nabla_{\nabla X}Y-\nabla_{\nabla Y}X-R_\nabla (X,Y))}.
\end{equation*}
If $X$ is parallel, we have $[X,Y]=[X,Y]_s$ and the identity
\begin{equation*}
 [L^s_X, L^s_Y]=L^s_{[X,Y]_s}
\end{equation*}
is satisfied if and only if $R_\nabla (X,Y)=0$. Moreover, $X\in\aff_0(M,\nabla)$ is clearly equivalent to $R^s_\nabla X=0$ and $\sym\iota_X R_\nabla =0$. Therefore and by and the algebraic Bianchi identity, we have, for all $Y,Z\in\mathfrak{X}(M)$, that
\begin{equation*}
 R_\nabla (X,Y)Z=\frac{1}{2}(R_\nabla (X,Y)Z-R_\nabla (X,Z)Y)=-\frac{1}{2}R_\nabla (Y,Z)X,
\end{equation*}
which vanishes because $X$ is parallel and the result follows.
\end{proof}

\section{The Patterson-Walker metric and Killing vector fields}\label{sec:Patterson-Walker}


The commutation relations of symmetric Cartan calculus in Theorem \ref{thm: commutation-relations-sym-Cartan} are completely analogous to those of classical Cartan calculus, but they require an extra condition. At first glance, this may seem as an asymmetry between the two theories. However, this apparent discrepancy can be fully explained via the so-called Patterson-Walker metric (originally called the Riemannian extension in \cite{PatRE}). This is a metric that was originally introduced for the study of metrics admitting parallel fields of partially null planes and has had recent applications in the construction of anti-self-dual Einstein metrics in dimension four \cite{dunajski-mettler}.

We first reintroduce this metric in a coordinate-free way, then express it in terms of symmetric Cartan calculus and finally spell out the link to Theorem \ref{thm: commutation-relations-sym-Cartan} using Killing vector fields for the Patterson-Walker metric. Although the statements of the main results are neat, the proofs require some heavy lifting.

\subsection{Definition of the Patterson-Walker metric}
For an arbitrary connection $\nabla$ on $M$, the induced connection on the cotangent bundle $\pr\colon T^*M\rightarrow M$ gives the decomposition 
\begin{equation*}
T(T^*M)=\mathcal{H}\nb\oplus \mathcal{V}\cong \pr^*TM\oplus \pr^*T^*M = \pr^*(TM\oplus T^*M)
\end{equation*}
and the $\cCi(M)$-module morphism $\phi\nb\colon \Gamma(TM\oplus T^*M)\rightarrow \mathfrak{X}(T^*M)$.

\begin{remark}
 The structure of $\cCi(M)$-module on $\mathfrak{X}(T^*M)$ is given, for $f\in \cCi(M)$ and $Q\in\mathfrak{X}(T^*M)$, by $f\cdot Q\coloneqq (\pr^*f)\,Q$. The morphism $\phi\nb$ is given by pullback: for $\zeta\in T^*M$ and $a\in\Gamma(TM\oplus T^*M)$,
\begin{equation*}
 (\phi\nb a)(\zeta)\coloneqq a(\pr(\zeta)).
\end{equation*}
\end{remark}

Explicitly, $\rest{\phi\nb}{\mathfrak{X}(M)}$ is the horizontal lift, which we will denote $X^\text{h}\coloneqq \phi\nb X$, and $\rest{\phi\nb}{\Omega^1(M)}$ is the vertical lift, which we will denote $\alpha^\text{v}\coloneqq \phi\nb\alpha$. More details about lifts to the cotangent bundle are found in Appendix \ref{app: lifts}.

The bundle $TM\oplus T^*M$ comes equipped with a canonical symmetric pairing
\begin{equation}\label{eq:pairing}
 \la X+\alpha, Y+\beta\ra_+\coloneqq \alpha(Y)+\beta(X).
\end{equation}
Since pullback sections locally generate the entire space of sections, the image of $\phi\nb$ generates $\mathfrak{X}(T^*M)$ locally. Therefore, we can bring $\la\,\,,\,\,\ra_+$ to $T^*M$:

\begin{definition} \label{def:PW-metric} Let $\nabla$ be a connection on $M$. \textbf{The Patterson-Walker metric} $g\nb$ is the split-signature metric on $T^*M$ determined by
 \begin{equation*}
 g\nb(\phi\nb a,\phi\nb b)\coloneqq \pr^*\la a,b\ra_+.
\end{equation*}
\end{definition}

Different connections can induce different Patterson-Walker metrics. Let us show when the Patterson-Walker metrics of two different connections coincide. First, we need to prove two lemmas.

\begin{lemma}\label{lem: hor lifts}
 Given two connections on $M$ that differ by $\tau\in\Gamma(\otimes^2T^*M\otimes TM)$, their corresponding horizontal lifts $(\,\,)^\emph{h}$ and $(\,\,)^{\emph{h}'}$ are related as follows:
 \begin{equation*}
 X^{\emph{h}'}=X^\emph{h}+(\iota_X\tau)^\upsilon,
 \end{equation*}
 where $\upsilon$ denotes the vertical lift of sections of $\en TM$ (see Appendix \ref{app: lifts}).
\end{lemma}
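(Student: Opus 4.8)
The plan is to prove the equality of the two vector fields $X^{\text{h}'}$ and $X^\text{h}+(\iota_X\tau)^\upsilon$ on $T^*M$ by evaluating both on a family of functions whose differentials span $T^*(T^*M)$ at every point, so that agreement on this family forces the vector fields to coincide. A convenient such family consists of the pullbacks $\pr^*f$ for $f\in\cCi(M)$ together with the fiber-linear functions $\ell_Y$, $Y\in\mathfrak{X}(M)$, defined by $\ell_Y(\zeta):=\zeta(Y_{\pr(\zeta)})$. In natural coordinates $(x^i,p_i)$ on $T^*M$ the function $\pr^*x^i$ produces $dx^i$ and $\ell_{\partial_j}=p_j$ produces $dp_j$, so indeed these differentials span $T^*_\zeta(T^*M)$ at each $\zeta$.

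First I would recall from Appendix \ref{app: lifts} (or check directly in the above coordinates) the defining behaviour of the two lifts on this family. The horizontal lift for a connection $\nabla$ satisfies $X^\text{h}(\pr^*f)=\pr^*(Xf)$ and $X^\text{h}(\ell_Y)=\ell_{\nabla_XY}$, the first expressing that $X^\text{h}$ projects to $X$ and the second that it is horizontal. The vertical lift of an endomorphism $A\in\Gamma(\en TM)$ is, by construction, tangent to the fibers, hence $A^\upsilon(\pr^*f)=0$, and acts on fiber-linear functions by $A^\upsilon(\ell_Y)=\ell_{AY}$; equivalently, $A^\upsilon$ is the vertical lift of the $1$-form $\zeta\mapsto\zeta\circ A$.

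The comparison is then immediate. The difference $X^{\text{h}'}-X^\text{h}$ is vertical, since both fields project to $X$, so it annihilates every $\pr^*f$, as does $(\iota_X\tau)^\upsilon$. On a fiber-linear function, using $\nabla'_XY=\nabla_XY+\tau(X,Y)$ and $(\iota_X\tau)(Y)=\tau(X,Y)$, the linearity of $Y\mapsto\ell_Y$ gives
\[
X^{\text{h}'}(\ell_Y)=\ell_{\nabla'_XY}=\ell_{\nabla_XY}+\ell_{\tau(X,Y)}=\big(X^\text{h}+(\iota_X\tau)^\upsilon\big)(\ell_Y),
\]
so the two vector fields agree on the whole determining family and are therefore equal. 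I expect no serious obstacle here: the single point requiring care is the sign and duality convention for $(\,\,)^\upsilon$ fixed in Appendix \ref{app: lifts}, which must be the one giving $A^\upsilon(\ell_Y)=\ell_{AY}$ so that the stated formula carries a $+$. As a fail-safe, the identity can be obtained purely in coordinates: writing ${\Gamma'}^k_{ij}=\Gamma^k_{ij}+\tau^k_{ij}$ yields $X^{\text{h}'}-X^\text{h}=X^j\tau^i_{jk}\,p_i\,\partial_{p_k}$, which is exactly $(\iota_X\tau)^\upsilon$.
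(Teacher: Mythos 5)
Your proof is correct and follows essentially the same route as the paper: the paper also verifies the identity by acting on the fibre-linear functions $Y^v=\ell_Y$ (using $X^{\text{h}}Y^v=(\nabla_XY)^v$ and $A^\upsilon Y^v=(AY)^v$ from Lemmas \ref{lem: hor and ver} and the definition of $(\,\,)^\upsilon$) and then invokes Proposition \ref{prop: vliftvfield}, which says these functions alone already determine a vector field on $T^*M$, so your additional check on the $\pr^*f$ is harmless but not needed.
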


\begin{proof}
 By Lemma \ref{lem: hor and ver}, we have 
 \begin{equation*}
 X^{\text{h}'}Y^v=(\nabla'_XY)^v=(\nabla_XY+\tau(X,Y))^v=(X^\text{h}+(\iota_X\tau)^\upsilon)Y^v
 \end{equation*}
 for any $Y\in\mathfrak{X}(M)$. Therefore, by Proposition \ref{prop: vliftvfield}, $X^{\text{h}'}=X^\text{h}+(\iota_X\tau)^\upsilon$.
\end{proof}

\begin{lemma}\label{lem: PW and End}
 Let $\nabla$ be a connection on $M$. For $A\in\Gamma(\en TM)$, we have
 \begin{align*}
 g\nb(A^\upsilon ,\alpha^\emph{v})&=0, & g\nb(A^\upsilon ,X^\text{h})=(AX)^v.
 \end{align*}
\end{lemma}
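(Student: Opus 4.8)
The plan is to reduce both identities to a pointwise description of the Patterson-Walker metric. First I would record that, by Definition \ref{def:PW-metric}, at each $\zeta\in T^*M$ with $m=\pr(\zeta)$ the metric $g_\nabla$ agrees, under the connection-induced splitting $T_\zeta(T^*M)=\mathcal{H}_\nabla|_\zeta\oplus\mathcal{V}|_\zeta\cong T_mM\oplus T^*_mM$, with the canonical pairing $\la\,,\,\ra_+$ of \eqref{eq:pairing}. Indeed, the values $(\phi_\nabla a)(\zeta)=a(m)$ span $T_\zeta(T^*M)$ as $a$ ranges over $\Gamma(TM\oplus T^*M)$, and $g_\nabla((\phi_\nabla a)(\zeta),(\phi_\nabla b)(\zeta))=\la a(m),b(m)\ra_+$. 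Under this splitting one has $X^\text{h}|_\zeta\leftrightarrow X_m\in T_mM$ and $\alpha^\text{v}|_\zeta\leftrightarrow\alpha_m\in T^*_mM$.

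Next I would recall from Appendix \ref{app: lifts} the pointwise form of $A^\upsilon$: being vertical, $A^\upsilon|_\zeta$ corresponds under $\mathcal{V}|_\zeta\cong T^*_mM$ to the covector $\zeta\circ A=A^*\zeta$. The essential feature to keep in mind is that, unlike $\alpha^\text{v}$, the vector field $A^\upsilon$ is \emph{not} a pullback lift $\phi_\nabla(\,\cdot\,)$: its fibre value varies linearly with $\zeta$. Consequently Definition \ref{def:PW-metric} cannot be applied to it verbatim, and the evaluation must be carried out pointwise (or, equivalently, after decomposing $A^\upsilon$ into vertical lifts, as below).

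With these identifications the two evaluations are immediate. Both $A^\upsilon|_\zeta$ and $\alpha^\text{v}|_\zeta$ lie in the purely cotangent summand $T^*_mM$, so
\[
g_\nabla(A^\upsilon,\alpha^\text{v})|_\zeta=\la A^*\zeta,\alpha_m\ra_+=0,
\]
since $\la\,,\,\ra_+$ pairs the tangent part of one argument with the cotangent part of the other. For the second identity, $A^\upsilon|_\zeta$ is cotangent and $X^\text{h}|_\zeta$ is tangent, whence
\[
g_\nabla(A^\upsilon,X^\text{h})|_\zeta=\la A^*\zeta,X_m\ra_+=(A^*\zeta)(X_m)=\zeta(AX_m)=(AX)^v|_\zeta,
\]
where in the last step I recognise the fibrewise-linear function $\zeta\mapsto\zeta(AX)$ as $(AX)^v$.

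To avoid pointwise reasoning altogether, I would alternatively work in a natural chart $(x^i,p_i)$ and write $A^\upsilon=(\pr^*A^i_j)\,p_i\,(\dif x^j)^\text{v}$, exhibiting $A^\upsilon$ as a $\cCi(T^*M)$-linear combination of the vertical lifts $(\dif x^j)^\text{v}$. Then $\cCi(T^*M)$-bilinearity of $g_\nabla$ reduces everything to the known values $g_\nabla((\dif x^j)^\text{v},\alpha^\text{v})=\pr^*\la \dif x^j,\alpha\ra_+=0$ and $g_\nabla((\dif x^j)^\text{v},X^\text{h})=\pr^*(\dif x^j(X))=\pr^*X^j$; summing against $p_iA^i_j$ recovers $0$ and $(AX)^v=p_i(AX)^i$, respectively. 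The only genuine obstacle is the very first step: pinning down the correct fibre-linear description of $A^\upsilon$ and justifying that $g_\nabla$ may be evaluated against a vector field outside the image of $\phi_\nabla$. Once this is settled, both identities follow by inspection.
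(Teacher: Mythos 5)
Your proposal is correct and follows essentially the same route as the paper: the first identity via isotropy of the vertical subbundle, and the second by identifying the fibre value of $A^\upsilon$ with the covector $\zeta\circ A$ and pairing it against $X$. Your coordinate alternative, writing $A^\upsilon=p_i(\pr^*A^i_j)(\dif x^j)^{\text{v}}$ and invoking $\cCi(T^*M)$-bilinearity, is precisely the paper's argument, and your pointwise version is just an invariant restatement of it.
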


\begin{proof}
 The first identity follows easily from the fact that vertical subbundle is isotropic. For the second one, in natural coordinates $(T^*U, \lbrace x^i\rbrace\cup\lbrace p_j\rbrace)$, we have
 \begin{align*}
 \rest{A}{U}&=A^i_j\dif x^j\otimes\partial_{x^i}, & \rest{A^\upsilon }{T^*U}&=p_iA^i_j\partial_{p_j}=p_i(\iota_{\dif x^i}A)^\text{v}.
 \end{align*}
 Therefore, $\rest{g\nb(A^\upsilon , X^\text{h})}{T^*U}=p_i\dif x^i(AX)=\rest{(AX)^v}{T^*U}$.
\end{proof}

\begin{proposition}\label{prop: PW and PW'}
 Given two connections on $M$, their Patterson-Walker metrics coincide if and only if the associated torsion-free connections are the same.
\end{proposition}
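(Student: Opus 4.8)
The plan is to parametrize the difference between the two connections by a single difference tensor and track how it propagates through the definition of $g_\nabla$. Write $\nabla'_XY=\nabla_XY+\tau(X,Y)$ for the unique $\tau\in\Gamma(\otimes^2T^*M\otimes TM)$ recording the difference. By \eqref{eq:assoc-tf-con}, the associated torsion-free connections satisfy
\[
\nabla'^0_XY-\nabla^0_XY=\tfrac{1}{2}\big(\tau(X,Y)+\tau(Y,X)\big)=(\sym\tau)(X,Y),
\]
so $\nabla^0=\nabla'^0$ precisely when $\tau$ is skew-symmetric, i.e. when $\sym\tau=0$. Thus the whole proposition reduces to proving that $g_\nabla=g_{\nabla'}$ if and only if $\sym\tau=0$.

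Next I would reduce the comparison of the two metrics to a local spanning set. Since both metrics are $\cCi(T^*M)$-bilinear and, as recalled after \eqref{eq:pairing}, the horizontal lifts $X^\text{h}$ (with respect to $\nabla$) together with the vertical lifts $\alpha^\text{v}$ of $1$-forms generate $\mathfrak{X}(T^*M)$ locally, it suffices to compute and compare $g_\nabla$ and $g_{\nabla'}$ on all pairs drawn from $\{X^\text{h},\alpha^\text{v}\}$. From Definition \ref{def:PW-metric} and the pairing \eqref{eq:pairing}, the reference values are $g_\nabla(X^\text{h},Y^\text{h})=0$, $g_\nabla(\alpha^\text{v},\beta^\text{v})=0$, and $g_\nabla(X^\text{h},\alpha^\text{v})=\pr^*\alpha(X)$.

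The heart of the argument is to re-express $g_{\nabla'}$ on these \emph{same} $\nabla$-lifts. By Lemma \ref{lem: hor lifts} the two horizontal lifts are related by $X^\text{h}=X^{\text{h}'}-(\iota_X\tau)^\upsilon$, where $\iota_X\tau\in\Gamma(\End TM)$, while the vertical lifts $\alpha^\text{v}$ are connection-independent. Substituting and using that the vertical subbundle is isotropic together with Lemma \ref{lem: PW and End} applied to $\nabla'$, the vertical-vertical and mixed pairings are unchanged: $g_{\nabla'}(\alpha^\text{v},\beta^\text{v})=0$, and $g_{\nabla'}(X^\text{h},\alpha^\text{v})=\pr^*\alpha(X)$ because $g_{\nabla'}\big((\iota_X\tau)^\upsilon,\alpha^\text{v}\big)=0$. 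The only pairing that changes is the horizontal-horizontal one: expanding $g_{\nabla'}\big(X^{\text{h}'}-(\iota_X\tau)^\upsilon,\,Y^{\text{h}'}-(\iota_Y\tau)^\upsilon\big)$, and using $g_{\nabla'}(X^{\text{h}'},Y^{\text{h}'})=0$, the isotropy of the vertical subbundle, and the identity $g_{\nabla'}(A^\upsilon,X^{\text{h}'})=(AX)^\text{v}$ from Lemma \ref{lem: PW and End}, the two surviving cross terms give
\[
g_{\nabla'}(X^\text{h},Y^\text{h})=-\big(\tau(X,Y)+\tau(Y,X)\big)^\text{v}=-\big(2(\sym\tau)(X,Y)\big)^\text{v}.
\]

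Finally I would conclude. Since the vertical lift of a vector field to a fiberwise-linear function on $T^*M$ is injective (a vector field vanishing nowhere pairs nontrivially with some covector), the right-hand side vanishes identically for all $X,Y$ if and only if $\sym\tau=0$. Comparing with $g_\nabla(X^\text{h},Y^\text{h})=0$, and since all other pairings already agree, we obtain $g_\nabla=g_{\nabla'}$ if and only if $\sym\tau=0$, which by the first paragraph is exactly the equality of the associated torsion-free connections. I expect the only delicate point to be the bookkeeping of connection-dependence: one must test $g_{\nabla'}$ against the $\nabla$-lifts (not the $\nabla'$-lifts) so that the comparison with $g_\nabla$ takes place on a common generating set, and must be careful to invoke Lemma \ref{lem: PW and End} for the connection $\nabla'$ when handling the cross terms.
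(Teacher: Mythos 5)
Your proof is correct and follows essentially the same route as the paper: both reduce to the difference tensor $\tau$, use Lemma \ref{lem: hor lifts} to relate the two horizontal lifts, and apply Lemma \ref{lem: PW and End} together with the isotropy of $\mathcal{H}$ and $\mathcal{V}$ to isolate the term $(\tau(X,Y)+\tau(Y,X))^v$. The only (immaterial) difference is the mirror-image bookkeeping: the paper evaluates $g_\nabla$ on the $\nabla'$-lifts, whereas you evaluate $g_{\nabla'}$ on the $\nabla$-lifts.
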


\begin{proof}
 Let $\nabla$ and $\nabla'$ be two connections on $M$ related to each other by the difference tensor field $\tau\in\Gamma(\otimes^2T^*M\otimes TM)$. Clearly, $\rest{g\nb}{\Sym^2\mathcal{V}}=0=\rest{g_{\s{$\nabla'$}}}{\Sym^2\mathcal{V}}$. Thanks to Lemma \ref{lem: hor lifts} and that $\mathcal{V}$ is isotropic, we find
 \begin{equation*}
 g\nb(X^{\text{h}'},\beta^v)=g\nb(X^\text{h}+(\iota_X\tau)^\upsilon,\beta^\text{v})=\pr^*\beta(X)=g_{\s{$\nabla'$}}(X^{\text{h}'},\beta^v).
 \end{equation*}
 Taking into account the fact that also the horizontal subbundle is isotropic,
 \begin{equation*}
 g\nb(X^{\text{h}'},Y^{\text{h}'})=g\nb(X^\text{h}+(\iota_X\tau)^\upsilon,Y^\text{h}+(\iota_Y\tau)^\upsilon)=g\nb(X^{\text{h}},(\iota_Y\tau)^\upsilon)+g\nb((\iota_X\tau)^\upsilon,Y^\text{h}).
 \end{equation*}
 It follows from Lemma \ref{lem: PW and End} that
 \begin{equation*}
 g\nb(X^{\text{h}'},Y^{\text{h}'})=(\tau(Y,X)+\tau(X,Y))^v,
 \end{equation*}
 which vanishes (i.e., it is equal to $g_{\nabla'}(X^{\text{h}'}, Y^{\text{h}'})$) if and only if $\tau\in\Omega^2(M, TM)$. Equivalently, $\nabla'^0=\nabla^0$ because
 \begin{equation*}
 \nabla'^0_XY-\nabla^0_XY=\frac{1}{2}(\pg{X,Y}'_s-[X,Y]_s)=\frac{1}{2}(\tau(X,Y)+\tau(Y,X)).
 \end{equation*}
\end{proof}

Thanks to Proposition \ref{prop: PW and PW'}, we can, without loss of generality, consider only torsion-free connections, as we do for symmetric Cartan calculus.

\begin{remark} In natural local coordinates $(T^*U,\lbrace x^j\rbrace\cup\lbrace p_j\rbrace)$ on $T^*M$, Definition \ref{def:PW-metric} gives that the Patterson-Walker metric takes the form
\begin{equation}\label{eq:PW-coordinates}
 \rest{g\nb}{T^*U}=\dif p_i\odot\dif x^i-p_k(\pr^*\Gamma^k_{ij})\,\dif x^i\odot\dif x^j,
\end{equation}
where $\lbrace\Gamma^k_{ij}\rbrace$ are the Christoffel symbols of $\nabla$ in the chart $(U,\lbrace x^j\rbrace)$. 
This recovers the original definition in \cite{PatRE}. 
\end{remark}

\subsection{Reformulation using symmetric Cartan calculus}\label{sec:reformulation-sym-Cartan-calculus} We use a natural lift of the connection $\nabla$ from $M$: the connection $\hat{\nabla}$ on the manifold $T^*M$ that is uniquely determined by

\begin{equation}\label{eq: lift of nabla}
 \begin{aligned}
 \hat{\nabla}_{X^\text{h}}Y^\text{h}&\coloneqq (\nabla_XY)^\text{h}, & &&&&&&\hat{\nabla}_{\alpha^\text{v}}X^\text{h}&\coloneqq 0,\\
 \hat{\nabla}_{X^\text{h}}\alpha^\text{v}&\coloneqq (\nabla_X\alpha)^\text{v}, & &&&&&&\hat{\nabla}_{\alpha^\text{v}}\beta^\text{v}&\coloneqq 0.
\end{aligned}
\end{equation}

 With the lift $\hat{\nabla}$ we can prove the following.
 
\begin{theorem}\label{thm: PW}
 For any connection $\nabla$ on $M$, there holds
 \begin{equation}\label{eq:nablas-acan-gnabla}
\hat{\nabla}^s\alpha_\emph{can}=g\nb,
 \end{equation}
 where $\alpha_\text{can}$ is the canonical $1$-form on $T^*M$.
\end{theorem}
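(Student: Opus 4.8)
The plan is to verify the tensorial identity \eqref{eq:nablas-acan-gnabla} by evaluating both sides on a local generating set of vector fields on $T^*M$. Since both $\hat\nabla^s\alpha_\emph{can}$ and $g_\nabla$ are genuine symmetric $2$-tensors on $T^*M$ and, as recalled after Definition \ref{def:PW-metric}, the horizontal lifts $X^\text{h}$ and vertical lifts $\alpha^\text{v}$ together generate $\mathfrak{X}(T^*M)$ locally over $\cCi(T^*M)$, it suffices to compare the two sides on the three types of pairs $(X^\text{h},Y^\text{h})$, $(X^\text{h},\beta^\text{v})$ and $(\alpha^\text{v},\beta^\text{v})$, for $X,Y\in\mathfrak{X}(M)$ and $\alpha,\beta\in\Omega^1(M)$. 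On the right-hand side these values are read off directly from Definition \ref{def:PW-metric}: $g_\nabla(X^\text{h},Y^\text{h})=0$, $g_\nabla(X^\text{h},\beta^\text{v})=\pr^*(\beta(X))$ and $g_\nabla(\alpha^\text{v},\beta^\text{v})=0$. On the left-hand side I will use the first-order formula for the symmetric derivative of a $1$-form (as in the proof of Lemma \ref{lem:sym-der-torsion-free}, valid for an arbitrary connection, which matters here since the lift $\hat\nabla$ need not be torsion-free),
\begin{equation*}
(\hat\nabla^s\alpha_\emph{can})(\mathcal{X},\mathcal{Y})=\mathcal{X}\big(\alpha_\emph{can}(\mathcal{Y})\big)+\mathcal{Y}\big(\alpha_\emph{can}(\mathcal{X})\big)-\alpha_\emph{can}\big(\hat\nabla_{\mathcal{X}}\mathcal{Y}+\hat\nabla_{\mathcal{Y}}\mathcal{X}\big),
\end{equation*}
together with the defining relations \eqref{eq: lift of nabla} for $\hat\nabla$.

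The computation rests on a few identities for the canonical $1$-form. Writing $\mathcal{P}_X\in\cCi(T^*M)$ for the fibrewise-linear function $\mathcal{P}_X(\zeta):=\zeta(X_{\pr\zeta})$, these are
\begin{align*}
&\alpha_\emph{can}(X^\text{h})=\mathcal{P}_X, & &\alpha_\emph{can}(\alpha^\text{v})=0, & &\alpha^\text{v}(\mathcal{P}_Y)=\pr^*(\alpha(Y)), & &X^\text{h}(\mathcal{P}_Y)=\mathcal{P}_{\nabla_X Y}.
\end{align*}
The first two follow from the tautological property $\alpha_\emph{can}|_\zeta=\zeta\circ\pr_{*}$ together with $\pr_*X^\text{h}=X$ and $\pr_*\alpha^\text{v}=0$; the third is immediate because $\alpha^\text{v}$ is the vertical (fibrewise) derivative and $\mathcal{P}_Y$ is linear along the fibres. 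The last identity is the crucial one and carries the geometric content.

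Granting these, the three cases are short. For $(X^\text{h},Y^\text{h})$, using $\hat\nabla_{X^\text{h}}Y^\text{h}=(\nabla_XY)^\text{h}$ and $\alpha_\emph{can}((\nabla_XY)^\text{h})=\mathcal{P}_{\nabla_XY}$, the formula yields $\mathcal{P}_{\nabla_XY}+\mathcal{P}_{\nabla_YX}-\mathcal{P}_{\nabla_XY}-\mathcal{P}_{\nabla_YX}=0=g_\nabla(X^\text{h},Y^\text{h})$. For $(X^\text{h},\beta^\text{v})$, the term $\alpha_\emph{can}(\beta^\text{v})$ vanishes, and both $\hat\nabla_{X^\text{h}}\beta^\text{v}=(\nabla_X\beta)^\text{v}$ and $\hat\nabla_{\beta^\text{v}}X^\text{h}=0$ are annihilated by $\alpha_\emph{can}$, so the only surviving term is $\beta^\text{v}(\mathcal{P}_X)=\pr^*(\beta(X))=g_\nabla(X^\text{h},\beta^\text{v})$. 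For $(\alpha^\text{v},\beta^\text{v})$ every term vanishes, since $\alpha_\emph{can}$ kills vertical lifts and $\hat\nabla_{\alpha^\text{v}}\beta^\text{v}=0=\hat\nabla_{\beta^\text{v}}\alpha^\text{v}$, matching $g_\nabla(\alpha^\text{v},\beta^\text{v})=0$. By polarization over these generating sections this establishes \eqref{eq:nablas-acan-gnabla}.

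The main obstacle is the identity $X^\text{h}(\mathcal{P}_Y)=\mathcal{P}_{\nabla_XY}$. I would prove it coordinate-freely: the horizontal lift $X^\text{h}_\zeta$ is the initial velocity of the parallel-transported covector $t\mapsto\zeta(t)$ along a curve $\gamma$ with $\dot\gamma(0)=X$, so that $\nabla_{\dot\gamma}\zeta(t)=0$; compatibility of the dual pairing with the connection then gives
\begin{equation*}
X^\text{h}(\mathcal{P}_Y)(\zeta)=\rest{\tfrac{\dif}{\dif t}}{t=0}\zeta(t)\big(Y_{\gamma(t)}\big)=\zeta(\nabla_XY)=\mathcal{P}_{\nabla_XY}(\zeta).
\end{equation*}
Alternatively, it is a one-line verification in natural coordinates $(x^i,p_j)$ using the horizontal lift $X^\text{h}=X^i\partial_{x^i}+p_k\,\Gamma^k_{ij}X^i\,\partial_{p_j}$ and $\mathcal{P}_Y=p_iY^i$. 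Note that torsion-freeness is used nowhere, consistent with the statement holding for an arbitrary $\nabla$ and with Proposition \ref{prop: PW and PW'}, as both sides of \eqref{eq:nablas-acan-gnabla} depend only on the associated torsion-free connection $\nabla^0$.
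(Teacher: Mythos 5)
Your proposal is correct and follows essentially the same route as the paper: both evaluate $\hat\nabla^s\alpha_{\text{can}}$ on the three types of pairs of horizontal and vertical lifts using the first-order formula $(\hat\nabla^s\alpha_{\text{can}})(\mathcal{X},\mathcal{Y})=\mathcal{X}(\alpha_{\text{can}}(\mathcal{Y}))+\mathcal{Y}(\alpha_{\text{can}}(\mathcal{X}))-\alpha_{\text{can}}(\hat\nabla_{\mathcal{X}}\mathcal{Y}+\hat\nabla_{\mathcal{Y}}\mathcal{X})$, the defining relations for $\hat\nabla$, and the identities $\alpha_{\text{can}}(X^\text{h})=X^v$, $X^\text{h}Y^v=(\nabla_XY)^v$, $\beta^\text{v}X^v=\pr^*\beta(X)$ (your $\mathcal{P}_X$ is the paper's $X^v$, and these are exactly Lemmas \ref{lem: hor and ver coord} and \ref{lem: hor and ver} of the appendix, which you reprove via parallel transport). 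No gaps; the argument is complete.
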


\begin{proof}
 We use the properties of the lifts that are proved in Appendix \ref{app: lifts}. By Lemmas \ref{lem: hor and ver coord} and \ref{lem: hor and ver}, we find
 \begin{align*}
X^\text{h}\alpha_\text{can}(Y^\text{h})&=X^\text{h}Y^v=(\nabla_XY)^v=\alpha_\text{can}((\nabla_XY)^\text{h}),\\
\beta^\text{v}\alpha_\text{can}(X^\text{h})&=\beta^\text{v}X^v=\pr^*\beta(X).
 \end{align*}
Using these together with the fact that $\alpha_\text{can}$ annihilates $\mathcal{V}$, we get
 \begin{align*}
 (\hat{\nabla}^s\alpha_\text{can})(X^\text{h}, Y^\text{h})&=\alpha_\text{can}(\pg{X,Y}^\text{h}_s-\pg{X^\text{h},Y^\text{h}}_{\hat{\nabla}^s}),\\
 (\hat{\nabla}^s\alpha_\text{can})(\alpha^\text{v},\beta^\text{v})&=-\alpha_\text{can}(\pg{\alpha^\text{v},\beta^\text{v}}_{\hat{\nabla}^s}),\\
 (\hat{\nabla}^s\alpha_\text{can})(X^\text{h},\beta^\text{v})&=\pr^*\beta(X)-\alpha_\text{can}(\pg{X^\text{h},\beta^\text{v}}_{\hat{\nabla}^s}),
 \end{align*}
 where $\pg{\,\,,\,\,}_{\hat{\nabla}^s}$ stands for the symmetric bracket corresponding to $\hat{\nabla}$. The terms with $\alpha_\text{can}$ on the right-hand sides of the above equations vanish by the definition of the connection $\hat{\nabla}$ and the result follows.
 \end{proof}

 Note that the connection $\hat{\nabla}$ defined by \eqref{eq: lift of nabla} is not the unique connection on $T^*M$ satisfying \eqref{eq:nablas-acan-gnabla}. To start with, we can take any other connection on $T^*M$ with the same associated torsion-free connection. However, there is actually much more freedom.

 \begin{lemma}\label{lem:con-D-satisfying-nablas-acan-gnabla}
 A connection $\mathcal{D}$ on $T^*M$ satisfies \eqref{eq:nablas-acan-gnabla} if and only if
\begin{equation}\label{eq: nablas PW condition}\pg{X^\emph{h}+\alpha^\emph{v},Y^\emph{h}+\beta^\emph{v}}_{\mathcal{D}^s}-[X,Y]_s^\emph{h}\in\Gamma(\mathcal{V}),
\end{equation}
where $\pg{\,\,,\,\,}_{\mathcal{D}^s}$ stands for the symmetric bracket corresponding to $\mathcal{D}$.
 \end{lemma}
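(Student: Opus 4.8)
The statement characterizes those connections $\mathcal{D}$ on $T^*M$ whose symmetric derivative sends the canonical $1$-form to the Patterson-Walker metric. The approach is to evaluate $\hat{\nabla}^s$ being replaced by $\mathcal{D}^s$ on $\alpha_{\text{can}}$ and compare the defining formula $g_\nabla = \mathcal{D}^s\alpha_{\text{can}}$ pointwise against the three types of lifted vector fields $X^{\text{h}}, \alpha^{\text{v}}$, using the explicit expression for the symmetric derivative in terms of the symmetric bracket. The plan is to mimic the proof of Theorem~\ref{thm: PW}, but now run it in reverse, extracting exactly what condition on $\mathcal{D}^s$ is equivalent to recovering $g_\nabla$. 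Let me look at the logic.

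**First I would** record the general formula for the symmetric derivative of a $1$-form in terms of a symmetric bracket. By Lemma~\ref{lem:sym-der-torsion-free}, for any connection $\mathcal{D}$ and any $1$-form $\theta$ on $T^*M$ with vector fields $\mathcal{X},\mathcal{Y}$, we have
\begin{equation*}
(\mathcal{D}^s\theta)(\mathcal{X},\mathcal{Y}) = \mathcal{X}\,\theta(\mathcal{Y}) + \mathcal{Y}\,\theta(\mathcal{X}) - \theta(\pg{\mathcal{X},\mathcal{Y}}_{\mathcal{D}^s}).
\end{equation*}
Applying this with $\theta = \alpha_{\text{can}}$ and using that $\alpha_{\text{can}}$ annihilates the vertical subbundle $\mathcal{V}$ (together with the identities $X^{\text{h}}\alpha_{\text{can}}(Y^{\text{h}}) = (\nabla_X Y)^v = \alpha_{\text{can}}(\pg{X,Y}^{\text{h}}_s) - \text{(symmetric in $X,Y$ terms)}$ established in the proof of Theorem~\ref{thm: PW}), I would reduce the computation of $(\mathcal{D}^s\alpha_{\text{can}})(\mathcal{X},\mathcal{Y})$ on the three types of pairs to exactly the same "leading" terms that reproduce $g_\nabla$, plus a single residual term $-\alpha_{\text{can}}(\pg{\mathcal{X},\mathcal{Y}}_{\mathcal{D}^s})$. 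Since the leading terms already equal $g_\nabla(\mathcal{X},\mathcal{Y})$ for $\hat\nabla$ and depend only on $\nabla$ (not on $\mathcal{D}$), the equation $\mathcal{D}^s\alpha_{\text{can}} = g_\nabla$ holds if and only if the residual correction vanishes, i.e.
\begin{equation*}
\alpha_{\text{can}}\big(\pg{\mathcal{X},\mathcal{Y}}_{\mathcal{D}^s} - \pg{X,Y}^{\text{h}}_s\big) = 0
\end{equation*}
for all pairs built from lifts, where the term $\pg{X,Y}^{\text{h}}_s$ only contributes in the horizontal-horizontal case.

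**Then I would** translate the condition "$\alpha_{\text{can}}(\,\cdot\,)=0$" into a statement about lying in $\mathcal{V}$. The key linear-algebraic fact is that $\ker \alpha_{\text{can}}$ at a point $\zeta\in T^*M$ contains $\mathcal{V}$, so $\alpha_{\text{can}}$ kills a vector field exactly when its horizontal component is annihilated by $\alpha_{\text{can}}$; combined with the fact that $g_\nabla$ pairs the horizontal part nondegenerately, I would argue that $\alpha_{\text{can}}(\mathcal{Z}) = 0$ for the relevant class of $\mathcal{Z}$ is equivalent to $\mathcal{Z}\in\Gamma(\mathcal{V})$. This is what promotes the scalar vanishing condition to the vector-valued condition~\eqref{eq: nablas PW condition}. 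I would handle the three lift-types uniformly by writing $\mathcal{X} = X^{\text{h}}+\alpha^{\text{v}}$, $\mathcal{Y}=Y^{\text{h}}+\beta^{\text{v}}$ and using $\cCi(T^*M)$-bilinearity/derivation properties of the symmetric bracket to reduce a general pair to these generators, invoking that lifts locally generate $\mathfrak{X}(T^*M)$ as in Definition~\ref{def:PW-metric}.

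**The main obstacle** will be the forward (only-if) direction's subtlety: from $\mathcal{D}^s\alpha_{\text{can}}=g_\nabla$ we only directly obtain that $\alpha_{\text{can}}$ vanishes on the bracket-difference, which is a scalar condition, whereas \eqref{eq: nablas PW condition} is a vector-valued membership in $\mathcal{V}$. Bridging this gap requires care: a single $1$-form $\alpha_{\text{can}}$ cannot detect the full vertical component of a vector field. The resolution is that the symmetric bracket $\pg{\,\cdot\,,\,\cdot\,}_{\mathcal{D}^s}$ is itself tensorial in the appropriate sense modulo lower-order lifts, so evaluating $\alpha_{\text{can}}$ on $\pg{\mathcal{X},\mathcal{Y}}_{\mathcal{D}^s}-[X,Y]^{\text{h}}_s$ for all choices of the covector $\zeta$ (i.e. at all points of each fibre $T^*_mM$, where $\alpha_{\text{can}}$ varies) suffices to pin down the horizontal part. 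Concretely, since $(\alpha_{\text{can}})_\zeta = \pr^*\zeta$ ranges over all of $T^*_m M$ as $\zeta$ varies over the fibre, demanding $\alpha_{\text{can}}(\mathcal{Z})=0$ at every $\zeta$ forces the horizontal component of $\mathcal{Z}$ to vanish, which is exactly $\mathcal{Z}\in\Gamma(\mathcal{V})$. I expect this fibrewise-variation argument to be the crux, and I would state it carefully, as it is what makes the scalar condition equivalent to the geometric one.
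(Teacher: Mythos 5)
Your reduction of $\mathcal{D}^s\alpha_{\text{can}}=g_\nabla$ to the scalar condition $\alpha_{\text{can}}\bigl(\pg{X^{\text{h}}+\alpha^{\text{v}},Y^{\text{h}}+\beta^{\text{v}}}_{\mathcal{D}^s}-\pg{X,Y}_s^{\text{h}}\bigr)=0$ is correct and is exactly the route the paper takes (its proof of Lemma \ref{lem:con-D-satisfying-nablas-acan-gnabla} is a one-liner deferring to the computation in Theorem \ref{thm: PW} plus the claim that the kernel of $\alpha_{\text{can}}$ is the vertical subbundle). The ``if'' direction is then immediate, since $\alpha_{\text{can}}$ annihilates $\Gamma(\mathcal{V})$. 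The problem is precisely the step you single out as the crux. You argue that, because $(\alpha_{\text{can}})_\zeta=\pr^*\zeta$ sweeps out all of $T^*_mM$ as $\zeta$ runs through the fibre, the vanishing of $\alpha_{\text{can}}(\mathcal{Z})$ at every $\zeta$ kills the horizontal component of $\mathcal{Z}$. This inference is invalid: the horizontal component of $\mathcal{Z}:=\pg{\mathcal{X},\mathcal{Y}}_{\mathcal{D}^s}-\pg{X,Y}_s^{\text{h}}$ is a section of $\pr^*TM$ over $T^*M$ whose value $W(\zeta)\in T_{\pr(\zeta)}M$ itself varies along the fibre, so the hypothesis only gives $\zeta(W(\zeta))=0$, i.e.\ $W(\zeta)\in\ker\zeta$, not $W(\zeta)=0$. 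Concretely, on $T^*\R^2\cong\R^4$ with the flat connection, the horizontal vector field $\mathcal{Z}=p_2\partial_{x^1}-p_1\partial_{x^2}$ satisfies $\alpha_{\text{can}}(\mathcal{Z})=p_1p_2-p_2p_1=0$ without being vertical; the connection $\mathcal{D}:=\hat{\nabla}+\dif x^1\otimes\dif x^1\otimes\mathcal{Z}$ then satisfies $\mathcal{D}^s\alpha_{\text{can}}=g_\nabla$, yet $\pg{\partial_{x^1}^{\text{h}},\partial_{x^1}^{\text{h}}}_{\mathcal{D}^s}-\pg{\partial_{x^1},\partial_{x^1}}_s^{\text{h}}=2\mathcal{Z}\notin\Gamma(\mathcal{V})$.

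So the gap is not cosmetic. For an arbitrary connection $\mathcal{D}$ on $T^*M$, the difference tensor $\mathcal{D}-\hat{\nabla}$ can depend on the fibre coordinates in an arbitrary way, and no amount of varying $X$, $Y$, $\alpha$, $\beta$ or the base point recovers verticality from the single scalar identity. You are in good company --- the paper's own justification rests on the assertion that ``the kernel of $\alpha_{\text{can}}$ is precisely the vertical subbundle'', which fails pointwise (the kernel of a $1$-form is a hyperplane, while $\mathcal{V}$ has codimension $\dim M$) and, as the example shows, also fails in the relevant section-wise sense --- but your fibrewise-variation argument does not repair it. To make the ``only if'' direction honest one must either weaken \eqref{eq: nablas PW condition} to the scalar condition you actually derive (which is all that is needed for the application to $\bar{\nabla}$, where the stronger condition is verified directly), or impose additional structure on $\mathcal{D}$ constraining its fibrewise dependence.
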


\begin{proof}
 If we follow the same approach as in the proof of Theorem \ref{thm: PW}, we find that the claim is true because the kernel of $\alpha_\text{can}$ is precisely the vertical subbundle.
\end{proof}

 As both $g\nb$ and $\hat{\nabla}$ are naturally associated to the connection $\nabla$ on $M$, it is reasonable to wonder what the relation between $\hat{\nabla}$ and the Levi-Civita connection of $g\nb$ is. We first compute the torsion of the connection $\hat{\nabla}$.

\begin{lemma}\label{lem: hat nabla is not LC}
 Let $\nabla$ be a connection on $M$. For the corresponding connection $\hat{\nabla}$ on $T^*M$, we have that $\hat{\nabla}g\nb=0$, $\rest{T_{\hat{\nabla}}}{\mathcal{H}\nb\wedge \mathcal{V}}=0$, $\rest{T_{\hat{\nabla}}}{\mathcal{V}\wedge \mathcal{V}}=0$ and
 \begin{equation*}
 T_{\hat{\nabla}}(X^\emph{h},Y^\emph{h})=T_\nabla (X,Y)^\emph{h}-R_\nabla (X,Y)^\upsilon.
\end{equation*}
\end{lemma}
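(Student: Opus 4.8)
The plan is to exploit the fact that both $T_{\hat{\nabla}}$ and $\hat{\nabla}g_\nabla$ are tensors on $T^*M$, so each is determined by its values on a local frame; since the horizontal lifts $X^\text{h}$ and vertical lifts $\alpha^\text{v}$ of local frames of $TM$ and $T^*M$ furnish such a frame of $T(T^*M)$, it suffices to evaluate on pairs (for the torsion) and triples (for $\hat{\nabla}g_\nabla$) of lifts. The two ingredients I would feed in are the defining relations \eqref{eq: lift of nabla} for $\hat{\nabla}$ and the Lie bracket identities for lifts recalled in Appendix \ref{app: lifts}, namely (with the conventions of that appendix) $[X^\text{h},Y^\text{h}]=[X,Y]^\text{h}+R_\nabla(X,Y)^\upsilon$, $[X^\text{h},\alpha^\text{v}]=(\nabla_X\alpha)^\text{v}$ and $[\alpha^\text{v},\beta^\text{v}]=0$, together with the values of $g_\nabla$ on lifts, which by Definition \ref{def:PW-metric} are the isotropy of $\mathcal{H}_\nabla$ and $\mathcal{V}$ and the pairing $g_\nabla(X^\text{h},\beta^\text{v})=\pr^*\beta(X)$.

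For the torsion I would simply substitute into $T_{\hat{\nabla}}(\mathcal{X},\mathcal{Y})=\hat{\nabla}_{\mathcal{X}}\mathcal{Y}-\hat{\nabla}_{\mathcal{Y}}\mathcal{X}-[\mathcal{X},\mathcal{Y}]$. On $\mathcal{V}\wedge\mathcal{V}$ and $\mathcal{H}_\nabla\wedge\mathcal{V}$ the covariant-derivative terms either vanish by \eqref{eq: lift of nabla} or cancel against the bracket, giving $T_{\hat{\nabla}}(\alpha^\text{v},\beta^\text{v})=-[\alpha^\text{v},\beta^\text{v}]=0$ and $T_{\hat{\nabla}}(X^\text{h},\beta^\text{v})=(\nabla_X\beta)^\text{v}-[X^\text{h},\beta^\text{v}]=0$. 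On $\mathcal{H}_\nabla\wedge\mathcal{H}_\nabla$ one obtains $T_{\hat{\nabla}}(X^\text{h},Y^\text{h})=(\nabla_XY-\nabla_YX)^\text{h}-[X,Y]^\text{h}-R_\nabla(X,Y)^\upsilon=T_\nabla(X,Y)^\text{h}-R_\nabla(X,Y)^\upsilon$, which is exactly the claimed formula.

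For $\hat{\nabla}g_\nabla=0$ I would run through the frame expansion $(\hat{\nabla}_{\mathcal{Z}}g_\nabla)(\mathcal{X},\mathcal{Y})=\mathcal{Z}\big(g_\nabla(\mathcal{X},\mathcal{Y})\big)-g_\nabla(\hat{\nabla}_{\mathcal{Z}}\mathcal{X},\mathcal{Y})-g_\nabla(\mathcal{X},\hat{\nabla}_{\mathcal{Z}}\mathcal{Y})$ with $\mathcal{Z},\mathcal{X},\mathcal{Y}$ lifts. Most cases are immediate: whenever $\mathcal{Z}=\gamma^\text{v}$ every term dies because vertical vectors annihilate pullback functions and $\hat{\nabla}_{\gamma^\text{v}}$ kills all lifts by \eqref{eq: lift of nabla}; and whenever $\mathcal{X},\mathcal{Y}$ are both horizontal or both vertical, isotropy combined with the fact that $\hat{\nabla}$ maps horizontal lifts to horizontal ones and vertical lifts to vertical ones forces vanishing. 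The only genuinely nontrivial computation is $\mathcal{Z}=Z^\text{h}$, $\mathcal{X}=X^\text{h}$, $\mathcal{Y}=\beta^\text{v}$, where one gets $\pr^*\big(Z(\beta(X))-(\nabla_Z\beta)(X)-\beta(\nabla_ZX)\big)$, and this vanishes precisely by compatibility of $\nabla$ on $M$ with the duality pairing between $TM$ and $T^*M$.

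The computations are all routine once the frame is fixed, so there is no real obstacle; the only point demanding care is bookkeeping — matching the sign and placement of the curvature term in $[X^\text{h},Y^\text{h}]$ against the appendix conventions (this is what produces the $-R_\nabla(X,Y)^\upsilon$), and remembering to invoke the duality-pairing identity in the single mixed case above.
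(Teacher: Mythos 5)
Your proof is correct and follows essentially the same route as the paper: the torsion claims are read off from the Lie bracket identities for horizontal and vertical lifts combined with the defining relations for $\hat{\nabla}$, and metric compatibility is verified on the frame of lifts, the only nontrivial case being the mixed horizontal--vertical one, which reduces to the duality between $\nabla$ on $TM$ and on $T^*M$. The paper merely packages that last computation more compactly by phrasing it as compatibility of the generalized connection $\nabla^{\mathrm{gen}}$ on $TM\oplus T^*M$ with the pairing $\langle\,\,,\,\,\rangle_+$ and then pulling back along $\phi_\nabla$, which is your case analysis in disguise.
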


\begin{proof}
 The claim for the torsion follows directly from Lemma \ref{lem: Lie brackets hor and ver}. 
 
 For $\hat{\nabla}g\nb=0$, we use the shorthand notation $a,b,c\in\Gamma(TM\oplus T^*M)$ and 
 \begin{equation}\label{eq: nabla gen}
 \nabla^\text{gen}_{X+\alpha}(Y+\beta)\coloneqq \nabla_XY+\nabla_X\beta.
 \end{equation}
 The superscript `\text{gen}' will be explained in Section \ref{sec:relation-gen-geo}. We also use $\pr_1$ and $\pr_2$ for the projections to $TM$ and $T^*M$, respectively. 
 
 With this notation, we have the following for $\nabla^\text{gen}$:
 \begin{equation*}
 (\pr_1 a)\la b,c\ra_+-\la\nabla^\text{gen}_a\,b,c\ra_+-\la b,\nabla^\text{gen}_a\,c\ra_+=0.
 \end{equation*}
 Therefore, it follows from \eqref{eq: nabla ca 2} that
 \begin{align*}
 (\hat{\nabla}g\nb)&(\phi\nb a,\phi\nb b,\phi\nb c)\\
 &=(\phi\nb a)g\nb(\phi\nb b, \phi\nb c)-g\nb(\hat{\nabla}_{\phi\nb a}\phi\nb b,\phi\nb c)-g\nb(\phi\nb b,\hat{\nabla}_{\phi\nb a}\phi\nb c)\\
 &=((\pr_1 a)^\text{h}+(\pr_2 a)^\text{v})\pr^*\la b,c\ra_+-g\nb(\phi\nb\nabla^\text{gen}_a\,b,\phi\nb c)-g\nb(\phi\nb b,\phi\nb\nabla^\text{gen}_a\,c),
 \end{align*}
 By Lemma \ref{lem: hor and ver coord},
 \begin{equation*}
 (\hat{\nabla}g\nb)(\phi\nb a,\phi\nb b,\phi\nb c)=\pr^*((\pr_1 a)\la b,c\ra_+-\la \nabla^\text{gen}_a\,b,c\ra_+-\la b,\nabla^\text{gen}_a\,c\ra_+)=0.
 \end{equation*}
\end{proof}

We can now slightly modify $\hat{\nabla}$ to recover the Levi-Civita connection of $g\nb$. Recall that we can restrict to the torsion-free case without loss of generality.

 \begin{proposition}\label{prop: PWLC}
 Let $\nabla$ be a torsion-free connection on $M$. Then the connection $\bar{\nabla}$ on $T^*M$ given by
 \begin{equation*}
\bar{\nabla}_{\phi\nb a}\phi\nb b\coloneqq \hat{\nabla}_{\phi\nb a}\phi\nb b-(R_\nabla (\pr_1 b,\,\,)(\pr_1 a))^\upsilon.
 \end{equation*}
 is the Levi-Civita connection of the Patterson-Walker metric $g\nb$.
 \end{proposition}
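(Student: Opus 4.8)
The plan is to invoke the uniqueness of the Levi-Civita connection: since $g_\nabla$ is a (split-signature) metric, it suffices to show that $\bar{\nabla}$ is a torsion-free connection on $T^*M$ that is compatible with $g_\nabla$. First I would check that $\bar{\nabla}$ is genuinely a connection by noting that its difference with $\hat{\nabla}$, namely $S(a,b):=(R_\nabla(\pr_1 b,\,\,)(\pr_1 a))^\upsilon$, depends on its arguments only through their horizontal parts $\pr_1 a,\pr_1 b$; via the isomorphism $T(T^*M)\cong\pr^*(TM\oplus T^*M)$ the horizontal-part map $\mathfrak{X}(T^*M)\to\Gamma(\pr^*TM)$ is $\cCi(T^*M)$-linear, so $S$ is $\cCi(T^*M)$-bilinear and tensorial, and $\bar{\nabla}:=\hat{\nabla}-S$ is a connection. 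Since both the torsion $T_{\bar{\nabla}}$ and the tensor $\bar{\nabla}g_\nabla$ are tensorial, it is enough to verify the two defining properties on sections of the form $\phi_\nabla a$, which generate $\mathfrak{X}(T^*M)$ locally.

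For metric compatibility I would write $\bar{\nabla}_{\phi_\nabla a}\phi_\nabla b=\hat{\nabla}_{\phi_\nabla a}\phi_\nabla b-S(a,b)$ and use that Lemma \ref{lem: hat nabla is not LC} already gives $\hat{\nabla}g_\nabla=0$. Then $(\bar{\nabla}_{\phi_\nabla a}g_\nabla)(\phi_\nabla b,\phi_\nabla c)$ collapses to $g_\nabla(S(a,b),\phi_\nabla c)+g_\nabla(\phi_\nabla b,S(a,c))$. Evaluating each summand with Lemma \ref{lem: PW and End}, which kills the pairing of a vertical endomorphism lift with a vertical $1$-form and turns its pairing with a horizontal lift $X^\text{h}$ into $(AX)^v$, the two summands become $(R_\nabla(\pr_1 b,\pr_1 c)\pr_1 a)^v$ and $(R_\nabla(\pr_1 c,\pr_1 b)\pr_1 a)^v$, whose sum vanishes by the antisymmetry of $R_\nabla$ in its first two arguments. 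Hence $\bar{\nabla}g_\nabla=0$ (and, notably, this step does not even use torsion-freeness of $\nabla$).

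For torsion-freeness I would start from $T_{\bar{\nabla}}(\phi_\nabla a,\phi_\nabla b)=T_{\hat{\nabla}}(\phi_\nabla a,\phi_\nabla b)-S(a,b)+S(b,a)$, where the Lie-bracket terms of the two connections cancel. By Lemma \ref{lem: hat nabla is not LC} and $T_\nabla=0$, the torsion of $\hat{\nabla}$ vanishes on $\mathcal{H}_\nabla\wedge\mathcal{V}$ and on $\mathcal{V}\wedge\mathcal{V}$, where $S(a,b)$ and $S(b,a)$ also vanish because $S$ is built from $\pr_1$; so the only nontrivial case is horizontal-horizontal. There $T_{\hat{\nabla}}(X^\text{h},Y^\text{h})=-R_\nabla(X,Y)^\upsilon$, while $-S(a,b)+S(b,a)$ contributes the vertical lift of the endomorphism $Z\mapsto -R_\nabla(Y,Z)X+R_\nabla(X,Z)Y$. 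Thus $T_{\bar{\nabla}}(X^\text{h},Y^\text{h})$ is the vertical lift of $Z\mapsto -\big(R_\nabla(X,Y)Z+R_\nabla(Y,Z)X+R_\nabla(Z,X)Y\big)$, which is zero by the first (algebraic) Bianchi identity, valid precisely because $\nabla$ is torsion-free. Together with the previous paragraph, uniqueness of the Levi-Civita connection yields the claim.

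The main obstacle is bookkeeping rather than conceptual: one must fix the precise meaning of $R_\nabla(\pr_1 b,\,\,)(\pr_1 a)$ as the endomorphism $Z\mapsto R_\nabla(\pr_1 b,Z)\pr_1 a$ and track every sign so that the curvature terms assemble exactly into the antisymmetry relation (for metric compatibility) and into the cyclic Bianchi sum (for torsion-freeness). Once the conventions are pinned down, everything else follows mechanically from Lemmas \ref{lem: hat nabla is not LC} and \ref{lem: PW and End}.
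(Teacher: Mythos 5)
Your proposal is correct and follows essentially the same route as the paper: both verify metric compatibility via $\hat{\nabla}g_\nabla=0$, the isotropy of $\mathcal{V}$, Lemma \ref{lem: PW and End} and the antisymmetry of $R_\nabla$, and verify torsion-freeness by combining $T_{\hat{\nabla}}(X^\text{h},Y^\text{h})=-R_\nabla(X,Y)^\upsilon$ with the skew part of the correction term and the algebraic Bianchi identity. The only additions are your explicit checks that the correction term is tensorial and the explicit appeal to uniqueness of the Levi-Civita connection, both of which the paper leaves implicit.
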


 \begin{proof}
 It follows directly from Lemma \ref{lem: hat nabla is not LC} that $\rest{T_{\bar{\nabla}}}{\mathcal{H}\nb\wedge \mathcal{V}}=0$, $\rest{T_{\bar{\nabla}}}{\mathcal{V}\wedge \mathcal{V}}=0$ and
 \begin{equation*}
 T_{\bar{\nabla}}(X^\text{h},Y^\text{h})=T_\nabla (X,Y)^\text{h}-R_\nabla (X,Y)^\upsilon-(R_\nabla (Y,\,\,)X)^\upsilon+(R_\nabla (X,\,\,)Y)^\upsilon,
 \end{equation*}
 which vanishes by the torsion-freeness of $\nabla$ and the algebraic Bianchi identity. Using $\hat{\nabla}g\nb=0$, see Lemma \ref{lem: hat nabla is not LC}, and that $\mathcal{V}$ is isotropic, we find
 \begin{align*}
 (\bar{\nabla}g\nb)&(\phi\nb a,\phi\nb b,\phi\nb c)\\
 &=g\nb((R_\nabla (\pr_1 b,\,\,)(\pr_1 a))^\upsilon,(\pr_1 c)^\text{h})+g\nb((\pr_1 b)^\text{h},(R_\nabla (\pr_1 c,\,\,)(\pr_1 a))^\upsilon).
 \end{align*}
 Finally, using Lemma \ref{lem: PW and End}, we find
 \begin{equation*}
(\bar{\nabla}g\nb)(\phi\nb a,\phi\nb b,\phi\nb c)=(R_\nabla (\pr_1 b,\pr_1 c)\pr_1 a+R_\nabla (\pr_1 c,\pr_1 b)\pr_1a)^v=0.
 \end{equation*}
 \end{proof}

A subtle point here is that the connection $\bar{\nabla}=\lcn{g\nb}$ is not the associated torsion-free connection to $\hat{\nabla}$. But it clearly satisfies condition \eqref{eq: nablas PW condition} in Lemma \ref{lem:con-D-satisfying-nablas-acan-gnabla}, so
\begin{equation*}
\bar{\nabla}^s\alpha_\text{can}=g\nb.
\end{equation*}
 
\begin{remark}\label{rem:PW-bracket}
Another reformulation of the Patterson-Walker metric using the symmetric bracket can be directly deduced from \cite[\S 17]{Patlift}, but it involves the notions of vertical lift $X^v\in \cCi(T^*M)$ and complete lift $X^c\in\mathfrak{X}(T^*M)$ of a vector field $X\in\mathfrak{X}(M)$. They can be defined by $X^v(\zeta)\coloneqq \zeta(X_{\pr(\zeta)})$
for $\zeta\in T^*M$ and $X^c=-\ham X^v$ (see Appendix \ref{app: lifts} for more details). With these notions, one has that $g\nb$ is determined (thanks to Proposition \ref{prop-determined-by-complete}) by
 \begin{equation}\label{eq: PW2}
 g\nb(X^c,Y^c)=-[X,Y]_s^v.
 \end{equation}
\end{remark}

 Note the striking similarity between the Patterson-Walker metric and the canonical symplectic form $\omega_\text{can}$:
 \begin{align*}
 \rest{\omega_\text{can}}{T^*U}&=\dif p_i\wedge\dif x^i, & \dif\alpha_\text{can}&=\omega_\text{can}, & \omega_\text{can}(X^c,Y^c)&=-[X,Y]^v. 
 \end{align*}
 This analogy is explained by the fact that $\omega_\text{can}$ may be alternatively defined by 
 \begin{equation*}
 \omega_\text{can}(\phi\nb a,\phi\nb b)=\pr^*\la a,b\ra_-,
 \end{equation*}
 where $\la \,\,,\,\,\ra_-$ is the canonical skew-symmetric pairing on $TM\oplus T^*M$:
 \begin{equation}\label{eq:skew-sym-pairing}
 \la X+\alpha,Y+\beta\ra_-\coloneqq \alpha(Y)-\beta(X). 
 \end{equation}
 Unlike the Patterson-Walker metric, the canonical symplectic form is independent of the choice of torsion-free connection on $M$. However, considering connections with non-vanishing torsion gives rise to different non-degenerate $2$-forms on $T^*M$, which are closed if and only if the connection is torsion-free.

\subsection{Relation to generalized geometry}\label{sec:relation-gen-geo}

 We briefly hint the relation to generalized geometry (in the sense of \cite{HitGCYM, GuaGCG}), which will be explored in forthcoming works. This section can be skipped on a first reading.

 To start with, the bundle $TM\oplus T^*M$ and the canonical symmetric pairing \eqref{eq:pairing} is the starting point of generalized geometry, sometimes referred to as the generalized tangent bundle. The skew-symmetric pairing \eqref{eq:skew-sym-pairing} is equally canonical, although its role has remained largely unexplored (we will actually explore it in future work).

Using the language of generalized geometry, we can justify the naturality of the connection $\hat{\nabla}$ defined by \eqref{eq: lift of nabla}. The key concept is that of a generalized connection (see, e.g., \cite{gualtieri-branes}, although this concept goes back to the unpublished notes \cite{AleDBCA}). For $E\coloneqq TM\oplus T^*M$, a generalized connection is a linear operator \[D\colon \Gamma(E)\to \Gamma(E^*\otimes E)\] such that, with the notation $a,b\in \Gamma(E)$ and $f\in \cCi(M)$, 
\begin{align*}
 D(fa)&=fDa+df\otimes a,& d\la a,b\ra_+=\la Da,b\ra_+ + \la a, Db\ra_+.
\end{align*}
For any connection $\nabla$ on $M$, the connection $\nabla^\text{gen}$ defined by \eqref{eq: nabla gen} in the proof of Lemma \ref{lem: hat nabla is not LC} is a natural generalized connection (and hence the superscript `gen') associated to the connection $\nabla$. 

With this language, $\hat{\nabla}$ is then fully determined by the natural formula
 \begin{equation}\label{eq: nabla ca 2}
 \hat{\nabla}_{\phi\nb a}\phi\nb b=\phi\nb(\nabla^\text{gen}_a\,b).
 \end{equation}
 
 \begin{remark}
 Note that this construction can be easily generalized by replacing $\nabla^\text{gen}$ with an arbitrary generalized connection, which would lead, through Theorem \ref{thm: PW}, to a generalization of the Patterson-Walker metric. 
 \end{remark}

\subsection{Killing vector fields for the Patterson-Walker metric}
Killing vector fields for the Patterson-Walker metric can be characterized in terms of lifts of tensor fields on the base manifold $M$. This result was first established in \cite[Prop. 6.8]{conformal-PW}. We restate it in a coordinate-free form and provide an alternative, more direct proof using the formalism we have developed and an intrinsic approach to lifts (see Appendix \ref{app: lifts}).

\begin{theorem}\label{thm: Kill-PW}
 Let $\nabla$ be a torsion-free connection on $M$. The vector space of Killing vector fields for $g\nb$ admits the natural decomposition:
 \begin{equation*}
 \{\alpha^\emph{v}\,|\,\alpha\in\kil^1(M,\nabla)\}\oplus \{X^c\,|\,X\in\aff(M,\nabla)\}\oplus\left\{\begin{array}{c|c}
 \!\! \pi^h & \begin{array}{c}
 \pi\in\mathfrak{X}^2(M) \text{ such that } \\
 \nabla\pi=0 \text{ and }R^\pi_\nabla=0
 \end{array}
 \end{array}\!\!\!\!\!\right\},
 \end{equation*}
 where $R^\pi_\nabla\in\Upsilon^2(M,\Sym^2TM)$ is given by
 \begin{equation*}
 R^\pi_\nabla(Y,Z)\coloneqq\sym(R\nb(Y,\pi(\,))Z+R\nb(Z,\pi(\,))Y).
 \end{equation*}
\end{theorem}

\begin{proof}
 A vector field $Q\in\mathfrak{X}(T^*M)$ is Killing for $g\nb$ if and only if 
 \begin{equation}\label{eq: Q-Kill}
 0=\bar{\nabla}^sg\nb(Q)=:C\in\Upsilon^2(T^*M). 
 \end{equation}
 In natural coordinates, the vector field $Q$ is fully determined by $2n$-tuple of functions $\lbrace Q^i\rbrace\cup\lbrace Q_i\rbrace\subseteq \cCi(T^*U)$ as 
 \begin{equation*}
 \rest{Q}{T^*U}=Q^i\partial_{x^i}^\text{h}+Q_i\partial_{p_i},
 \end{equation*}
 and the condition \eqref{eq: Q-Kill} becomes the following system of $\frac{3}{2}n(n+1)$ PDEs:
\begin{align}
 C(\partial_{p_i},\partial_{p_j})&=\frac{\partial Q^i}{\partial p_j}+\frac{\partial Q^j}{\partial p_i}=0,\label{eq: PW-1}\\
 C(\partial^\text{h}_{x^i},\partial_{p_j})&=\partial^\text{h}_{x^i}Q^j+\frac{\partial Q_i}{\partial p_j}+Q^k\pr^*\Gamma^j_{ki}=0,\label{eq: PW-2}\\
 C(\partial^\text{h}_{x^i},\partial^\text{h}_{x^j})&=\partial^\text{h}_{x^j}Q_i+\partial^\text{h}_{x^i}Q_j-2Q_k\pr^*\Gamma^k_{ji}+p_lQ^k\pr^*((R_\nabla )^l_{ijk}+(R_\nabla )^l_{jik})=0,\label{eq: PW-3}
\end{align}
where $(R_\nabla )^l_{ijk}\coloneqq \dif x^l(R_\nabla (\partial_{x^j},\partial_{x^k})\partial_{x^i})$. Let us now find the general solution to this system. Differentiating \eqref{eq: PW-1} with respect to the variable $p_k$ yields
\begin{equation*}
 0=\frac{\partial^2 Q^i}{\partial p_k\partial p_j}+\frac{\partial^2 Q^j}{\partial p_k\partial p_i}=\frac{\partial^2 Q^i}{\partial p_j\partial p_k}+\frac{\partial^2 Q^j}{\partial p_i\partial p_k}.
\end{equation*}
Using \eqref{eq: PW-1} on the above equation and swapping the partial derivatives, we get
\begin{equation*}
 0=-2\frac{\partial^2 Q^k}{\partial p_i\partial p_j},
\end{equation*}
which has the general solution $Q^i=p_k\,\pr^*\pi^{ki}+\pr^*X^i$ for some functions $\lbrace \pi^{ki}\rbrace\cup\lbrace X^i\rbrace\subseteq \cCi(U)$. It is easy to see that the general solution $\lbrace Q^i\rbrace$ of \eqref{eq: PW-1} is of the aforementioned form with the extra condition $\pi^{ki}=-\pi^{ik}$. It follows from the transformation law for the components of $Q\in\mathfrak{X}(T^*M)$,
\begin{equation*}
 \tilde{Q}^i=\frac{\partial \tilde{x}^i}{\partial x^k}Q^k,
\end{equation*}
that the solutions to \eqref{eq: PW-1} described by $\lbrace \pi^{kj}\rbrace\cup\lbrace X^k\rbrace$ patch together to a global solution if and only if $\lbrace\pi^{kj}\rbrace$ are components of a global bivector field $\pi\in\mathfrak{X}^2(M)$ and $\lbrace X^k\rbrace$ are components of a global vector field $X\in\mathfrak{X}(M)$. Comparing with the local coordinate expressions for the horizontal lifts of a bivector field and a vector field (Remarks \ref{rem:coord-hor-bivector} and \ref{rem:coord-hor-vector}), we conclude that
\begin{equation*}
 \pr_{\mathcal{H}\nb}Q=\pi^h+X^\text{h}.
 \end{equation*}

By plugging in the general solution of \eqref{eq: PW-1} into \eqref{eq: PW-2}, we obtain
\begin{align*}
 \frac{\partial Q_i}{\partial p_j}&=-\pr^*\Big(\frac{\partial X^{j}}{\partial x^i}+X^k\Gamma^j_{ki}\Big)-p_k\,\pr^*\Big(\frac{\partial\pi^{jk}}{\partial x^i}+\Gamma^k_{li}\pi^{jl}+\Gamma^j_{li}\pi^{lk}\Big)\\
 &=-\pr^*(\nabla X)^j_i-p_k\,\pr^*(\nabla\pi)^{jk}_i.
\end{align*}
Taking the partial derivative with respect to the variable $p_k$ and subsequently $p_l$, we arrive to
\begin{equation*}
 \frac{\partial^3 Q_i}{\partial_{p_l}\partial_{p_k}\partial_{p_j}}=0,
\end{equation*}
the general solution of which is parametrized by $\lbrace A^{kl}_i\rbrace\cup\lbrace B^l_i\rbrace\cup\lbrace\alpha_i\rbrace\subseteq\cCi(U)$ as $Q_i=p_kp_l\,\pr^*A^{kl}_i+p_l\,\pr^*B^{l}_i+\pr^*\alpha_j$. Plugging this back to \eqref{eq: PW-2} yields
\begin{align*}
 B^j_i&=-(\nabla X)^j_i, & p_k\,\pr^*(A^{kj}_i+A^{jk}_i)&=-p_k\,\pr^*(\nabla\pi)^{jk}_i.
\end{align*}
\sloppy As the left-hand side of the second equation is symmetric on $(j,k)$ but the \mbox{right-hand} side is skew-symmetric on $(j,k)$, both sides must vanish. The general solution to \eqref{eq: PW-1} and \eqref{eq: PW-2} is thus given by $\pi\in\mathfrak{X}^2(M)$, $X\in\mathfrak{X}(M)$ and $\lbrace\alpha_i\rbrace$~as
\begin{align*}
 \rest{Q}{T^*U}&=\rest{(\pi^h+X^\text{h})}{T^*U}+(-p_j\,\pr^*(\nabla X)^{j}_i+\pr^*\alpha_i)\partial_{p_i}
\end{align*}
together with extra condition $\nabla\pi=0$. Using the transformation law
\begin{equation*}
 \tilde{Q}_j=\frac{\partial x^k}{\partial \tilde{x}^j}Q_k,
\end{equation*}
we find that the local solutions patch together to a global one if and only if the functions $\lbrace\alpha_i\rbrace$ are the components of a $1$-form $\alpha\in\Omega^1(M)$. Using the local coordinate expressions for the vertical lifts of a field of endomorphisms and a $1$-form (Remark \ref{rem:coord-hor-vector}), we have the global formula
\begin{equation*}
 Q=\pi^h+X^\text{h}-(\nabla X)^\upsilon+\alpha^\text{v}.
\end{equation*}
By Lemma \ref{lem: hor and ver c-lift}, we have that $X^c=X^\text{h}-(\nabla X)^\upsilon$, hence $Q=\pi^h+X^c+\alpha^\text{v}$.

Let us now solve the last equation \eqref{eq: PW-3}. Plugging in the general solution of \eqref{eq: PW-1} and \eqref{eq: PW-2} yields
\begin{align*}
 0=\,&\pr^*\Big(\frac{\partial\alpha_i}{\partial x^j}+\frac{\partial \alpha_j}{\partial x^i}-2\alpha_k\Gamma^k_{ji}\Big)\\
 &+p_l\,\pr^*\Big(-\frac{\partial (\nabla X)^l_i}{\partial x^j}-\frac{\partial (\nabla X)^l_j}{\partial x^i}-(\nabla X)^k_i\Gamma^l_{kj}-(\nabla X)^k_j\Gamma^l_{ki}+2(\nabla X)^l_k\Gamma^k_{ji}\Big)\\
 &+p_l\,\pr^*(X^k((R_\nabla )^l_{ijk}+(R_\nabla )^l_{jik}))+p_lp_m\,\pr^*\pi^{mk}((R_\nabla )^l_{ijk}+(R_\nabla )^l_{jik}).
\end{align*}
Rewriting the first row in terms of the symmetric derivative and the second row in terms of the symmetric curvature operator (Definition \ref{def: sym-curv}), we get
\begin{equation*}
 0=\pr^*(\nabla^s\alpha)_{ij}-p_l\,\pr^*(R^s_\nabla X+2\sym \iota_X R_\nabla )^l_{ij}+p_lp_m\,\pr^*\pi^{mk}((R_\nabla )^l_{ijk}+(R_\nabla )^l_{jik}).
\end{equation*}
As the three terms contain different powers of the fibre coordinates, \eqref{eq: PW-3} is indeed equivalent to three independent conditions that can be written in a global form as
\begin{align*}
 \nabla^s\alpha&=0, & R^s_\nabla X+2\sym \iota_X R_\nabla &=0, & \sym(R_\nabla (Y,\pi(\,))Z+R_\nabla (Z,\pi(\,))Y)&=0.
\end{align*}
The first condition gives that $\alpha\in\kil^1\nb(M)$ and, by Proposition \ref{prop: affine vector field tf}, the second condition is equivalent to $X\in\aff(M,\nabla)$, which concludes the proof.
\end{proof}

\begin{remark}
Every $\pi\in\mathfrak{X}^2(M)$ such that $\pi^h$ is a Killing vector field for $g\nb$ is, in particular, a regular Poisson structure on $M$ as it is parallel for a torsion-free connection \cite[Sec. 11.4]{CraLPG}). In the following, we will use the notation:
 \begin{equation*}
 \mathfrak{X}^2(M,\nabla)\coloneqq \{\pi\in\mathfrak{X}^2(M)\,|\, \nabla\pi=0 \text{ and }R^\pi_\nabla=0\}.
 \end{equation*}
\end{remark}

\begin{example}
 For a symplectic structure $\omega$ on $M$, we can always find a torsion-free connection $\nabla$ on $M$ such that $\omega$ is parallel or, equivalently, $\omega^{-1}\in\mathfrak{X}^2(M)$ satisfies $\nabla\omega^{-1}=0$. By the non-degeneracy and the symmetries of the symplectic curvature tensor, we have that $\omega^{-1}\in\mathfrak{X}^2(M,\nabla)$ if and only if
 \begin{equation}\label{eq: symp-R}
 \omega(R_\nabla(X,Y)Z,W)=\omega(R_\nabla(W,Z)Y,X).
 \end{equation}
 By taking the contraction in $(Z,W)$ via $\omega$, \eqref{eq: symp-R} implies that $\nabla$ is Ricci flat, which gives an obstruction on a symplectic form to lift to a Killing vector field for $g\nb$.
\end{example}

The space of Killing vector fields for any (pseudo-)Riemannian metric forms a Lie subalgebra of $\mathfrak{X}(M)$. We show that, in the case of the Patterson-Walker metric, this Lie algebra admits the natural structure of a so-called \textit{$3$-graded Lie algebra}.

\begin{theorem}\label{thm: PW-Lie}
 Let $\nabla$ be a torsion-free connection on $M$. The decomposition in Theorem \ref{thm: Kill-PW} represents a natural grading of the Lie algebra of Killing vector fields for $g\nb$ concentrated in degrees $-1,0,1$. In particular, for $\alpha,\beta\in\kil^1(M,\nabla)$, $X,Y\in\aff(M,\nabla)$ and $\pi,\rho\in\mathfrak{X}^2(M,\nabla)$, we have
 \begin{align*}
 [\alpha^\emph{v},\beta^\emph{v}]&=0, & [X^c,\alpha^\emph{v}]&=(L_X\alpha)^\emph{v}, & [\alpha^\emph{v},\pi^h]&=\pi(\alpha)^c,\\
 [X^c,Y^c]&=[X,Y]^c, & [X^c,\pi^h]&=(L_X\pi)^h, & [\pi^h,\rho^h]&=0.
 \end{align*}
\end{theorem}

\begin{proof}
 By Lemma \ref{lem: Lie brackets hor and ver} and Proposition \ref{prop: c-lift Lie}, we already have, respectively, that $[\alpha^\text{v},\beta^\text{v}]=0$ and $ [X^c,Y^c]=[X,Y]^c$. For an arbitrary $Z\in\mathfrak{X}(M)$, we find that
 \begin{align*}
 [X^c,\alpha^\text{v}]Z^v&=X^c\pr^*\alpha(Z)-\alpha^\text{v}[X,Z]^v=\pr^*(X\alpha(Z)-\alpha([X,Z])=\pr^*(L_X\alpha)(Z)\\
 &=(L_X\alpha)^\text{v}Z^v,
 \end{align*}
 hence, by Proposition \ref{prop: vliftvfield}, we conclude that $[X^c,\alpha^\text{v}]=(L_X\alpha)^\text{v}$.
 
 Analogously, by Lemmas \ref{lem: hor and ver coord} and \ref{lem: pi pol} and Corollary \ref{cor: hor and ver pol}, we get
 \begin{align*}
 [\alpha^\text{v},\pi^h]Z^v&=\alpha^\text{v}(\nabla Z\circ \pi-\pi\circ(\nabla Z)^t)^v-\pi^h\pr^*\alpha(Z)\\
 &=(\nabla_{\pi(\alpha)}Z-\pi(\alpha(\nabla Z))+\pi(\dif\alpha(Z)))^v=(\nabla_{\pi(\alpha)}Z+\pi((\nabla\alpha)(Z)))^v.
 \end{align*}
 Using $\iota_Z\nabla^s\alpha=\nabla_Z\alpha+(\nabla \alpha)(Z)$ and $(\nabla_Z\pi)(\alpha)=\nabla_Z\pi(\alpha)-\pi(\nabla_Z\alpha)$, we obtain
 \begin{equation*}
 [\alpha^\text{v},\pi^h]Z^v=(\pi(\iota_Z\nabla^s\alpha)+(\nabla_Z\pi)(\alpha)-\nabla_Z\pi(\alpha)+\nabla_{\pi(\alpha)}Z)^v,
 \end{equation*}
 that is, by torsion-freeness and Proposition \ref{prop: vliftvfield}, we have
 \begin{equation*}
 [\alpha^\text{v},\pi^h]=(\pi(\nabla^s\alpha)+(\nabla\pi)(\alpha))^\upsilon+\pi(\alpha)^c
 \end{equation*}
 and the result follows because $\nabla^s\alpha=0$ and $\nabla\pi=0$.

By Lemmas \ref{lem: c-lift pol} and \ref{lem: pi pol}, we find that
\begin{align*}
 [X^c,\pi^h]Z^v&=X^c(\nabla Z\circ\pi-\pi\circ(\nabla Z)^t)^v-\pi^h[X,Z]^v\\
 &=(L_X(\nabla Z\circ\pi-\pi\circ(\nabla Z)^t)-\nabla[X,Z]\circ\pi+\pi\circ(\nabla[X,Z])^t)^v.
\end{align*}
The fact that $L_X\vartheta=L_X\circ\vartheta-\vartheta\circ L_x$ for $\vartheta\in\Gamma(\Sym^2TM)$ yields that $[X^c,\pi^h]Z^v$ is equal to $\kappa^v$ for $\kappa\in\Gamma(\Sym^2TM)$ given by its action on $\eta\in\Omega^1(M)$ as
\begin{align*}
 \kappa(\eta)=\,&[X,\nabla_{\pi(\eta)}Z]-\nabla_{\pi(L_X\eta)}Z-[X,\pi(\eta(\nabla Z))]\\
 &+\pi((L_X\eta)(\nabla Z))-\nabla_{\pi(\eta)}[X,Z]+\pi(\eta(\nabla[X,Z])).
\end{align*}
By Proposition \ref{prop: affine vf}
 and the fact that $X\in\aff(M,\nabla)$, we have, on the one hand,
\begin{equation*}
 [X,\nabla_{\pi(\eta)}Z]-\nabla_{\pi(L_X\eta)}Z-\nabla_{\pi(\eta)}[X,Z]=\nabla_{[X,\pi(\eta)]-\pi(L_X\eta)}Z=\nabla_{(L_X\pi)(\eta)}Z,
\end{equation*}
while, on the other hand, for $W\in\mathfrak{X}(M)$, we obtain
\begin{align*}
 (L_X\eta)(\nabla_W Z)+\eta(\nabla_W[X,Z])&=X\eta(\nabla_WZ)-\eta([X,\nabla_WZ]-\nabla_W[X,Z])\\
 &=X\eta(\nabla_WZ)-\eta(\nabla_{[X,W]}Z)=(L_X(\eta(\nabla Z)))(W).
\end{align*}
Therefore, $[X^c,\pi^h]Z^v=(\nabla Z\circ L_X\pi-L_X\pi\circ (\nabla Z)^t)^v$ because we have
\begin{equation*}
 \kappa(\eta)=\nabla_{(L_X\pi)(\eta)}Z-[X,\pi(\eta(\nabla Z))]+\pi(L_X(\eta(\nabla Z)))=\nabla_{(L_X\pi)(\eta)}Z-(L_X\pi)(\eta(\nabla Z))
\end{equation*}
and the result follows by Proposition \ref{prop: vliftvfield} and Lemma \ref{lem: pi pol}. 

Finally, by Lemma \ref{lem: pi pol}, we get that $[\pi^h, \rho^h]Z^v=\mathcal{X}^v$ for some $\mathcal{X}\in\Gamma(\Sym^3TM)$. By Theorem \ref{thm: Kill-PW}, the definition of the complete lift of a vector field, and Lemmas \ref{lem: hor and ver} and \ref{lem: pi pol}, we have that a Killing vector field $Q\in\mathfrak{X}(T^*M)$ satisfies
\begin{equation*}
 QZ^v=\mathcal{X}^v
\end{equation*}
for some $\mathcal{X}\in\Gamma(\Sym^3TM)$ if and only $Q=0$ (hence $\mathcal{X}=0$). Therefore, by the fact that Killing vector fields are closed with respect to the Lie bracket of vector fields, we conclude that $[\pi^h,\rho^h]=0$.
\end{proof}

\subsection{Simplified commutation relations of symmetric Cartan calculus
}\label{sec: simp-com}
In Section \ref{sec:commutation-relations}, we derived formulas \eqref{eq: com1} and \eqref{eq: com2} for the commutators $[L^s_X, L^s_Y]$ and $[\nabla^s, L^s_X]$. In Theorem \ref{thm: commutation-relations-sym-Cartan-PW}, we then identified the necessary and sufficient condition under which these commutators simplify. In this section, we provide a conceptual explanation for why such a condition arises in symmetric Cartan calculus, while it is not apparently needed in classical Cartan calculus.

\begin{definition}
 Let $\nabla$ be a connection on $M$. A vector field $Q\in\mathfrak{X}(T^*M)$ is called a \textbf{gradient Killing vector field} for $g\nb$ if there is $F\in\cCi(T^*M)$ such that $Q=g\nb^{-1}(\dif F)$ and, moreover, $Q$ is a Killing vector field for $g\nb$.
\end{definition}
 
\begin{lemma}\label{lem: comp Kill}
 Let $\nabla$ be a torsion-free connection on $M$. The complete lift of a vector field $X\in\mathfrak{X}(M)$ is a gradient Killing vector field for $g\nb$ if and only if $X\in\aff_0(M,\nabla)$.
\end{lemma}

\begin{proof}
By Theorem \ref{thm: commutation-relations-sym-Cartan}, $X^c$ is a Killing vector field for $g\nb$ if and only if $X\in\aff(M,\nabla)$. Using the fact that $X^c=X^\text{h}-(\nabla X)^\upsilon$ (see Lemma \ref{lem: hor and ver c-lift}) yields that $X^c$ is a gradient if and only if there is a function $ F\in\cCi(T^*M)$ such that
 \begin{align*}
 g\nb(X^\text{h})&=\rest{\dif F}{\Gamma(\mathcal{V})}, & g\nb(-(\nabla X)^\upsilon)&=\rest{\dif F}{\Gamma(\mathcal{H}\nb)}.
 \end{align*}
 Equivalently, by Lemmas \ref{lem: PW and End} and \ref{lem: hor and ver}, for any $\alpha\in\Omega^1(M)$ and $Y\in\mathfrak{X}(M)$,
 \begin{align*}
 \alpha^\text{v}(X^v- F)&=0, & Y^\text{h}(X^v+ F)&=0.
 \end{align*}
 It follows from the first equation that $ F=X^v+\pr^*f$ for some $f\in\cCi(M)$. Applying this to the second equation, we obtain
 \begin{equation*}
 2(\nabla_YX)^v+\pr^*(Yf)=0.
 \end{equation*}
 As the first term depends on the fibre coordinates and the second one does not, necessarily both of them vanish, that is, $X$ is parallel and $f$ is constant, so the result follows.
\end{proof}

\begin{theorem}\label{thm: commutation-relations-sym-Cartan-PW}
 Let $\nabla$ be a torsion-free connection on $M$ and $X,Y\in\mathfrak{X}(M)$. All of the six relations
 \begin{align*}
 [\iota_X,\iota_Y]&=0,& [\nabla^s,\nabla^s]&=0,&
 [\iota_X,\nabla^s]&=L^s_X,\\
 [L^s_X,\iota_Y]&=\iota_{[ X,Y]_s}, & [L^s_X,L^s_Y]&=L^s_{[X,Y]_s}, & [\nabla^s, L^s_X]&=0
\end{align*}
are satisfied if and only if $X^\text{c}$ is a gradient Killing vector field for $g\nb$.
\end{theorem}

\begin{proof}
 It is a direct consequence of Theorem \ref{thm: commutation-relations-sym-Cartan} and Lemma \ref{lem: comp Kill}.
\end{proof}






Having in mind the analogy between $g\nb$ and $\omega_\text{can}$, the condition of the `complete lift being gradient Killing' translates into the `complete lift being Hamiltonian'. However, by \eqref{eq:complete-is-hamiltonian} in Appendix \ref{app: lifts}, the latter condition is trivially satisfied. Consequently, we obtain a perfectly parallel analogy between symmetric Cartan calculus and classical Cartan calculus.

\begin{remark}
 Note that the analogy between gradient Killing and Hamiltonian is complete. Indeed, just as for a (pseudo-)Riemannian metric $g$, any gradient Killing vector field is automatically Killing ($L_Xg=0$), we have that for a symplectic form $\omega$, any Hamiltonian vector field is symplectic ($L_X\omega=0$).
\end{remark}

\appendix

\section{Non-geometric derivations}\label{app: non-geometric}
 We have taken the symmetric derivative $\nabla^s$ to be the starting point of symmetric Cartan calculus. Although we have explained why it is natural that $\nabla^s$ does not square to zero. A natural question is whether there are derivations of $\Upsilon^\bullet(M)$ of degree $1$, not necessarily geometric, that square to zero.

 The derivations of $\Upsilon^\bullet (M)$ were classified in \cite{HeydSD}. In particular, derivations of degree $1$ are parametrized by pairs $(A, \sigma)$ consisting of $A\in\Gamma(\en TM)$ and $\sigma\in\Upsilon^2(M,TM)$. Explicitly, any $D\in\der_1(\Upsilon^\bullet(M))$ is of the form
 \begin{equation*}
 D=[\iota^s_A,\nabla^s]+\iota^s_\sigma=\nabla^s_A+\iota^s_\sigma
 \end{equation*}
 for some auxiliary symmetric derivative $\nabla^s$. The geometric derivations are precisely those with $A=\id_{TM}$, see Remark \ref{rem: nablas_id} and Proposition \ref{prop: symCartan variation}.

 \begin{proposition}
 The only degree-$1$ derivation of $\Upsilon^\bullet(M)$ that squares to zero is the trivial one, $D=0$.
 \end{proposition}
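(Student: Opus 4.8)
The plan is to bypass the classification from \cite{HeydSD} and argue directly, generalizing the computation in Remark \ref{rem: D^2} from $f^2$ to the square of an arbitrary symmetric form. The crucial structural fact I would exploit is that, although $\Upsilon^\bullet(M)$ has zero divisors globally (take functions with disjoint supports), at each point $m\in M$ the fibre algebra $\Sym^\bullet T_m^*M$ is isomorphic to the polynomial ring $\R[x_1,\dots,x_n]$, with $n:=\dim M>0$, and is therefore an integral domain. Consequently any homogeneous element $\gamma$ with $\gamma\odot\gamma=0$ must vanish, and this is the only nontrivial ingredient the proof needs.

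First I would record the second-order Leibniz expansion. Since $D\in\der_1(\Upsilon^\bullet(M))$ is an (ungraded) derivation of the commutative algebra $(\Upsilon^\bullet(M),\odot)$, applying the product rule twice gives, for every $\varphi\in\Upsilon^\bullet(M)$,
\begin{equation*}
D^2(\varphi\odot\varphi)=2\,(D\varphi)\odot(D\varphi)+2\,\varphi\odot D^2\varphi.
\end{equation*}
Under the hypothesis $D\circ D=0$, both $D^2(\varphi\odot\varphi)$ and $D^2\varphi$ vanish, so the identity collapses to
\begin{equation*}
(D\varphi)\odot(D\varphi)=0.
\end{equation*}

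Next I would pass to a point: evaluating at an arbitrary $m\in M$ yields $(D\varphi)_m\odot(D\varphi)_m=0$ in $\Sym^\bullet T_m^*M$, and the integral domain property forces $(D\varphi)_m=0$. As $m$ is arbitrary, $D\varphi=0$, and as $\varphi$ is arbitrary, $D=0$. The single point to stress is precisely this fibrewise argument: the implication $\gamma\odot\gamma=0\Rightarrow\gamma=0$ fails for the global algebra $\Upsilon^\bullet(M)$, so one must work in $\Sym^\bullet T_m^*M$, where the symmetric algebra is a genuine polynomial ring. As a consistency check, specializing to $\varphi=f\in\cCi(M)$ recovers Remark \ref{rem: D^2}: the identity gives $(Df)\odot(Df)=0$, hence $Df=0$, which is impossible for a geometric derivation (where $Df=\dif f\neq 0$), confirming in particular that no geometric derivation squares to zero. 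If one instead preferred to stay within the classification $D=\nabla^s_A+\iota^s_\sigma$, the same computation applied to $\varphi=f^2$ first yields $A=0$, after which $D=\iota^s_\sigma$ is $\cCi(M)$-linear and the identity $(D\alpha)\odot(D\alpha)=0$ on $1$-forms gives $\sigma=0$; but the direct route above is shorter and requires no structural input beyond the integral domain observation.
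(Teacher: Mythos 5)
Your argument is correct, and it takes a genuinely different and more economical route than the paper's. The paper proves this by invoking the classification of degree-$1$ derivations from \cite{HeydSD}, writing $D=\nabla^s_A+\iota^s_\sigma$, killing $A$ via $D(Df^2)=2Df\odot Df$, and then killing $\sigma$ in two further steps — first showing $\sigma(\sigma(X,X),X)=0$ by testing on $1$-forms, then testing on an auxiliary Riemannian metric $C$ to force $\sigma(X,X)=0$. Your proof replaces all of this with the single second-order Leibniz identity $D^2(\varphi\odot\varphi)=2\,(D\varphi)\odot(D\varphi)+2\,\varphi\odot D^2\varphi$, valid for any ungraded derivation of the commutative algebra $(\Upsilon^\bullet(M),\odot)$, followed by the observation that evaluation at a point is an algebra homomorphism onto $\Sym^\bullet T_m^*M\cong\R[x_1,\dots,x_n]$, an integral domain; this yields $D\varphi=0$ for every $\varphi$ at once, with no classification result and no choice of metric. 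What the paper's route buys is an explicit description of \emph{which} pair $(A,\sigma)$ is being excluded at each stage, which fits its surrounding discussion of non-geometric derivations; what yours buys is brevity and independence from \cite{HeydSD}. One small correction to your commentary (not to the proof): the implication $\gamma\odot\gamma=0\Rightarrow\gamma=0$ does \emph{not} fail in the global algebra $\Upsilon^\bullet(M)$ — functions with disjoint supports are zero divisors but not nilpotents, so the global algebra is reduced even though it is not a domain. The fibrewise argument is not a workaround for a failure of this implication; it is precisely the proof that the global algebra is reduced. Also, the integral-domain property gives the vanishing of arbitrary elements, not just homogeneous ones, so the restriction to homogeneous $\gamma$ in your statement of the key ingredient is unnecessary.
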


 \begin{proof}
 Consider a derivation $D\in\der_1(\Upsilon^\bullet(M))$ described by a pair $(A,\sigma)$ as above. Assume that $D\circ D=0$. We then have
 \begin{equation*}
 0=D(Df^2)=D(2fDf)=2Df\odot Df,
 \end{equation*}
 hence $\rest{D}{\cCi(M)}=0$. In terms of the corresponding $(A,\sigma)$, it means $(AX)f=0$ for all $X\in\mathfrak{X}(M)$ and $f\in \cCi(M)$. Therefore, $A=0$ and $D=\iota^s_\sigma$. In addition,
 \begin{equation*}
 0=(D(D\alpha))(X,X,X)=3(\iota^s_\sigma\alpha)(\sigma(X,X),X)=3\alpha(\sigma(\sigma(X,X),X))
 \end{equation*}
 for all $\alpha\in\Upsilon^1(M)$, so $\sigma(\sigma(X,X),X)=0$. Finally, for every $C\in\Upsilon^2(M)$,
 \begin{align*}
 0&=(D(DC))(X,X,X,X)=(3!) (\iota^s_\sigma C)(\sigma(X,X),X,X)\\
 &=12C(\sigma(\sigma(X,X),X),X)+6C(\sigma(X,X),\sigma(X,X)).
 \end{align*}
 Since $\rest{D\circ D}{\Upsilon^1(M)}=0$, we get $C(\sigma(X,X),\sigma(X,X))=0$. If we choose $C$ to be a Riemannian metric, this yields $\sigma(X,X)=0$. By polarization, we get $\sigma=0$ and thus $D=0$. 
 \end{proof}

This proposition reinforces the claim that the symmetric derivative is indeed the natural analogue of the exterior derivative.


\section{Various lifts to the cotangent bundle}\label{app: lifts}

In this section, we recall several lifts from the base manifold to the total space of the cotangent bundle $\pr\colon T^*M\rightarrow M$ that are used in Section \ref{sec:Patterson-Walker}.

\subsection{Vertical lifts of $1$-forms and horizontal lifts of vector fields}

The \textbf{vertical subbundle} $\mathcal{V}=\ker \pr_*\subseteq T(T^*M)$ is a subbundle of rank $\dim M$ canonically isomorphic to $\pr^*T^*M$. The \textbf{vertical lift} of a $1$-form is defined as
\begin{align*}
 \Omega^1(M)&\to \Gamma(\mathcal{V})\subseteq \mathfrak{X}(T^*M)\\
 \alpha&\mapsto \alpha^\text{v}\coloneqq \pr^*\alpha,
 \end{align*}
where $\pr^*\alpha\in\Gamma(\pr^*T^*M)$ denotes the pullback section.

A connection $\nabla$ on $M$ induces, through the dual connection, the \textbf{horizontal subbundle} $\mathcal{H}\nb\leq T(T^*M)$, of rank $\dim M$ and satisfying $T(T^*M)=\mathcal{H}\nb\oplus \mathcal{V}$. It is canonically isomorphic to $\pr^*TM$, which gives the \textbf{horizontal lift}
\begin{align*}
 \mathfrak{X}(M)&\to \Gamma(\mathcal{H}\nb)\subseteq \mathfrak{X}(T^*M)\\
 X&\mapsto X^\text{h}\coloneqq \pr^* X.
\end{align*}

\begin{remark}\label{rem:coord-hor-vector}
 In natural local coordinates $(T^*U,\lbrace x^i\rbrace\cup\lbrace p_j\rbrace)$ on $T^*M$, we have
 \begin{align*}
 \rest{\alpha}{U}&=\alpha_j\dif x^j, & \rest{\alpha^\text{v}}{T^*U}&=(\pr^*\alpha_j)\partial_{p_j},\\
\rest{X}{U}&=X^i\partial_{x^i}, & \rest{X^\text{h}}{T^*U}&=(\pr^*X^i)(\partial_{x^i}+p_k(\pr^*\Gamma^k_{ji})\partial_{p_j}),
\end{align*}
where $\lbrace\Gamma^k_{ij}\rbrace$ is the set of Christoffel symbols of $\nabla$ corresponding to $(U,\lbrace x^j\rbrace)$.
\end{remark}

Using the coordinate expressions, we can prove a useful technical result.

\begin{lemma}\label{lem: hor and ver coord}
Let $\nabla$ be a connection on $M$. For $X\in\mathfrak{X}(M)$, $\alpha\in\Omega^1(M)$ and $f\in \cCi(M)$,
 \begin{align*}
 \alpha_\emph{can}(X^\emph{h})&=X^v, & X^\emph{h}(\pr^*f)&=\pr^*(Xf), & \alpha^\emph{v}(\pr^*f)&=0.
 \end{align*}
\end{lemma}

\begin{proof}
 It follows by a straightforward calculation in natural coordinates on $T^*M$.
\end{proof}

\subsection{Vertical lifts of symmetric multivector fields and fields of endomorphisms}
The \textbf{vertical lift of a vector field} is a map $(\,\,)^v\colon \mathfrak{X}(M)\rightarrow \cCi(T^*M)$ defined by the formula
\begin{equation*}
 X^v(\zeta)\coloneqq \zeta(X_{\pr(\zeta)})
\end{equation*}
for all $\zeta\in T^*M$. 
%
%
The importance of the vertical lift of a vector field can be seen through the following proposition.

\begin{proposition}[\cite{Patlift}]\label{prop: vliftvfield}
 A vector field on $T^*M$ is fully determined by its action on vertical lifts of all vector fields on $M$.
\end{proposition}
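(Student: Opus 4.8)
The plan is to reduce the statement to an injectivity claim and then to a pointwise spanning argument. Since the assignment $\mathcal{X}\mapsto \big(\mathcal{X}(X^v)\big)_{X\in\mathfrak{X}(M)}$ is linear in the vector field $\mathcal{X}\in\mathfrak{X}(T^*M)$, it suffices to show that $\mathcal{X}(X^v)=0$ for all $X\in\mathfrak{X}(M)$ forces $\mathcal{X}=0$. As $\mathcal{X}$ acts as a derivation of $\cCi(T^*M)$, we have $\mathcal{X}(g)(\zeta)=\dif g_\zeta(\mathcal{X}_\zeta)$, so each value depends only on the differential of $g$ at $\zeta$. Hence the whole question reduces to showing that the covectors $\{\dif(X^v)_\zeta : X\in\mathfrak{X}(M)\}$ span $T^*_\zeta(T^*M)$ for a dense set of points $\zeta$, after which smoothness of $\mathcal{X}$ closes the gap.

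Next I would compute in natural coordinates $(T^*U,\{x^i\}\cup\{p_j\})$, where $X^v=(\pr^*X^i)\,p_i$ and therefore
\[
 \dif(X^v)_\zeta=p_i(\zeta)\,\big(\pr^*\tfrac{\partial X^i}{\partial x^k}\big)(\zeta)\,\dif x^k + X^i(\pr(\zeta))\,\dif p_i .
\]
Writing $\mathcal{X}=A^i\partial_{x^i}+B_j\partial_{p_j}$, the covectors $\dif p_i$ detect the coefficients $B_i$ and the covectors $\dif x^k$ detect the $A^k$. By locality of $\mathcal{X}$, the vector field $X$ may be chosen to agree near $\pr(\zeta)$ with any prescribed local model (extended to a global vector field by a bump function), since $\mathcal{X}(X^v)(\zeta)$ only depends on the germ of $X^v$ at $\zeta$. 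Taking $X=\partial_{x^i}$ gives $\dif(X^v)_\zeta=\dif p_i$, so all vertical covectors lie in the span. To produce the horizontal ones, I would fix $\zeta$ off the zero section, pick an index $i_0$ with $p_{i_0}(\zeta)\neq 0$, and take $X$ equal near $\pr(\zeta)$ to $x^k\partial_{x^{i_0}}$; then $\dif(X^v)_\zeta=p_{i_0}(\zeta)\,\dif x^k + x^k(\pr(\zeta))\,\dif p_{i_0}$, which together with the already-obtained $\dif p_{i_0}$ yields $\dif x^k$. Thus the differentials of vertical lifts span $T^*_\zeta(T^*M)$ at every $\zeta$ outside the zero section, forcing $\mathcal{X}_\zeta=0$ there.

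Finally, since $\dim M>0$ the complement of the zero section is dense in $T^*M$, and $\mathcal{X}$ is smooth; hence $\mathcal{X}$ vanishes on a dense set and therefore everywhere. The genuinely delicate point, and the main obstacle, is exactly the zero section: there $p=0$ annihilates all the $\dif x^k$ terms above, so every vertical lift $X^v$ has purely vertical differential, and the horizontal coefficients $A^i$ of $\mathcal{X}$ cannot be detected pointwise by the functions $X^v$. They must instead be recovered through the density-and-continuity argument rather than by direct evaluation, which is why smoothness of $\mathcal{X}$ (not merely its pointwise action) enters the proof.
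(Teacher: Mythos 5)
Your proof is correct. The paper itself gives no argument for this statement (it is quoted from \cite{Patlift}), but your reasoning is the standard one found there: the functions $X^v$ are exactly the fibre-linear functions, the coordinate choices $X=\partial_{x^i}$ and $X=x^k\partial_{x^{i_0}}$ make their differentials span the cotangent space at every point off the zero section, and you correctly identify the only delicate point --- that along the zero section the differentials $\dif(X^v)_\zeta$ are purely vertical --- and close it by density of the complement of the zero section together with continuity of the vector field. No gaps.
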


This concept is straightforwardly extended to the \textbf{vertical lift of a symmetric multivector field} $(\,\,)^v\colon\Gamma(\Sym^rTM)\rightarrow \cCi(T^*M)$ by
\begin{equation*}
 \mathcal{X}^v(\zeta)\coloneqq\frac{1}{r!}\mathcal{X}(\zeta\varlist\zeta).
\end{equation*}

Proposition \ref{prop: vliftvfield} allows us to give an elegant definition to the \textbf{vertical lift of a field of endomorphisms}, which is the map $(\,\,)^\upsilon\colon \Gamma(\en TM)\rightarrow\mathfrak{X}(T^*M)$ fully determined by the relation
\begin{equation*}
 A^\upsilon X^v\coloneqq (AX)^v.
\end{equation*}

\begin{remark}
 In natural local coordinates, for $\mathcal{X}\in\Gamma(\Sym^rTM)$ and $A\in\Gamma(\en TM)$,
\begin{align*}
 \rest{\mathcal{X}}{U}&=\frac{1}{r!}\mathcal{X}^{i_1\dots i_r}\partial_{x_{i_1}}\odot\dots\odot\partial_{x_{i_r}}, & \rest{\mathcal{X}^v}{T^*U}&=\frac{1}{r!}p_{i_1}\dots p_{i_r}(\pr^*\mathcal{X}^{i_1\dots i_r}),\\
 \rest{A}{U}&=A^i_j\dif{x^j}\otimes\partial_{x^i}, & \rest{A^\upsilon}{T^*U}&=p_i(\pr^*A^i_j)\partial_{p_j}.
\end{align*}
\end{remark}

Taking into account Proposition \ref{prop: vliftvfield}, we can write explicit formulas that fully determine the horizontal lift of a vector field and the vertical lift of a $1$-form.

\begin{lemma}\label{lem: hor and ver}
Let $\nabla$ be a connection on $M$. For $X,Y\in\mathfrak{X}(M)$ and $\alpha\in\Omega^1(M)$,
\begin{align*}
 X^\text{\emph{h}}Y^v&=(\nabla_XY)^v, & \alpha^\text{\emph{v}}Y^v&=\pr^*\alpha(Y).
\end{align*}
\end{lemma}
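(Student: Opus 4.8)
The plan is to establish both equalities by a direct computation in a natural chart $(T^*U,\{x^i\}\cup\{p_j\})$, relying on the coordinate expressions already recorded in Remark \ref{rem:coord-hor-vector} and in the coordinate formulas of Appendix \ref{app: lifts}, namely
\begin{equation*}
\rest{Y^v}{T^*U}=p_i\,(\pr^*Y^i),\qquad \rest{\alpha^\text{v}}{T^*U}=(\pr^*\alpha_j)\,\partial_{p_j},\qquad \rest{X^\text{h}}{T^*U}=(\pr^*X^i)\bigl(\partial_{x^i}+p_k(\pr^*\Gamma^k_{ji})\,\partial_{p_j}\bigr).
\end{equation*}
Since natural charts cover $T^*M$ and both sides of each identity are globally well-defined functions on $T^*M$, it suffices to verify the equalities on an arbitrary such chart. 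Throughout I would keep in mind that $Y^v\in\cCi(T^*M)$ is the vertical lift of a \emph{vector field} (a function), whereas $\alpha^\text{v},X^\text{h}\in\mathfrak{X}(T^*M)$ are \emph{vector fields} acting on it.

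The second identity is immediate. Applying the vertical vector field $\alpha^\text{v}$ to the function $Y^v=p_i\,\pr^*Y^i$ and using $\partial_{p_j}p_i=\delta^i_j$ together with the fact that $\pr^*Y^i$ is independent of the fibre coordinates gives
\begin{equation*}
\alpha^\text{v}Y^v=(\pr^*\alpha_j)\,\partial_{p_j}\bigl(p_i\,\pr^*Y^i\bigr)=(\pr^*\alpha_j)(\pr^*Y^j)=\pr^*\bigl(\alpha_jY^j\bigr)=\pr^*\alpha(Y),
\end{equation*}
which holds on every chart and hence globally.

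For the first identity I would apply $X^\text{h}$ to $Y^v=p_m\,\pr^*Y^m$ and split the result according to the two parts of $X^\text{h}$. The $\partial_{x^i}$-part contributes $(\pr^*X^i)\,p_m\,\pr^*(\partial Y^m/\partial x^i)$, since $p_m$ is annihilated by $\partial_{x^i}$ and $\partial_{x^i}(\pr^*Y^m)=\pr^*(\partial Y^m/\partial x^i)$; the $\partial_{p_j}$-part contributes $(\pr^*X^i)\,p_k(\pr^*\Gamma^k_{ji})(\pr^*Y^j)$, again by $\partial_{p_j}p_m=\delta^m_j$. Relabelling the contracted index in the second term and factoring out $p_m$ yields
\begin{equation*}
X^\text{h}Y^v=p_m\,\pr^*\!\Bigl(X^i\frac{\partial Y^m}{\partial x^i}+X^iY^j\,\Gamma^m_{ji}\Bigr),
\end{equation*}
and by the very definition of the Christoffel symbols the bracket is the $m$-th component of $\nabla_XY$ in these coordinates, so the right-hand side is exactly $(\nabla_XY)^v$.

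The computation is routine, so the only real point requiring care—and what I expect to be the main obstacle—is the index bookkeeping for the Christoffel symbols. The order of the lower indices appearing in the horizontal lift of Remark \ref{rem:coord-hor-vector} must be matched with the convention used to expand $\nabla_XY$ in components; the formula there is written precisely so that the contraction produces the correct term, and for a general (not necessarily torsion-free) connection this ordering is essential, since the two orderings differ exactly by the torsion. Once the conventions are aligned, the identification in the last display is forced. Alternatively, the first identity may be read as the defining property of the horizontal lift: combined with Proposition \ref{prop: vliftvfield}, the relation $X^\text{h}Y^v=(\nabla_XY)^v$ characterizes $X^\text{h}$ uniquely, which is the viewpoint used throughout Section \ref{sec:Patterson-Walker}.
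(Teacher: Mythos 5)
Your proposal is correct and follows essentially the same route as the paper: a direct computation in a natural chart using the coordinate expressions of the lifts from Remark \ref{rem:coord-hor-vector}, with the $\partial_{x^i}$- and $\partial_{p_j}$-contributions recombining into the components of $\nabla_XY$. The index-ordering caveat you raise for the Christoffel symbols is exactly the point the paper's convention in that remark is designed to handle, so nothing further is needed.
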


\begin{proof}
 In natural coordinates, using Lemma \ref{lem: hor and ver coord}, we have 
 \begin{align*}
 \rest{(X^\text{h}Y^v)}{T^*U}&=X^\text{h}(p_l(\pr^*Y^l))=p_l(\pr^*(X(Y^l))+(\pr^*X^i)(\pr^*Y^j)(\pr^*\Gamma^l_{ji}))\\
 &=\rest{(\nabla_XY)^v}{T^*U},
 \end{align*}
 where $\rest{X}{U}=X^i\partial_{x^i}$. On the other hand,
 \begin{equation*}
 \rest{(\alpha^\text{v}Y^v)}{T^*U}=(\pr^*\alpha_j)\partial_{p_j}(p_l(\pr^*Y^l))=(\pr^*\alpha_j)(\pr^*Y^j)=\rest{\pr^*\alpha(Y)}{T^*U},
 \end{equation*}
 where $\rest{\alpha}{U}=\alpha_j\dif x^j$. Thus, also globally, $X^\text{h}Y^v=(\nabla_XY)^v$ and $\alpha^\text{v}Y^v=\pr^*\alpha(Y)$.
 \end{proof}


 By the Leibniz rule, we extend the result of Lemma \ref{lem: hor and ver}.

 \begin{corollary}\label{cor: hor and ver pol}
 Let $\nabla$ be a connection on $M$. For $X\in\mathfrak{X}(M)$, $\alpha\in\Omega^1(M)$, and $\mathcal{X}\in\Gamma(\Sym^r TM)$, we have
 \begin{align*}
 X^\emph{h}\mathcal{X}^v&=(\nabla_X\mathcal{X})^v, & \alpha^\emph{v}\mathcal{X}^v&=(\iota_\alpha\mathcal{X})^v.
 \end{align*}
 \end{corollary}
 
Using Proposition \ref{prop: vliftvfield}, we easily compute the Lie brackets between the lifts.

\begin{lemma}\label{lem: Lie brackets hor and ver}
 Let $\nabla$ be a connection on $M$. For $X,Y\in\mathfrak{X}(M)$ and $\alpha,\beta\in\Omega^1(M)$,
 \begin{align*}[X^\text{\emph{h}},Y^\text{\emph{h}}]&=[X,Y]^\text{\emph{h}}+R_\nabla (X,Y)^\upsilon, & [X^\text{\emph{h}},\alpha^\text{\emph{v}}]&=(\nabla_X\alpha)^\text{\emph{v}}, & [\alpha^\text{\emph{v}},\beta^\text{\emph{v}}]&=0.
 \end{align*} 
\end{lemma}

\begin{proof}
It follows from Lemmas \ref{lem: hor and ver coord} and \ref{lem: hor and ver}, for $Z\in\mathfrak{X}(M)$, that
\begin{align*}
 [X^\text{h}, Y^\text{h}]Z^v&=(\nabla_X\nabla_YZ-\nabla_Y\nabla_XZ)^v=(\nabla_{[X,Y]}Z+R_\nabla (X,Y)Z)^\upsilon\\
 &=([X,Y]^\text{h}+R_\nabla (X,Y)^\upsilon)Z^v\\
[X^\text{h},\alpha^\text{v}]Z^v&=X^\text{h}\pr^*\alpha(Z)-\pr^*\alpha(\nabla_XZ)=\pr^*(\nabla_X\alpha)(Z)=(\nabla_X\alpha)^\text{v}Z^v\\
[\alpha^\text{v}, \beta^\text{v}]Z^v&=\alpha^\text{v}\pr^*\beta(Z)-\beta^\text{v}\pr^*\alpha(Z)=0.
\end{align*}
\end{proof}

\subsection{Complete lifts of vector fields}In addition to the vertical and horizontal lifts, one can consider also the \textbf{complete lift}. It is the map $(\,\,)^c\colon \mathfrak{X}(M)\rightarrow\mathfrak{X}(T^*M)$ determined by the relation
\begin{equation*}
 X^cY^v\coloneqq [X,Y]^v.
\end{equation*}

\begin{lemma}\label{lem: c-lift pol}
 For $X\in\mathfrak{X}(M)$ and $\mathcal{X}\in\Gamma(\Sym^rTM)$, we have
 \begin{equation*}
 X^c\mathcal{X}^v=(L_X\mathcal{X})^v.
 \end{equation*}
\end{lemma}

\begin{proof}
 It follows directly from the definition of $X^c$ and the Leibniz rule.
\end{proof}

Alternatively, the complete lift of $X\in\mathfrak{X}(M)$ can be defined (up to sign) as the Hamiltonian vector field of the vertical lift of $X$:
\begin{equation}\label{eq:complete-is-hamiltonian}
 X^c=-\omega^{-1}_\text{can}(\dif X^v)=-\ham X^v.
\end{equation}

Let us recall two important properties of the complete lift.

\begin{proposition}[\cite{Patlift}]\label{prop-determined-by-complete}
 A tensor field that is a section of $\otimes^r T^*M\otimes TM$ or $\otimes^rT^*M$ is fully determined by its values on complete lifts of vector fields on $M$.
\end{proposition}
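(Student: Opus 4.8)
The plan is to reduce the statement to a pointwise spanning property of complete lifts and then globalise it by density and continuity. Writing $\eta := \tau-\tau'$ for two tensor fields on $T^*M$ that agree on every tuple of complete lifts, it suffices to show $\eta_\zeta = 0$ at each $\zeta\in T^*M$. Since $\eta_\zeta$ is multilinear and the complete lift $X\mapsto X^\text{c}$ is $\R$-linear in $X$, this will follow as soon as every tangent vector at $\zeta$ is realised as $(X^\text{c})_\zeta$ for some $X\in\mathfrak{X}(M)$: then each slot of $\eta_\zeta$ can be filled by a complete lift, and $\eta(X_1^\text{c}\varlist X_r^\text{c})=0$ forces $\eta_\zeta=0$.

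For the spanning step I would use the decomposition $T(T^*M)=\mathcal{H}_\nabla\oplus\mathcal{V}$ together with the identity $X^\text{c}=X^\text{h}-(\nabla X)^\upsilon$ from Lemma \ref{lem: hor and ver coord}. Fix $\zeta$ with $m:=\pr(\zeta)$ and assume $\zeta\neq 0$. Choosing $X$ with prescribed value $X_m$ and $(\nabla X)_m=0$ gives $(X^\text{c})_\zeta=(X^\text{h})_\zeta$, which ranges over all of $\mathcal{H}_{\nabla,\zeta}\cong T_mM$, so the horizontal directions are covered. For the vertical directions I would instead take $X$ with $X_m=0$ and $(\nabla X)_m=A$ an arbitrary endomorphism of $T_mM$; then $(X^\text{c})_\zeta=-A^\upsilon_\zeta$, and by the coordinate description of the vertical lift of an endomorphism in Lemma \ref{lem: PW and End} this corresponds to the covector $Y\mapsto\zeta(AY)$. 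As $A$ varies over $\en(T_mM)$ and $\zeta\neq 0$, this covector sweeps out all of $T^*_mM\cong\mathcal{V}_\zeta$ (given $\zeta\neq0$, picking $u$ with $\zeta(u)=1$ and $AY:=q(Y)u$ realises any target covector $q$). Hence $\{(X^\text{c})_\zeta\}$ contains both summands and equals $T_\zeta(T^*M)$.

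Granting this, $\eta_\zeta=0$ for every $\zeta$ off the zero section, which is an open dense subset of $T^*M$ since its complement is a closed submanifold of positive codimension (here $\dim M\geq 1$ is used). As $\eta$ is smooth, hence continuous, vanishing on a dense set gives $\eta\equiv0$ and thus $\tau=\tau'$. The one genuine obstacle is the zero section itself: there the fibre coordinate $p$ vanishes, the vertical part $(\nabla X)^\upsilon$ of every complete lift collapses, and the complete lifts span only the horizontal distribution $\mathcal{H}_\nabla$, of dimension $\dim M<\dim T^*M$. Circumventing this degeneracy is precisely what the density-and-continuity passage achieves, and it is the only point where the argument is not purely pointwise.
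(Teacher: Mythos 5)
Your proof is correct. The paper itself gives no proof of this statement (it is quoted from \cite{Patlift}), but your argument is the standard one and is sound: since $X_m$ and $(\nabla X)_m$ can be prescribed independently, the identity $X^{\text{c}}=X^{\text{h}}-(\nabla X)^\upsilon$ shows that $\{(X^{\text{c}})_\zeta\}$ fills $\mathcal{H}_{\nabla,\zeta}\oplus\mathcal{V}_\zeta$ whenever $\zeta\neq 0$ (as $\zeta\circ A$ sweeps $T^*_mM$ for $\zeta\neq0$), and the degeneracy along the zero section is correctly absorbed by density of its complement together with continuity of the tensor field.
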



\begin{proposition}[\cite{Patlift}]\label{prop: c-lift Lie}
 For every $X,Y\in\mathfrak{X}(M)$, we have $[X^c,Y^c]=[X,Y]^c$.
\end{proposition}

\begin{remark}
In natural local coordinates, for $X\in\mathfrak{X}(M)$,
 \begin{align*}
 \rest{X}{U}&=X^i\partial_{x^i}, & \rest{X^c}{T^*U}&=(\pr^*X^i)\partial_{x^i}-p_i\Big(\pr^*\frac{\partial X^i}{\partial x^j}\Big)\partial_{p_j}.
 \end{align*}
\end{remark}

If a connection is given, we can express the complete lift in terms of the horizontal and vertical lifts.

\begin{lemma}\label{lem: hor and ver c-lift}
 Let $\nabla$ be a connection on $M$. For $X\in\mathfrak{X}(M)$, we have
 \begin{equation*}
 X^c=X^\emph{h}-(\nabla X)^\upsilon.
 \end{equation*}
\end{lemma}

\begin{proof}
 It follows by a straightforward calculation in natural coordinates on $T^*M$.
\end{proof}

\subsection{Horizontal lifts of bivector fields}
We finish by giving a coordinate-free description of the lift of a bivector field $\pi\in\mathfrak{X}^2(M)$ introduced in \cite{conformal-PW}.

\begin{definition}
 Let $\nabla$ be a connection on $M$. The \textbf{horizontal lift of a bivector field} is the map
 \begin{align*}
 (\,)^h\colon \,\mathfrak{X}^2(M)&\rightarrow\mathfrak{X}(T^*M)\\
 \pi&\mapsto\pi^h
 \end{align*}
 uniquely determined, for $f\in\cCi(M)$ and $X,Y\in\mathfrak{X}(M)$, by the properties
 \begin{align*}
 &(f\pi)^h=(\pr^*f)\pi^h, & &(X\wedge Y)^h=Y^vX^\text{h}-X^vY^\text{h}.
 \end{align*}
\end{definition}

It is clear right from the definition that the horizontal lift of a bivector field is indeed horizontal. We give an equivalent characterization of $\pi^h$ using the Patterson-Walker metric (Definition \ref{def:PW-metric}).

\begin{proposition}\label{prop: pi-lift}
 Let $\nabla$ be a connection on $M$. The horizontal lift of a bivector field $\pi\in\mathfrak{X}^2(M)$ is the unique horizontal vector field $\pi^h$ on $T^*M$ such that, for every $\alpha\in\Omega^1(M)$, we have
 \begin{equation}\label{eq: pi-lift-global}
 g\nb(\pi^h,\alpha^\emph{v})=\pi(\alpha)^v.
 \end{equation}
\end{proposition}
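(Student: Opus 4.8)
The plan is to establish the two halves of the statement separately: first that a horizontal vector field on $T^*M$ is uniquely determined by its $g_\nabla$-pairings against all vertical lifts $\alpha^\text{v}$, and then that the horizontal lift $\pi^h$ of the preceding definition does satisfy \eqref{eq: pi-lift-global}, which together with uniqueness identifies it as the asserted vector field.

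For the uniqueness half, I would recall that the vertical subbundle $\mathcal{V}$ is isotropic for $g_\nabla$ (as used in Lemma \ref{lem: PW and End}) and, having half the rank of $T(T^*M)$, is maximal isotropic; hence its $g_\nabla$-orthogonal complement is $\mathcal{V}$ itself. Since $\mathcal{V}\cong\pr^*T^*M$, the vertical lifts $\alpha^\text{v}$ with $\alpha\in\Omega^1(M)$ generate $\Gamma(\mathcal{V})$ locally over $\cCi(T^*M)$, and $g_\nabla(W,-)$ is $\cCi(T^*M)$-linear. Thus if a horizontal field $W$ satisfies $g_\nabla(W,\alpha^\text{v})=0$ for all $\alpha$, then $W$ is $g_\nabla$-orthogonal to all of $\mathcal{V}$, so $W\in\mathcal{V}$; combined with $W\in\mathcal{H}_\nabla$ and $\mathcal{H}_\nabla\cap\mathcal{V}=0$, this forces $W=0$. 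Applying this to the difference of two horizontal fields with the same pairings yields uniqueness.

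For the existence half, I would observe that both sides of \eqref{eq: pi-lift-global} are additive and $\cCi(M)$-linear in $\pi$: on the left because $(f\pi)^h=(\pr^*f)\pi^h$ and $g_\nabla$ is $\cCi(T^*M)$-bilinear, on the right because $(f\pi)(\alpha)=f\,\pi(\alpha)$ and the vertical lift obeys $(fZ)^v=(\pr^*f)Z^v$. Since bivector fields are locally generated by decomposables, it then suffices to verify the identity for $\pi=X\wedge Y$. Here $\pi^h=Y^vX^\text{h}-X^vY^\text{h}$, and because the vertical lifts $X^v,Y^v$ are \emph{functions} on $T^*M$ they may be extracted from $g_\nabla$; using $g_\nabla(X^\text{h},\alpha^\text{v})=\pr^*(\alpha(X))$ from Definition \ref{def:PW-metric} one gets $g_\nabla(\pi^h,\alpha^\text{v})=\pr^*(\alpha(X))\,Y^v-\pr^*(\alpha(Y))\,X^v$. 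This is compared with $(\pi(\alpha))^v=(\iota_\alpha(X\wedge Y))^v=(\alpha(X)Y-\alpha(Y)X)^v$, which equals the same expression by $\cCi(M)$-linearity of the vertical lift.

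The computation itself is short; the only points requiring care are the bookkeeping of the contraction convention $\pi(\alpha)=\iota_\alpha\pi$ together with its sign, and the observation — really the crux of the argument — that vertical lifts of vector fields are scalar functions rather than vector fields, which is what licenses pulling $X^v$ and $Y^v$ out of the metric. The mild subtlety in the uniqueness half is the maximal-isotropy argument, which holds for an arbitrary connection and does not require $\mathcal{H}_\nabla$ to be isotropic.
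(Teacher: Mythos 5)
Your proposal is correct and follows essentially the same route as the paper: reduce to decomposable bivectors by $\cCi(M)$-linearity, compute $g_\nabla((X\wedge Y)^h,\alpha^{\mathrm{v}})$ by pulling the scalar functions $X^v, Y^v$ out of the metric, and get uniqueness from the isotropy structure of $T(T^*M)=\mathcal{H}_\nabla\oplus\mathcal{V}$. The only cosmetic difference is in the uniqueness step, where you invoke maximal isotropy of $\mathcal{V}$ (so $\mathcal{V}^{\perp}=\mathcal{V}$) while the paper uses non-degeneracy of $g_\nabla$ together with isotropy of $\mathcal{H}_\nabla$; both are valid one-line arguments.
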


\begin{proof}
 For every $f\in\cCi(M)$, we have
 \begin{equation*}
 (f\pi)(\alpha)^v=(f\pi(\alpha))^v=(\pr^*f)\pi(\alpha)^v,
 \end{equation*}
 so it is enough to check that \eqref{eq: pi-lift-global} is satisfied on decomposable elements of $\mathfrak{X}^2(M)$. Indeed, for $X,Y\in\mathfrak{X}(M)$, we get
 \begin{align*}
 g\nb((X\wedge Y)^h,\alpha^\text{v})&=Y^vg\nb(X^\text{h},\alpha^\text{v})-X^vg\nb(Y^\text{h},\alpha^\text{v})=Y^v\pr^*\alpha(X)-X^v\pr^*\alpha(Y)\\
 &=(\alpha(X)Y-\alpha(Y)X)^v=(X\wedge Y)(\alpha)^v.
 \end{align*}
 The condition \eqref{eq: pi-lift-global} uniquely determines $\pi^h$ since the Patterson-Walker metric is non-degenerate and the horizontal subbundle is isotropic for $g\nb$. 
\end{proof}

\begin{remark}\label{rem:coord-hor-bivector}
 By a straightforward calculation, one finds that, in natural coordinates, we have
 \begin{align*}
 &\rest{\pi}{U}=\frac{1}{2}\pi^{ij}\partial_{x^i}\wedge\partial_{x^j}, & &\rest{\pi^h}{T^*U}=p_i(\pr^*\pi^{ij})\partial_{x^j}^\text{h}=p_i(\pr^*\pi^{ij})(\partial_{x^j}+p_l(\pr^*\Gamma^l_{kj})\partial_{p_k}).
 \end{align*}
This recovers the original definition in \cite{conformal-PW}. From the coordinate expression, we can also see that the vector field $\pi^h$ is the \textit{geodesic spray of the contravariant connection} $\nabla^\pi$ given, for $\alpha\in\Omega^1(M)$, by $\nabla^\pi_\alpha\coloneqq \nabla_{\pi(\alpha)}$, as defined in \cite{CraLPG}.
\end{remark}

We establish several properties that we need for the proof of Theorem \ref{thm: PW-Lie}.

\begin{lemma}\label{lem: pi pol}
 Let $\nabla$ be a connection on $M$. For $\pi\in\mathfrak{X}^2(M)$, $f\in\cCi(M)$, $X\in\mathfrak{X}(M)$, and $\vartheta\in\Gamma(\Sym^2TM)$, we have
 \begin{align*}
 \pi^h\pr^*f&=-\pi(\dif f)^v, & \pi^hX^v&=(\nabla X\circ \pi-\pi\circ(\nabla X)^t)^v, & \pi^h\vartheta^v=(\nabla_{\pi(\,)}\vartheta+\cyc)^v.
 \end{align*}
\end{lemma}

\begin{proof}
 By Lemmas \ref{lem: hor and ver coord} and \ref{lem: hor and ver}, we obtain the following in natural coordinates:
 \begin{align*}
 \rest{(\pi^h\pr^*f)}{T^*U}&=p_i(\pr^*\pi^{ij})\partial_{x_j}^\text{h}\pr^*f=p_i(\pr^*\pi^{ij})\Big(\pr^*\frac{\partial f}{\partial x^j}\Big)=\rest{(-\pi(\dif f))^v}{T^*U},\\
 \rest{(\pi^hX^v)}{T^*U}&=p_i(\pr^*\pi^{ij})\partial_{x_j}^\text{h}X^v=p_i(\pr^*\pi^{ij})(\nabla_{\partial_{x^j}}X)^v=p_ip_k\pr^*(\pi^{ij}(\nabla X)^k_j)\\
 &=\frac{1}{2}p_ip_k\pr^*((\nabla X)^i_j\pi^{kj}-\pi^{ji}(\nabla X)^k_j)
 \end{align*}
 where $\rest{\pi}{U}=\frac{1}{2}\pi^{ij}\partial_{x^i}\wedge\partial_{x^j}$ and $\rest{X}{U}=X^i\partial_{x^i}$. Therefore, $\pi^h\pr^*f=-\pi(\dif f)^v$, and $\pi^hX^v=\kappa^v$ for $\kappa\in\Gamma(\Sym^2TM)$ given by
 \begin{equation*}
 \kappa(\alpha,\beta)\coloneqq \beta(\nabla_{\pi(\alpha)}X)+\alpha(\nabla_{\pi(\beta)}\alpha)=(\nabla X\circ\pi-\pi\circ(\nabla X)^t)(\alpha,\beta).
 \end{equation*}

 Using the Leibniz rule and Lemmas \ref{lem: hor and ver coord} and \ref{lem: hor and ver}, we find that
 \begin{align*}
 \rest{(\pi^h\vartheta^v)}{T^*U}&=p_k(\pr^*\pi^{kl})\partial_{x^l}^\text{h}\Big(\frac{1}{2}p_ip_j(\pr^*\vartheta^{ij})\Big)\\
 &=\frac{1}{2}p_ip_jp_k\pr^*\Big(\pi^{il}\Big(\frac{\partial \vartheta^{jk}}{\partial x^l}+\vartheta^{qk}(\nabla_{\partial_{x^l}}\partial_{x^q})^j+\vartheta^{jq}(\nabla_{\partial_{x^l}}\partial_{x^q})^k\Big)\Big),
 \end{align*}
 hence $\pi^h\vartheta^v=\mathcal{X}^v$ for $\mathcal{X}\in\Gamma(\Sym^3TM)$ given by 
 \begin{equation*}
 \mathcal{X}(\alpha,\beta,\eta)\coloneqq(\nabla_{\pi(\alpha)}\vartheta)(\beta,\eta)+\cyc(\alpha,\beta,\eta).
 \end{equation*}
\end{proof}

\newgeometry{left=1.85cm, right=1.85cm, bottom=2cm}
\section{Comparison with classical Cartan calculus}\label{sec:table}
We show in this table a comparison between classical and symmetric Cartan calculus.
\begin{center}
\SetTblrInner{rowsep=7pt}
\begin{tblr}{width=\linewidth, colspec={|X[c]||X[c]|}, cell{2}{1}={c=2}{c}, cell{7}{1}={c=2}{c},cell{15}{1}={c=2}{c}, cell{18}{1}={c=2}{c}}
\hline
\textbf{Classical Cartan calculus} & \textbf{Symmetric Cartan calculus}\\
\hline\hline
algebraic features\\
\hline
graded-commutative algebra $(\Omega^\bullet(M),\wedge)$ & commutative graded algebra $(\Upsilon^\bullet(M),\odot)$\\
 graded derivations $\gder(\Omega^\bullet (M))$ & derivations $\der(\Upsilon^\bullet (M))$\\
graded commutator $[\,\,,\,\,]_\text{g}$ & commutator $[\,\,,\,\,]$\\
$[\iota_X,\iota_Y]_\text{g}=0$ & $[\iota_X,\iota_Y]=0$\\
\hline
\hline
differentials&\\
\hline
exterior derivative $\dif$ & symmetric derivative $\nabla^s$\\
$\dif\psi=(r+1)\,\mathrm{skew}\,(\nabla\psi)$ & $\nabla^s\varphi= (r+1)\,\mathrm{sym}\,(\nabla\varphi)$\\
canonical & depending on the choice of $\nabla$\\
$(\dif f)(X)=Xf$ & $(\nabla^s f)(X)=Xf$\\
$\dif\in\gder_1(\Omega^\bullet(M))$ & $\nabla^s\in\der_1(\Upsilon^\bullet(M))$\\
$[\dif,\dif]_\text{g}=2\,\dif\circ\dif=0$\vspace{5pt}\break (non-trivial condition)& $[\nabla^s,\nabla^s]=0$\vspace{5pt}\break (trivially satisfied)\\
$\ker\dif=\Omega^\bullet_\text{closed}(M)$ (closed forms)& $\ker\nabla^s=\kil^\bullet(M,\nabla)$ (Killing tensors)\\
\hline
\hline
Lie derivatives&\\
\hline
Lie derivative $L_X\coloneqq [\iota_X,\dif]_\text{g}$ & symmetric Lie derivative $L^s_X\coloneqq [\iota_X,\nabla^s]$\\ 
$\displaystyle (L_X\psi)_m=\rest{\frac{\dif}{\dif t}}{t=0}(\Psi^X_{t})_m^*\psi_{\Psi^X_t(m)}$ & $\displaystyle (L^s_X\varphi)_m=\rest{\frac{\dif}{\dif t}}{t=0}P^\gamma_{2t,0}(\Psi^X_{-t})_{\Psi^X_{2t}(m)}^*\varphi_{\Psi^X_t(m)}$\\
\hline
\hline
brackets&\\
\hline
Lie bracket $[X,Y]\coloneqq X\circ Y-Y\circ X$ & symmetric bracket $[X,Y]_s\coloneqq \nabla_XY+\nabla_YX$\\
$\iota_{[X,Y]}=[L_X,\iota_Y]_\text{g}$,\,\,\, \,\,$[X,Y]=L_XY$ & $\iota_{[X,Y]_s}=[L^s_X, \iota_Y]$,\,\,\,\,\,$[X,Y]_s=L^s_XY$ \\
foliations & geodesically invariant distributions\\
\hline
\end{tblr}
\end{center}

\begin{center}
\SetTblrInner{rowsep=7.5pt}
\begin{tblr}{width=\linewidth, colspec={|X[c]||X[c]|}, cell{2}{1}={c=2}{c}, cell{5}{1}={c=2}{c}, cell{14}{1}={c=2}{c}, cell{16}{1}={c=2}{c}}
\hline
\textbf{Classical Cartan calculus} & \textbf{Symmetric Cartan calculus}\\
\hline\hline
(graded-)commutation relations\\
\hline
$[L_X,L_Y]_\text{g}=L_{[X,Y]}$ & $[L^s_X, L^s_Y]=L^s_{[X,Y]}+\iota^s_{2(\nabla_{\nabla X}Y-\nabla_{\nabla Y}X-R_\nabla (X,Y))}$\\
$[\dif,L_X]_\text{g}=0$ & $[\nabla^s,L^s_X]=2\nabla^s_{\nabla X}+\iota^s_{2\sym \iota_XR_\nabla +R^s_\nabla X}$\\
\hline\hline
isomorphisms\\
\hline
diffeomorphisms& affine diffeomorphisms\\
\hline
\hline
induced geometry on the cotangent bundle&\\
\hline
symplectic geometry & geometry of split signature metrics\\
canonical symplectic form $\omega_\text{can}$ & Patterson-Walker metric $g\nb$\\
$\rest{\omega_\text{can}}{T^*U}=\dif p_j\wedge\dif x^j$ & $\rest{g\nb}{T^*U}=\dif p_j\odot\dif x^j-p_k(\pr^*\Gamma^k_{ij})\dif x^i\odot\dif x^j$\\
$\omega_\text{can}=\dif\alpha_{\text{can}}$ &$g\nb=\hat{\nabla}^s\alpha_\text{can}$\\
$\omega_\text{can}(X^c,Y^c)=-[X,Y]^v$ &$g\nb(X^c,Y^c)=-\pg{X,Y}^v_s$\\
$\omega_\text{can}(\phi\nb a,\phi\nb b)=\pr^*\la a,b\ra_-$& $g\nb(\phi\nb a,\phi\nb b)=\pr^*\la a, b\ra_+$\\
\hline
\hline
simplified (graded-)commutation relations\\
\hline
$[L_X,L_Y]_\text{g}=L_{[X,Y]}$,\,\,\, \,\,$[\dif,L_X]_\text{g}=0$\break $\Leftrightarrow$\break $X^c$ is Hamiltonian for $\omega_\text{can}$\break (trivially satisfied) & $[L^s_X, L^s_Y]=L^s_{[X,Y]_s}$,\,\,\, \,\,$[\nabla^s,L^s_X]=0$\break $\Leftrightarrow$\break $X^c$ is gradient Killing for $g\nb$\break (non-trivial condition)\\
\hline
\end{tblr}
\end{center}


\clearpage
\bibliographystyle{alpha}\bibliography{refs}

\newcommand{\etalchar}[1]{$^{#1}$}
\begin{thebibliography}{HS{\v S}{\etalchar{+}}19}

\bibitem[AX]{AleDBCA}
A.~Alekseev and P.~Xu.
\newblock Derived brackets and {C}ourant algebroids.
\newblock \url{https://web.archive.org/web/20160705150153/http://www.math.psu.edu/ping/anton-final.pdf}.
\newblock Unpublished notes.

\bibitem[Ben16]{BenSRM}
Sergio Benenti.
\newblock Separability in {R}iemannian manifolds.
\newblock {\em SIGMA Symmetry Integrability Geom. Methods Appl.}, 12:Paper No. 013, 21, 2016.

\bibitem[BL05]{Lewbook}
Francesco Bullo and Andrew~D. Lewis.
\newblock {\em Geometric control of mechanical systems}, volume~49 of {\em Texts in Applied Mathematics}.
\newblock Springer-Verlag, New York, 2005.
\newblock Modeling, analysis, and design for simple mechanical control systems.

\bibitem[BLnL12]{LewGISP}
Mar\'ia Barbero-Li\~n\'an and Andrew~D. Lewis.
\newblock Geometric interpretations of the symmetric product in affine differential geometry and applications.
\newblock {\em Int. J. Geom. Methods Mod. Phys.}, 9(8):1250073, 33, 2012.

\bibitem[Car68a]{CarA}
B.~Carter.
\newblock Global structure of the {K}err family of gravitational fields.
\newblock {\em Physical Review}, \textbf{174}(5):1559--1571, 1968.

\bibitem[Car68b]{CarB}
Brandon Carter.
\newblock Hamilton-{J}acobi and {S}chr\"odinger separable solutions of {E}instein's equations.
\newblock {\em Comm. Math. Phys.}, 10:280--310, 1968.

\bibitem[CFM21]{CraLPG}
Marius Crainic, Rui~Loja Fernandes, and Ioan Mărcuț.
\newblock {\em Lectures on {P}oisson geometry}, volume 217 of {\em Graduate Studies in Mathematics}.
\newblock American Mathematical Society, Providence, RI, 2021.

\bibitem[Cou94]{CouTLA}
Ted Courant.
\newblock Tangent {L}ie algebroids.
\newblock {\em J. Phys. A}, 27(13):4527--4536, 1994.

\bibitem[Cro81]{CroGSST}
P.~E. Crouch.
\newblock Geometric structures in systems theory.
\newblock {\em Proc. IEE-D}, 128(5):242--252, 1981.

\bibitem[DM18]{dunajski-mettler}
Maciej Dunajski and Thomas Mettler.
\newblock Gauge theory on projective surfaces and anti-self-dual {E}instein metrics in dimension four.
\newblock {\em J. Geom. Anal.}, 28(3):2780--2811, 2018.

\bibitem[Eas05]{EasHSL}
Michael Eastwood.
\newblock Higher symmetries of the {L}aplacian.
\newblock {\em Ann. of Math. (2)}, 161(3):1645--1665, 2005.

\bibitem[Gua04]{GuaGCG}
Marco Gualtieri.
\newblock Generalized complex geometry.
\newblock {\em Oxford University doctoral thesis}, [arXiv:0401221], 2004.

\bibitem[Gua10]{gualtieri-branes}
Marco Gualtieri.
\newblock Branes on {P}oisson varieties.
\newblock In {\em The many facets of geometry}, pages 368--394. Oxford Univ. Press, Oxford, 2010.

\bibitem[HBP06]{HeydSD}
A.~Heydari, N.~Boroojerdian, and E.~Peyghan.
\newblock A description of derivations of the algebra of symmetric tensors.
\newblock {\em Arch. Math. (Brno)}, 42(2):175--184, 2006.

\bibitem[Hit03]{HitGCYM}
Nigel Hitchin.
\newblock Generalized {C}alabi-{Y}au manifolds.
\newblock {\em Q. J. Math.}, 54(3):281--308, 2003.

\bibitem[HS{\v S}{\etalchar{+}}19]{conformal-PW}
Matthias Hammerl, Katja Sagersching, Josef {\v S}ilhan, Arman Taghavi-Chabert, and Vojtěch {\v Z}ádník.
\newblock Conformal {P}atterson-{W}alker metrics.
\newblock {\em Asian J. Math.}, 23(5):703--734, 2019.

\bibitem[KN96]{KoNoFDG}
Shoshichi Kobayashi and Katsumi Nomizu.
\newblock {\em Foundations of differential geometry. {V}ol. {I}}.
\newblock Wiley Classics Library. John Wiley \& Sons, Inc., New York, 1996.
\newblock Reprint of the 1963 original, A Wiley-Interscience Publication.

\bibitem[Kob95]{KobTGDG}
Shoshichi Kobayashi.
\newblock {\em Transformation groups in differential geometry}.
\newblock Classics in Mathematics. Springer-Verlag, Berlin, 1995.
\newblock Reprint of the 1972 edition.

\bibitem[Lew98]{LewACD}
Andrew~D. Lewis.
\newblock Affine connections and distributions with applications to nonholonomic mechanics.
\newblock {\em Rep. Math. Phys.}, 42(1-2):135--164, 1998.
\newblock Pacific Institute of Mathematical Sciences Workshop on Nonholonomic Constraints in Dynamics (Calgary, AB, 1997).

\bibitem[MR25]{SymPoisson}
Filip Moučka and Roberto Rubio.
\newblock Symmetric {P}oisson geometry, totally geodesic foliations and {J}acobi-{J}ordan algebras.
\newblock arXiv:2508.15890, 2025.

\bibitem[PW52]{PatRE}
E.~M. Patterson and A.~G. Walker.
\newblock Riemann extensions.
\newblock {\em Quart. J. Math. Oxford Ser. (2)}, 3:19--28, 1952.

\bibitem[WP70]{WalOQ}
Martin Walker and Roger Penrose.
\newblock On quadratic first integrals of the geodesic equations for type {$\{22\}$} spacetimes.
\newblock {\em Comm. Math. Phys.}, 18:265--274, 1970.

\bibitem[YP67]{Patlift}
K.~Yano and E.~M. Patterson.
\newblock Vertical and complete lifts from a manifold to its cotangent bundle.
\newblock {\em J. Math. Soc. Japan}, 19:91--113, 1967.

\end{thebibliography}

\end{document}